\newtheorem{theorem}{Theorem}[section]
\newtheorem{lemma}[theorem]{Lemma}
\newtheorem{proposition}[theorem]{Proposition}
\newtheorem{corollary}[theorem]{Corollary}
\theoremstyle{definition}
\newtheorem{definition}[theorem]{Definition}
\newtheorem{example}[theorem]{Example}
\newtheorem{construction}[theorem]{Construction}
\newtheorem*{convention}{Convention}
\theoremstyle{remark}
\newtheorem{remark}[theorem]{Remark}
\newtheorem{remarkthm}{Remark}[theorem]
\numberwithin{equation}{section}
\newcommand{\Z}{\mathbb{Z}}
\begin{document}

\title[On the Cohomology of $BPU_{n}$]{On the Cohomology of the Classifying Spaces of Projective Unitary Groups}

\author{Xing Gu}

\address{School of Mathematics and Statistics, The University of Melbourne, Parkville VIC 3010, Australia}
\email{xing.gu@unimelb.edu.au}
\thanks{The author was supported by the Australian research council via grant ARC-DP170102328.}


\subjclass[2010]{55T10, 55R35, 55R40, 57T99}


\keywords{Serre spectral sequences, classifying spaces, projective unitary groups, integral cohomology, primitive elements}

\begin{abstract}
 Let $BPU_{n}$ be the classifying space of $PU_n$, the projective unitary group of order $n$, for $n>1$. We use a Serre spectral sequence to determine the ring structure of $H^{*}(BPU_{n}; \mathbb{Z})$ up to degree $10$, as well as a family of distinguished elements of $H^{2p+2}(BPU_{n}; \mathbb{Z})$, for each prime divisor $p$ of $n$. We also study the primitive elements of $H^*(BU_n;\mathbb{Z})$ as a comodule over $H^*(K(\mathbb{Z},2);\mathbb{Z})$, where the comodule structure is given by an action of $K(\mathbb{Z},2)\simeq BS^1$ on $BU_n$ corresponding to the action of taking the tensor product of a complex line bundle and an $n$ dimensional complex vector bundle.
\end{abstract}

\maketitle
\tableofcontents

\textbf{Acknowledgements}
This paper is a revision of the first chapter of the author's PhD thesis at the University of Illinois at Chicago. The author would like to express his gratitude to his co-advisor Benjamin Antieau who proposed this subject and offered helpful advice, to his thesis advisor Brooke Shipley for her constant encouragement, financial support and helpful advice, and to Benedict Williams, who inspired the author with his work on this subject. In addition, the three persons mentioned above have read several earlier versions of this paper and shared their valuable opinions with the author. The author would also like to thank Masaki Kameko for his hospitality during the author's short visit to Saitama, Japan, and for a helpful conversation in which he presented many ideas related to the computation of the cohomology of $BPU_{n}$. The author is in debt to Aldridge Bousfield for examining earlier versions of this paper, during the course of which he found several problems and proposed solutions. The author would like to thank Diarmuid Crowley, Christian Haesemeyer and Stephane Dartois for helpful conversations. Finally, the author thanks the referee for a very informative report.

\section{Introduction}\label{Introduction}
Let $U_{n}$ be the unitary group of order $n$, and consider the unit circle group $S^1$ of complex numbers as the normal subgroup of scalars of $U_{n}$. The quotient group, denoted hereafter by $PU_{n}$, is called the projective unitary group of order $n$. Its classifying space $BPU_{n}$ is a topological space determined by $PU_{n}$ up to homotopy type, with a canonical base point, characterised by the fact that for a well-behaved topological space $X$ with a base point, the set of pointed homotopy classes of maps, $[X, BPU_{n}]$, has a natural one-to-one correspondence with the isomorphism classes of principal $PU_{n}$ bundles. Since $BPU_n$ is homotopy equivalent to $BPGL_n$, the classifying space of the projective general linear group $PGL_n$, which is the automorphism group of the algebra of $n\times n$ matrices, the set $[X, BPU_{n}]$ also classifies
bundles of $n\times n$ matrix algebras over $X$, known as the topological Azumaya algebras of degree $n$ over $X$.

The space $BPU_n$ plays a fundamental role in the study of the topological period-index problem (Antieau and Williams \cite{An}, \cite{An1}, and Gu \cite{Gu}, \cite{Gu1}), a topological analog of the period-index problem concerning elements of Brauer groups and their representing Azumaya algebras (\cite{Co}, Colliot-Th{\'e}l{\`e}ne, 2002). The space $BPU_n$ also arises in the study of Dai-Freed anamolies in partical physics (Garc{\'i}a-Etxebarria and Montero, \cite{Ga}).

The cohomology rings of $BPU_n$ with coefficients in $\mathbb{Z}/p$ for various primes $p$, as well as its Brown-Peterson cohomology, for $n$ of various forms, have been considered by Kameko (\cite{Ka}, 2008), Kono and Mimura (\cite{Ko}, 1975), Kono, Mimura and Shimada (\cite{Ko1}, 1975), Toda (\cite{To}, 1987), and Vavpeti{\u c} and Viruel (\cite{Va}, 2004).

The subject of this paper is the integral cohomology $H^{*}(BPU_{n};\mathbb{Z})$. This has been studied for $n=p$ a prime number: When $p=2$ we have $PU_2\simeq SO_3$ and it follows from \cite{Br} (Brown, 1982). When $p$ is odd, it is studied by Vistoli (\cite{Vi}, 2009). For an arbitrary integer $n$, we know $H^{k}(BPU_{n};\mathbb{Z})$ for $k\leq 5$ (\cite{An}, Antieau and Williams, 2014). To the author's knowledge the torsions of $H^*(BPU_{n};\mathbb{Z})$ for an arbitrary $n$ has not been considered before.

Our first theorem is as follows. The notations in the statement are made clear in the sequel.

\begin{theorem}\label{maincalc}
 For an integer $n>1$,  $H^{*}(BPU_n;\mathbb{Z})$ in degrees $\leq 10$ is isomorphic to the following graded ring:
 $$\mathbb{Z}[e_2, \cdots, e_{j_n}, x_1, y_{3,0}, y_{2,1}]/I_n.$$
 Here $e_i$ is of degree  $2i$, $j_n=\textrm{min}\{5, n\}$. The degrees of $x_1, y_{3,0}, y_{2,1}$ are $3, 8, 10$, respectively. The ideal $I_n$ is generated by
 \[nx_1,\quad \epsilon_2(n)x_1^2,\quad \epsilon_3(n)y_{3,0},\quad \epsilon_2(n)y_{2,1},\]
 \[\delta(n)e_{2}x_1,\quad (\delta(n)-1)(y_{2,1}-e_{2}x_1^2),\quad e_{3}x_1,\]
 where $\epsilon_p(n)=\operatorname{gcd}(p,n)$, and
 \begin{equation*}
 \delta(n)=
 \begin{cases}
  2, \textrm{ if }n=4l+2\textrm{ for some integer }l,\\
  1, \textrm{ otherwise}.
 \end{cases}
 \end{equation*}
\end{theorem}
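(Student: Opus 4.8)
The plan is to run the Leray--Serre spectral sequence of the fibration $\mathbf{B}U_n\xrightarrow{P}\mathbf{B}PU_n\xrightarrow{\chi}K(\mathbb{Z},3)$ associated with the central extension $1\to S^1\to U_n\to PU_n\to 1$ (the spectral sequence built in the preceding sections). Because $H^*(\mathbf{B}U_n;\mathbb{Z})=\mathbb{Z}[c_1,\dots,c_n]$ is torsion-free, the $E_2$-page splits as $E_2^{p,q}=H^p(K(\mathbb{Z},3);\mathbb{Z})\otimes_{\mathbb{Z}}H^q(\mathbf{B}U_n;\mathbb{Z})$; in total degree $\le 10$ the base factor contributes only $\mathbb{Z}$ in degree $0$, $\mathbb{Z}\langle\iota_3\rangle$ in degree $3$, a $\mathbb{Z}/2$ in each of degrees $6$, $9$, $10$, and a $\mathbb{Z}/3$ in degree $8$, while the fibre factor contributes only $c_1,\dots,c_5$ and their monomials. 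Since the fibre cohomology is concentrated in even degrees, $d_r=0$ for all even $r$, and only $d_3$, $d_5$, $d_7$, $d_9$ can be nonzero.

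The heart of the argument is the computation of these differentials. The decisive transgression is $d_3(c_1)=\pm n\iota_3$, which follows from the identification of the connecting map $\pi_3K(\mathbb{Z},3)\to\pi_2\mathbf{B}U_n$ with multiplication by $n$ --- equivalently, from the fact that $S^1\hookrightarrow U_n\xrightarrow{\det}U_1$ is the $n$-th power map. As $d_3$ is a derivation and the pullbacks $P^*(e_i)$ are, by construction, permanent cycles expressed as explicit integral polynomials in the $c_j$ (this is the description of the torsion-free part referred to in the introduction), $d_3$ is thereby determined on all of $\mathbb{Z}[c_1,\dots,c_n]$; it produces all the $n$- and $\gcd(\cdot,n)$-torsion. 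Taking $d_3$-cohomology bidegree by bidegree for $p+q\le 10$ is a sequence of integer linear-algebra (Smith normal form) problems whose matrix entries depend on $n$, and it is precisely here that the congruence of $n$ modulo $4$ --- hence $\delta(n)$ --- appears, through the $2$-divisibility of entries such as $(n-2)/2$ governing bidegree $(3,4)$, i.e.\ total degree $7$. The remaining higher differentials $d_5$, $d_7$, $d_9$ in this range are few and are pinned down by degree reasons, by the already-known cohomology $H^{k}(\mathbf{B}PU_n;\mathbb{Z})$ for $k\le 5$, and by reduction mod $p$ together with Steenrod operations; this determines $E_\infty^{p,q}$ for all $p+q\le 10$.

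Finally I would assemble the graded ring. Resolving the extension problems in each total degree --- most are split for order reasons, but those in total degrees $7$ and $10$ (including a hidden multiplicative extension forcing $y_{2,1}=e_2x_1^2$ when $\delta(n)=2$) need the finer information extracted above --- gives the underlying abelian groups. The multiplicative generators are then: the $e_i$ for $2\le i\le j_n$, torsion-free with $P^*(e_i)$ the permanent Chern polynomials; $x_1=\chi^*\iota_3$ of filtration $3$; $y_{3,0}$ and $y_{2,1}$, obtained from the torsion generators of $H^8$ and $H^{10}$ of $K(\mathbb{Z},3)$; and the exotic classes $z_1$ (degree $9$) and $z_2$ (degree $10$) of positive filtration not detected by the others. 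The relations come from three sources: the orders in $E_\infty$ and the extensions ($nx_1=0$ from $d_3(c_1)=n\iota_3$, and $\gcd(2,n)x_1^2=0$, $\gcd(3,n)y_{3,0}=0$, $\gcd(3,n)z_1=0$, $\gcd(3,n)z_2=0$, which follow from graded-commutativity, from $nx_1=0$, and from $y_{3,0}$, $z_1$, $z_2$ already having order $3$ in $E_\infty$); the multiplicative structure of $E_\infty$, giving $e_3x_1=0$ and, with the bidegree-$(3,4)$ analysis, $\delta(n)e_2x_1=0$ and $(\delta(n)-1)(y_{2,1}-e_2x_1^2)=0$; and a degree-by-degree order count showing the evident surjection from the displayed quotient ring onto $H^{\le 10}(\mathbf{B}PU_n;\mathbb{Z})$ is an isomorphism. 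I expect the main obstacles to be: carrying out the $n$-dependent Smith-normal-form analysis of $d_3$ (which presupposes the precise formulas for $P^*(e_i)$) and extracting $\delta(n)$ from it; nailing down the few higher differentials; and proving that $z_1$ and $z_2$ are genuinely indecomposable, not polynomials in the $e_i$, $x_1$ and the $y$'s --- a non-vanishing statement I would settle, since these classes are $3$-torsion, via a $\mathcal{P}^1$-computation in $H^*(\mathbf{B}PU_n;\mathbb{Z}/3)$.
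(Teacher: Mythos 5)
Your skeleton matches the paper's: the same spectral sequence $^{U}E_*^{*,*}$, the transgression $d_3(c_1)=nx_1$ as the decisive input, a Smith-normal-form analysis of the differentials carried out prime by prime, $\delta(n)$ emerging from the $2$-divisibility of an entry like $(n-2)/2$ in bidegree $(3,4)$, and extension problems concentrated in total degrees $7$, $9$ and $10$. But two steps in your mechanism do not go through as stated. First, the Leibniz rule together with $d_3(c_1)=n\iota_3$ does \emph{not} determine $d_3$ on the indecomposables $c_2,\dots,c_5$, and your fallback --- that the $P^*(e_i)$ are ``by construction'' permanent cycles given by explicit integral polynomials in the $c_j$ --- is circular: those polynomials are exactly what the computation of $\ker d_3$ is supposed to produce, and Remark \ref{torsion free} stresses that they are not known a priori. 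The paper closes this gap by pulling back to the maximal torus: the splitting principle and the comparison with $K(\mathbb{Z},2)\to *\to K(\mathbb{Z},3)$ give $d_3=\nabla(-)\cdot x_1$ with $\nabla=\sum_i\partial/\partial v_i$, hence $d_3(c_k)=(n-k+1)c_{k-1}x_1$ (Proposition \ref{diff2}, Corollary \ref{diff4}). Without this, or an equivalent computation, the matrices you propose to reduce have undetermined entries and $\delta(n)$ cannot be extracted.

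Second, the higher differentials are not ``pinned down by degree reasons'' plus known low-degree cohomology. The values actually needed, e.g.\ $d_5^{3,4}(c_2x_1)=\tfrac{n(n-1)}{2}y_{3,0}$ and the $d_7$ on total degree $9$, come in the paper from the isomorphism $^{T}E_r^{*,*}\cong{}^{P}E_r^{*,*}\otimes{}^{K}E_r^{*,*}$ together with the chain-level model of $K(\mathbb{Z},3)$ built in Sections 2--3, which is why that machinery exists. Moreover $d_9^{0,8}\colon{}^{U}E_9^{0,8}\to{}^{U}E_9^{9,0}=\mathbb{Z}/2\{x_1^3\}$ and the multiplicative extension $y_{2,1}=e_2x_1^2$ for $n\equiv 2\pmod 4$ lie outside the reach of the comparison theorem (Theorem \ref{main}); they require the geometric inputs $x_1^3\neq 0$ and $e_2x_1^2\neq 0$, which the paper obtains by restricting along the block-diagonal $\mathbf{B}PU_2\to\mathbf{B}PU_n$ and using $PU_2\cong SO_3$ with Brown's computation of $H^*(\mathbf{B}SO_3;\mathbb{Z})$ (Proposition \ref{BSO_3}, Lemma \ref{k=9lemma1}). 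Your proposal contains no substitute for this step; mod $2$ reduction and Steenrod operations alone will not decide it. (By contrast, your worry about the indecomposability of $z_1$, $z_2$ dissolves in the paper's framework: they are simply the generators of $E_\infty^{3,6}$ and $E_\infty^{8,2}$, namely $c_3x_1$ and the class detected by $c_1y_{3,0}$, and the vanishing $e_3x_1=0$ already shows they are not accounted for by the listed products, so no $\mathcal{P}^1$ computation is needed.)
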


We outline the strategy of the proof of Theorem \ref{maincalc} as follows. Consider the short exact sequence of Lie groups
\[1 \rightarrow S^1 \rightarrow U_{n} \rightarrow PU_{n} \rightarrow 1.\]
Applying the classifying space functor $B$, we obtain the following fiber sequence
\[BS^1 \rightarrow BU_{n} \rightarrow BPU_{n}.\]
Notice that $BS^1$ is the Eilenberg-Mac Lane space $K(\mathbb{Z},2)$. Therefore we have the following fiber sequence:
\begin{equation}\label{EMfibration}
U:BU_{n} \rightarrow BPU_{n}\xrightarrow{\chi} K(\mathbb{Z},3).
\end{equation}
Let $^{U}E^{*,*}_*$ be the integral cohomological Serre spectral sequence induced by (\ref{EMfibration}). This is the main object of interest in this paper.

Similarly, let $T^{n}$ and $PT^{n}$ be the maximal tori of $U_{n}$ and $PU_{n}$ respectively, and we have a fiber sequence
\begin{equation}\label{Torusfibration}
T:BT^n \rightarrow BPT^n \rightarrow K(\mathbb{Z},3).
\end{equation}
Let $^{T}E^{*,*}_{*}$ be the integral cohomological Serre spectral sequence associated to it.

Finally we let $^{K}E^{*,*}_*$ be the integral cohomological Serre spectral sequence associated to the path fibration
\begin{equation*}
K:K(\mathbb{Z},2) \rightarrow PK(\mathbb{Z},3) \rightarrow K(\mathbb{Z},3),
\end{equation*}
where $PK(\mathbb{Z},3)$ is the (contractible) path space (with one end fixed) of $K(\mathbb{Z},3)$. We compare $^{U}E^{*,*}_*$ with the other two spectral sequences via homological algebra of differential graded algebras and the theory of Chern classes, in particular the splitting principle.

As we shall see in Section \ref{1<k<6}, the map $BPU_{n}\xrightarrow{\chi} K(\mathbb{Z},3)$ in the fiber sequence (\ref{EMfibration}) represents an additive generator of $H^3(BPU_{n};\mathbb{Z})\cong\mathbb{Z}/n$. It is well-known (and revisited in Section \ref{DG Alg}) that for each prime $p$, the $p$-torsion element of $H^*(K(\mathbb{Z},3);\mathbb{Z})$ of the lowest dimension lives in dimension $2p+2$, which we denote by $y_{p,0}$. In consistency with Theorem \ref{maincalc}, we omit $\chi^*$, the notation for the induced homomorphism of cohomology rings, when there is no risk of ambiguity. In particular, in the following theorem we do not distinguish $y_{p,0}$ from $\chi^*(y_{p,0})$.

\begin{theorem}\label{2p+2}
Let $p$ be a prime. In $H^{2p+2}(BPU_{n};\mathbb{Z})$, we have $y_{p,0}\neq 0$ of order $p$ when $p|n$, and $y_{p,0}=0$ otherwise. Furthermore, the $p$-torsion subgroup of $H^k(BPU_n;\mathbb{Z})$ is $0$ for $3<k<2p+2$.
\end{theorem}
\begin{remarkthm}
When $p=2$, we have $y_{2,0}=x_1^2$.
\end{remarkthm}

When $n=p$ is an odd prime, the essential content of this theorem is implied by work of Vistoli (Theorem 3.4, \cite{Vi}, 2009).

To state the next theorem, we recollect some generality on the topology of classifying spaces of compact Lie groups. Let $G$ be a compact Lie group, $\Gamma$ a closed subgroup of its center. Let $P: BG\rightarrow B(G/\Gamma)$ be the canonical map. Then $P$ fits in a fiber sequence
\[BG\xrightarrow{P}B(G/\Gamma)\rightarrow B^2\Gamma,\]
which induces an action of $\Omega B^2\Gamma\simeq B\Gamma$ on $BG$:
\begin{equation}\label{fiberactiion}
\mu:B\Gamma\times BG\rightarrow BG.
\end{equation}
Let $R$ be a discrete commutative unital ring such that the K{\"u}nneth formula reduces to an isomorphism
\[H^*(B\Gamma\times BG;R)\cong H^*(B\Gamma;R)\otimes_R H^*(BG;R).\]
Then
\[\mu^*: H^*(BG;R)\rightarrow H^*(B\Gamma;R)\otimes_R H^*(BG;R)\]
makes $H^*(BG;R)$ a comodule over $H^*(B\Gamma;R)$. We denote the subring of primitive elements of $H^*(BG;R)$ (the elements $x$ such that $\mu^*(x)=1\otimes x$) by $PH(BG;R)$.

It is well known that we have $\operatorname{Im}P^*\subseteq PH^*(BG;R)$ (Proposition 3.8 of \cite{To}, Toda, 1987). It is therefore natural to ask when the equality holds.

Back to our case of $G=U_n$, $\Gamma=S^1$ and $G/\Gamma=PU_n$, the action
\[K(\mathbb{Z},2)\times BU_n\rightarrow BS^1\times BU_n\rightarrow BU_n\]
corresponds to the action of taking the tensor product of a complex line bundle and an $n$ dimensional complex vector bundle. Toda (\cite{To}, 1987) showed $\operatorname{Im}P^*=PH^*(BU_n;\mathbb{Z}/2)$ for $n\equiv 2\pmod 4$ and $n=4$. Our third theorem concerns the case of integral cohomology:
\begin{theorem}\label{primitive}
For $n>1$ and $k\leq 12$, we have $\operatorname{Im}(H^k(P;\mathbb{Z}))=PH^k(BU_n;\mathbb{Z})$.
\end{theorem}
The author is in debt to Diarmuid Crowley for introducing the notion of primitive elements, which leads to the present formulation of Theorem \ref{primitive}.

In Section \ref{DG Alg}, we study the cohomology of $K(\mathbb{Z},3)$ and the spectral sequence $^{K}E^{*,*}_*$. This is an example of the general theory of differential graded algebras and their bar constructions, developed in \cite{Ca} (Cartan and Serre, 1954-1955) and reviewed in the appendix.

We set up the apparatus that computes the differentials of ${^UE}^{*,*}_*$ in Section \ref{The Higher Diff}. We compare ${^UE}^{*,*}_*$, ${^TE}^{*,*}_*$ with ${^KE}^{*,*}_*$ to give all the differentials of $^{T}E^{*,*}_*$ and show that they detect considerably many differentials of ${^UE}^{*,*}_*$ via the splitting principle of complex vector bundles (Chapter 16, \cite{Sw}, Switzer, 1975).

We collect some auxiliary results and present a proof of Theorem \ref{2p+2} in Section \ref{Auxiliary}. Some of results in this section are devoted to simplify the proof of Theorem \ref{maincalc}, which occupies Section \ref{1<k<6}, \ref{k=7,8}, and \ref{k=9,10}.

In Section \ref{On Primitive} we discuss the primitive elements of $H^*(BU_n;\mathbb{Z})$ which enable us to prove Theorem \ref{primitive}. This also offers another way, in addition to the one discussed in Section \ref{The Higher Diff}, to study the differentials of $^UE_*^{*,*}$.

\section{DG Algebras and the Path Fibration of $K(\mathbb{Z},3)$}\label{DG Alg}
In \cite{Ca} (Cartan and Serre, 1954-1955) a differential graded algebra is constructed to calculate the integral homology of $K(\Pi,n)$ for any positive integer $n$ and a finitely generated abelian group $\Pi$. This approach in particular gives us the cohomology of $K(\mathbb{Z},3)$. However, for the purpose of this paper we need the differentials of the integral Serre spectral sequence of the fiber sequence
\begin{equation}\label{EMfib}
K(\mathbb{Z},2)\rightarrow PK(\mathbb{Z},3) \rightarrow K(\mathbb{Z},3)
\end{equation}
which requires information on the level of chain complexes rather then just cohomology. To obtain such information we consider an acyclic multiplicative construction (\cite{Ca}) $(A(2), A(3), M(3))$ which models the fiber sequence above.

Throughout the rest of this section, we write $E_{R}(x;k)$, $P_{R}(y;l)$, $E(x;k)$, $P_{R}(y;l)$ for exterior algebras and divided power algebras with coefficients in a ring $R$, or $\mathbb{Z}$, and with one generator of a specified degree. All tensor products are over the base ring $\mathbb{Z}$ unless otherwise specified.

In \cite{Ca}, Cartan and Serre constructed differential graded algebras $A(n)$ such that the cohomology of $A(n)$ is isomorphic to that of $K(\Pi,n)$. We use the terminologies such as ``admissible words'' and their ``heights'' as well as the symbols $\sigma$, $\varphi$ and $\gamma_p$ as in \cite{Ca}, and proceed to elaborate this construction for $\Pi=\mathbb{Z}$ and $n=2$ and $3$. The reader may refer to the appendix for a short survey on relevant definitions and results in \cite{Ca}.

In the case $n=2$, the admissible words of height $2$ are of the form $\gamma_k(\sigma^2)$ for any nonnegative integer $k$. Therefore we take $A(2)=P(u,2)$ where $u$ corresponds to the operation $\sigma^{2}$. Since $A(2)$ is $0$ in odd degrees, the differential is $0$.

In the case $n=3$, the admissible words of height $3$ are $a_{p,k}=\varphi_{p}\gamma_{p^k}\sigma^{2}, b_{p,k}=\sigma\gamma_{p^{k+1}}\sigma^{2}$, and $b_1=\sigma^{3}$ for nonnegative integers $k$ and prime numbers $p$. It follows that we have
\[A(3)=\bigotimes_{p}\big[\bigotimes_{k\geq 0}P(a_{p,k}; 2p^{k+1}+2)\otimes_{\mathbb{Z}} E(b_{p,k}; 2p^{k+1}+1)\big] \otimes_{\mathbb{Z}}
  E(b_1; 3).\]
Here the numbers after the semicolons are the degrees of the elements preceding the semicolons. The differential is defined by
\begin{equation}
\begin{cases}
  \bar{d}(\gamma_l(a_{p,k}))=pb_{p,k}\gamma_{l-1}(a_{p,k}),\\
  \bar{d}(b_{p,k})=0, \bar{d}(b_1)=0
\end{cases}
\end{equation}

We proceed to take $M(3)=A(2)\otimes_{\mathbb{Z}} A(3)$ as a graded ring, and define its differential $d$ in such a way that it makes $M(3)$ acyclic, and when passing to $A(3)=\mathbb{Z}\otimes_{A(2)}M(3)$, it induces $\bar{d}$, the differential of $A(3)$. The role of $M(3)$ is explained in Remark \ref{Serrefiltration}.

\begin{lemma}[Lucas' Theorem]\label{Lucas}
 Let $p$ be a prime number, $k=\sum_{r=0}^{n}k_{r}p^r$, and $l=\sum_{r=0}^{n}l_{r}p^r$ such that $0\leq k_{r}, l_{r}
 \leq p-1$ are integers, and $k_n\neq 0$. Then
 \[{k \choose l}\equiv \prod_{r=0}^{n}{k_r \choose l_r} \mod p.\]
 Here ${i \choose j}=0$ for $i<j$.
\end{lemma}

\begin{proof}
 For an independent variable $w$ we have
 $$(1+w)^{k}=\prod_{r=0}^{n}(1+w)^{k_{r}p^{r}}\equiv \prod_{r=0}^{n}(1+w^{p^r})^{k_{r}} \mod p.$$
 The result is verified by comparing the coefficient of $w^{l}$ on both sides of the equation above.
\end{proof}

\begin{lemma}\label{gcd}
 For a prime $p$ and $1\leq i\leq k+1$ the greatest common divisor of $\frac{(p^{k+1}-1)!}{(p^{k+1}-p^i)!}$ and $\frac{p^{i}!}{p^{i-1}}$ is $(p^{i}-1)!$.
\end{lemma}
The proof is elementary and left to the reader.

In particular there are integers $\{\lambda_{i}^{p, k+1}, \mu_{i}^{p, k+1}\}_{i=0}^{k}$ such that
\begin{equation}\label{eq:Lambdas}
\frac{1}{(p^{k+1}-1)!p^i}=\lambda_{i}^{p, k+1}\frac{1}{(p^{k+1}-1)!p^{i-1}}+\mu_{i}^{p, k+1}\frac{1}{(p^{k+1}-p^{i})!p^{i}!}.
\end{equation}

\begin{definition}\label{defM(3)}
 We define $M(3)$ as follows:
 \begin{enumerate}
  \item Let
  \begin{equation}
  M(3)_k=\sum_{i+j=k}A(2)_i\otimes_{\mathbb{Z}} A(3)_j.
  \end{equation}

 \item The differential of $M(3)$ is determined by the following:
  \begin{equation*}
  \begin{split}
  &d(u)=0; \quad d(b_1)=u; \quad d(b_{p,k})=\gamma_{p^{k+1}}(u);\\ &d(a_{p,k})=(pb_{p,k}-\Lambda_{0}^{p, k+1}b_{1}\gamma_{p^{k+1}-1}(u))-\sum_{i=1}^{k}\Lambda_{i}^{p, k+1}b_{p,i-1}\gamma_{p^{k+1}-p^{i}}(u)
  \end{split}
  \end{equation*}
  where $\{\Lambda_{i}^{p, k+1}\}_{i=0}^{k}\subset \mathbb{Z}$ are defined as follows.

  Fix a set of integers $\{\lambda_{i}^{p, k+1}, \mu_{i}^{p, k+1}\}_{i=0}^{k}$ as in Lemma \ref{gcd}. Define
  \begin{enumerate}
  \item
  \[\Lambda_{0}^{p,k+1}=\lambda_{1}^{p, k+1}\cdots \lambda_{k}^{p, k+1}.\]
  \item
     \[\Lambda_{i}^{p,k+1}=\mu_{i}^{p, k+1}\lambda_{i+1}^{p, k+}\cdots \lambda_{k}^{p, k+1}, \textrm{ if } i=1,\cdots, k-1.\]
  \item
     \[\Lambda_{k}^{p,k+1}=\Lambda_{k}^{p, k+1}=\mu_{k}^{p, k+1}.\]
  \end{enumerate}
  Lemma \ref{gcd} ensures that $d$ is indeed a differential.
 \end{enumerate}
  As the tensor product of two cochain complexes, there is a bi-degree on $M(3)$. More precisely, let $B\otimes C$ be a monomial in $M(3)$ such that
  \[B\in A(3);\ C\in A(2).\]
  Then the bi-degree of $B\otimes C$ is $(s,t)=(\operatorname{deg}(B), \operatorname{deg}(C))$, and $s$ induces a filtration $F_K$ on $M(3)$:
  \[F_K^{s}=\{\sum_iB_i\otimes C_i | B_i\in A(3), C_i\in A(2), B_i\textrm{ has degree }\leq s.\}.\]

\end{definition}

The following result plays a central role in this section.

\begin{proposition}\label{M(3)}
 $M(3)$ is acyclic.
\end{proposition}
Before presenting the proof, the following remarks are in order:
\begin{remark}\label{Serrefiltration}
It follows from \cite{Br1} ((4.1) Theorem and (7.2) Corollary) that the spectral sequence associated to the filtration $F$ on $M(3)$ is isomorphic to the Serre spectral sequence of the path space fibration
\begin{equation*}
K: K(\mathbb{Z},2)\rightarrow PK(\mathbb{Z},3)\rightarrow K(\mathbb{Z},3),
\end{equation*}
for cohomology, namely $^KE_*^{*,*}$ defined in Section 1, with coefficients in $\mathbb{Z}$. The case for the Serre spectral sequence for homology is similar.
\end{remark}

\begin{remark}
The reason that we give the unpleasantly complicated formulae in Definition \ref{defM(3)} is as follows. Remark \ref{Serrefiltration} indicates that $^KE_*^{*,*}$ is the spectral sequence associated to the filtered chain complex $M(3)$. Therefore, at least in principle, all the differentials of $^KE_*^{*,*}$ can be determined mechanically, though the process might be tedious. This would be useful if one would like to explore the full potential of the spectral sequence $^UE_*^{*,*}$, perhaps with a computer program. The formulae are of little use though, if one is only interested in low dimensional computation.
\end{remark}

For the proof of Proposition \ref{M(3)}, recall the following lemmas:
\begin{lemma}[Cartan and Serre, \cite{Ca}]\label{eledga}
 For any integer $q$ and a prime number $p$ the following DGA over $\mathbb{Z}/p$ is acyclic:
 \[P_p(a; 2pq+2)\otimes_{\Z} E_p(b; 2q+1)\otimes_{\Z} \mathbb{Z}/p[c; 2q]/c^p\]
 with differential given by
 $$d(c)=0;\ d(b)=c;\ d(\gamma_{l}(a))=c^{p-1}b\gamma_{l-1}(a),\ l\geq 1.$$
\end{lemma}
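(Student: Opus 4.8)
The plan is to prove acyclicity directly, by exhibiting an explicit $\mathbb{Z}/p$-basis of the DGA and showing that the differential splits the entire complex into a direct sum of two-term subcomplexes together with a single copy of $\mathbb{Z}/p$ in degree $0$ spanned by the unit. Because we work over the field $\mathbb{Z}/p$, every nonzero structure constant is a unit, so any two-term complex $x \xrightarrow{d} y$ with $d(x)$ a unit multiple of $y$ is automatically acyclic; the whole argument then reduces to bookkeeping about which basis monomials are paired with which.

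First I would fix the basis $\{\gamma_l(a)\,b^{\epsilon}\,c^{j} : l \ge 0,\ \epsilon\in\{0,1\},\ 0\le j\le p-1\}$, recalling that $\gamma_0(a)=1$, that $b^2=0$ in $E_p(b)$, and that $c^p=0$. Using the Leibniz rule together with $d(c)=0$, $d(b)=c$, and $d(\gamma_l(a))=c^{p-1}b\,\gamma_{l-1}(a)$, I would compute the differential on each monomial. The computation yields exactly two families of nonzero differentials. The first (``internal'') family is
\[
d(\gamma_l(a)\,b\,c^{j}) = \gamma_l(a)\,c^{j+1}, \qquad 0\le j\le p-2,
\]
where the term coming from $d(\gamma_l(a))$ drops out because it carries a factor $b^2=0$. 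The second (``transgressive'') family is
\[
d(\gamma_l(a)) = \gamma_{l-1}(a)\,b\,c^{p-1}, \qquad l\ge 1.
\]
Every remaining monomial is a cycle: for $1\le j\le p-1$ the only candidate boundary term is $c^{p-1+j}b\,\gamma_{l-1}(a)$, which vanishes since $c^{p-1+j}=0$, and likewise $d(\gamma_l(a)\,b\,c^{p-1})=0$.

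The decisive step is to verify that these two families are exhaustive and pairwise disjoint, so that the complex is literally the direct sum of the two-term subcomplexes they generate, plus the span of $1$. Indexing monomials by the triple $(\epsilon,j,l)$, the internal sources occupy the region $\{\epsilon=1,\ 0\le j\le p-2\}$ and hit the targets $\{\epsilon=0,\ 1\le j\le p-1\}$ bijectively; the transgressive sources occupy $\{\epsilon=0,\ j=0,\ l\ge 1\}$ and hit the targets $\{\epsilon=1,\ j=p-1\}$ bijectively, since as $l$ runs over $l\ge 1$ the target index $l-1$ runs over all of $\mathbb{Z}_{\ge 0}$. These four coordinate regions are disjoint, and the unique monomial lying in none of them is $(\epsilon,j,l)=(0,0,0)$, namely the unit $1$. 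Hence the reduced homology vanishes and $H_*\cong\mathbb{Z}/p$ is concentrated in degree $0$, which is the asserted acyclicity.

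I expect the only genuinely delicate point to be this combinatorial matching at the top power $c^{p-1}$, where the two mechanisms interlock: the internal differential on $b\,c^{p-1}$ must vanish (so that $\gamma_l(a)\,b\,c^{p-1}$ is freed as a cycle) precisely so that it can serve as the target of the transgression from $\gamma_{l+1}(a)$. Keeping the role of $c^{p-1}$ straight and confirming that no monomial is counted twice is the main obstacle; the signs play no role over $\mathbb{Z}/p$. As an alternative packaging of the same identities, one could instead filter by powers of $a$: the associated graded differential is the internal one, with homology $P_p(a)\otimes\langle 1,\,b\,c^{p-1}\rangle$, and the single surviving differential $d_1(\gamma_l(a))=\gamma_{l-1}(a)\,b\,c^{p-1}$ then annihilates everything except $1$. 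This is a cleaner conceptual route but rests on exactly the same core computations.
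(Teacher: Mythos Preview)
Your proof is correct and is essentially the same idea as the paper's: the paper simply writes down the contracting homotopy $c\mapsto b$, $c^{p-1}b\,\gamma_{l-1}(a)\mapsto\gamma_l(a)$, and zero elsewhere, which is precisely the inverse of your pairing of basis monomials. Your presentation is more explicit---you actually verify that the pairing is exhaustive and disjoint---whereas the paper leaves this bookkeeping to the reader, but the content is identical.
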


\begin{proof}
 In fact, a chain homotopy can be defined on linear generators as follows:
 $$c\rightarrow b;\ c^{p-1}b\gamma_{l-1}(a)\rightarrow \gamma_{l}(a),\ l\geq 1$$
 and all the other linear generators are sent to $0$.
\end{proof}

Lemma \ref{eledga} immediately generalize to the following

\begin{lemma}\label{M(3)[p]}
 Let $M(3)[p]$ be the DGA
 \[\bigotimes_{k\geq 0}\big[P(a_{p,k}; 2p^{k+1}+2)\otimes_{\Z} E(b_{p,k}; 2p^{k+1}+1)\big]\otimes_{\Z} E(b_1; 3)\otimes_{\Z} P(u;2)\otimes_{\Z} \mathbb{Z}/p\]
 with differential $d[p]$ defined as follows:
 \begin{equation*}
  \begin{split}
  &d[p](u)=0; \quad d[p](b_1)=u; \quad d[p](b_{p,k})=\gamma_{p^{k+1}}(u);\\ &d[p](\gamma_{l}(a_{p,k}))=-\Lambda_{k}^{p,k+1}b_{p,k-1}\gamma_{p^{k+1}-p^{k}}(u)\textrm{ if }k>0,\\
  &d[p](\gamma_{l}(a_{p,0}))=-\Lambda_{k}^{p,1}b_{1}\gamma_{p-1}(u).
  \end{split}
  \end{equation*}
 Then $M(3)[p]$ is acyclic.
\end{lemma}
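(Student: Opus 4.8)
The plan is to deduce Lemma \ref{M(3)[p]} from Lemma \ref{eledga} by decomposing $M(3)[p]$ as a tensor product of acyclic factors and a contractible ``unit'' factor. First I would observe that, working over $\mathbb{Z}/p$, the divided power algebra $P(u;2)\otimes\mathbb{Z}/p$ splits multiplicatively as $\bigotimes_{k\geq 0}\mathbb{Z}/p[\gamma_{p^k}(u)]/(\gamma_{p^k}(u)^p)$, since every $\gamma_m(u)$ factors uniquely (up to a unit mod $p$, by Lemma \ref{Lucas}) as a product of the $\gamma_{p^k}(u)$ according to the base-$p$ expansion of $m$. Matching this truncated-polynomial generator $c_k := \gamma_{p^k}(u)$ with the generators $b_{p,k}$ (for $k\geq 1$) and $b_1$ (for $k=0$), and the $a_{p,k}$ with the $a$ of Lemma \ref{eledga} applied with $q=p^k$, one sees that each block
$$P(a_{p,k};2p^{k+1}+2)\otimes E(b_{p,k};2p^{k}+1)\otimes \mathbb{Z}/p[c_k;2p^k]/c_k^p$$
is exactly the DGA of Lemma \ref{eledga} for $q=p^k$, hence acyclic, once I check that the differential $d[p]$ restricted to this block agrees (up to the unit $-\Lambda_k^{p,k+1}$, which I must verify is a unit mod $p$) with the differential there: $d[p](c_k)=0$, $d[p](b_{p,k})=c_k$ because $\gamma_{p^k}(u)=c_k$, and $d[p](\gamma_l(a_{p,k}))$ is a unit multiple of $c_k^{p-1}b_{p,k}\gamma_{l-1}(a_{p,k})$ because $\gamma_{p^{k+1}-p^k}(u)=\gamma_{(p-1)p^k}(u)= c_k^{p-1}$ up to a unit, again by Lemma \ref{Lucas}.

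The key verification is that $\Lambda_k^{p,k+1}=\mu_k^{p,k+1}$ is invertible mod $p$. This follows from Lemma \ref{gcd}: the equation there expresses $\frac{1}{(p^{k+1}-1)!p^k}$ in terms of $\frac{1}{(p^{k+1}-1)!p^{k-1}}$ and $\frac{1}{(p^{k+1}-p^k)!p^k!}$, and clearing denominators shows $\mu_k^{p,k+1}$ times $\frac{(p^{k+1}-1)!}{(p^{k+1}-p^k)!\,p^k!/(p^k-1)!}$ must account for the part of $p^k$ not already divisible; since $\gcd\{\binom{p^{k+1}-1}{p^k-1},p\}=1$ as shown in that proof, the relevant cofactor is prime to $p$, forcing $p\nmid\mu_k^{p,k+1}$. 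I would spell this out as a short remark rather than recomputing.

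Once each block is identified as an acyclic complex, the whole of $M(3)[p]$ is the tensor product over all $k\geq 0$ of these acyclic blocks (the $k=0$ block involving $b_1$ in place of $b_{p,0}$, with $d[p](\gamma_l(a_{p,0}))=-\Lambda_k^{p,1}b_1\gamma_{p-1}(u)$ matching Lemma \ref{eledga} for $q=1$). By the Künneth theorem over the field $\mathbb{Z}/p$, a tensor product of acyclic complexes is acyclic — more precisely, each factor has the homology of the ground ring $\mathbb{Z}/p$ concentrated in degree $0$, so the tensor product does too, i.e. it is acyclic in the augmented sense used throughout. This finishes the proof. The main obstacle, and the only place requiring genuine care, is bookkeeping the coefficient $\Lambda_k^{p,k+1}$ and confirming it is a mod-$p$ unit; the splitting of $P(u;2)\otimes\mathbb{Z}/p$ into truncated polynomial factors and the matching with Lemma \ref{eledga} are then formal.
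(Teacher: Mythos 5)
Your proposal is correct and is essentially the paper's own argument: the paper likewise splits $P(u;2)\otimes\mathbb{Z}/p$ into the truncated factors $\mathbb{Z}/p[\gamma_{p^k}(u)]/(\gamma_{p^k}(u)^p)$, writes $M(3)[p]$ as a tensor product over $k$ of blocks of the form in Lemma \ref{eledga} (with $b_{p,0}=b_1$), and concludes by acyclicity of each factor. You merely spell out the details the paper leaves implicit, notably that $\Lambda_k^{p,k+1}=\mu_k^{p,k+1}$ is a unit mod $p$ (which the paper verifies only later, inside the proof of Theorem \ref{M(3)}).
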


\begin{proof}
We rewrite $M(3)[p]$ as
 \begin{equation*}
 M(3)[p]=\bigotimes_{k\geq 0}P(a_{p,k}; 2p^{k+1}+2)\otimes_{\Z} E(b_{p,k-1};2p^{k}+1)\otimes_{\Z}\mathbb{Z}[\gamma_{p^k}(u)]/(\gamma_{p^k}(u)^p)\otimes_{\Z}\mathbb{Z}/p,
 \end{equation*}
 where $b_{p,-1}=b_{1}$.
 Notice that $M(3)[p]$ is a tensor product of DGA's of the form in Lemma \ref{eledga} indexed by $k$, and the result follows.
\end{proof}

\begin{proof}[Proof of Proposition \ref{M(3)}]
 We proceed to show that $M(3)\otimes_{\Z} \mathbb{Z}/p$ is acyclic for all primes $p$. Since $M(3)$ is a degree-wise finitely generated free abelian group, this, together with the K{\"u}nneth formula proves the proposition.

 Let ($\tilde{E}_{s,t}^{r}[p], d_{s,t}^{r})$ be the spectral sequence associated to the filtration $F_{K}$ on $M(3)\otimes_{\Z}\mathbb{Z}/p$. Then obviously $E_{s,t}^{0}[p]=E_{s,t}^{1}[p]$ and $d_{*,*}^{1}$ is as follows:
 \[d^1(u)=0;\ d^1(b_1)=0;\ d^1(b_{p,k})=0;\]
 \[
    d^1(\gamma_{l}(a_{p',k}))=
    \begin{cases}
     p'\gamma_{l-1}(a_{p',k})b_{p',k}, \quad \textrm{if }p'\neq p\\
     0,\quad \textrm{if }p'=p
    \end{cases}
 \]
 for any prime $p'$. Therefore we have
 \begin{equation}
 \begin{split}
  \tilde{E}_{*,*}^{2}[p]\cong\bigotimes_{k\geq 0}\big[P_{p}(a_{p,k}; 2p^{k+1}+2)\otimes_{\Z} E_{p}(b_{p,k}; 2p^{k+1}+1)\big] \otimes_{\Z} E_{p}(b_1; 3)\otimes_{\Z} P_{p}(u; 2)\\
  \cong \bigotimes_{k\geq 0}\big[P_{p}(a_{p,k}; 2p^{k+1}+2)\otimes_{\Z} E_{p}(b_{p,k-1}; 2p^{k}+1) \otimes_{\Z} \mathbb{Z}/p[\gamma_{p^k}(u)]/\gamma_{p^k}(u)^p\big]
 \end{split}
 \end{equation}
 where $b_{p,-1}=b_1$. We proceed to consider all the higher differentials on $\tilde{E}_{*,*}^{2}[p]$. Notice that all the non-trivial differentials on $P_{p}(a_{p,k}; 2p^{k+1}+2)\otimes_{\Z} E_{p}(b_{p,k}; 2p^{k}+1) \otimes_{\Z} \mathbb{Z}/p[\gamma_{p^k}(u)]/\gamma_{p^k}(u)^p$ are the following:
 \begin{equation*}
 \begin{split}
   &d^{2(p^{k+1}-p^{k})+1}(a_{p,k})=\Lambda_{k}^{p, k+1}b_{p,k-1}\gamma_{p^k}(u);\\
   &d^{2(p^{k+1}-p^{k})+1}(b_{p,k-1})=\gamma_{p^k}(u);\\
   &d^{2(p^{k+1}-p^{k})+1}(\gamma_{p^k}(u))=0.
 \end{split}
 \end{equation*}
 By definition $\Lambda_{k}^{p, k+1}=\mu^{p,k+1}_{k}$. By Lemma \ref{gcd} $p\lambda^{p,k+1}_{i}+\mu^{p,k+1}_{i}{p^{k+1}-1\choose p^i-1}=1$, which shows that $\Lambda_{k}^{p, k+1}=\mu^{p,k+1}_{k}$ is invertible mod $p$.

 Notice that same statement of the definition of the filtration $F_K$ in (4) of Definition \ref{defM(3)} can be applied to define a filtration on $M(3)[p]$. A direct comparison shows that this filtration induces a spectral sequence which is identical to $\tilde{E}_{*,*}^{2}[p]$ after the $E_2$-page. The theorem then follows from Lemma \ref{M(3)[p]}.
\end{proof}

\begin{remark}
 The construction of $M(3)$ and the proof of Proposition \ref{M(3)} are inspired by exp.9 and exp.11 of \cite{Ca} (Cartan and Serre, 1954-1955).
\end{remark}

\begin{corollary}\label{higherdiff}
 Let $\tilde{E}_{*,*}^*$ be the spectral sequence associated to the filtration $F_K$ on $M(3)$. Then the higher differentials satisfy
\begin{equation*}
 \begin{split}
   &d_{3}(b_1)=u;\\
   &d_{2p^{k+1}+1}(b_{p,k})=\gamma_{p^{k+1}}(u),\quad k\geq 0;\\
   &d_{r}(\gamma_{i}(u))=0,\quad \textrm{for all }r, i,
 \end{split}
\end{equation*}
together with the Leibniz rule. In particular, we have
\begin{equation*}
d_{2p^{k}+1}(b_{p,i_m}\cdots b_{p,i_2}b_{p,i_1})=b_{p,i_{m-1}}\cdots b_{p,i_2}b_{p,i_1}\gamma_{p^{i_m+1}}(u),
\end{equation*}
for $i_1\geq i_2\geq\cdots\geq i_m\geq0$.
\end{corollary}
\begin{proof}
This is straightforward computation, as the differential of $M(3)$ is described explicitly in Definition \ref{M(3)}.
\end{proof}
\begin{remark}
We have not discussed the differentials of
\[b_{p,I}:=\frac{1}{p}d(a_{p,i_m}\cdots a_{p,i_2}a_{p,i_1})=\sum_{j=1}^{r}(-1)^{\mu_j}a_{p,i_m}\cdots b_{p,i_j}\cdots a_{p,i_1},
\]
where $I=(i_m,\cdots,i_2,i_1)$ is an ordered sequence for nonnegative integers $i_m\leq\cdots\leq i_1$, and
\begin{equation}\label{muDefinition}
\mu_j=\sum_{k>j}i_k, \mu_m=1.
\end{equation}
This is somewhat complicated and not needed in this paper.
\end{remark}

To study the cohomology, we will need a diagonal approximation
\[
  \Delta: A(n)\rightarrow A(n)\otimes_{\Z} A(n)
\]
for $n=2$ and $3$. We take $\Delta$ to be the (unique) homomorphism of DGA's with divided power operations such that for an individual admissible word $x$ the following is satisfied:
\begin{equation}\label{Deltax}
\Delta(x)=
\begin{cases}
a_{2,k}\otimes1+b_{2,k-1}\otimes b_{2,k-1}+1\otimes a_{2,k}, \textrm{ if }x=a_{2,k}, n=3, k\geq 1,\\
a_{2,0}\otimes1+b_1\otimes b_1+1\otimes a_{2,0}, \textrm{ if }x=a_{2,0}, n=3,\\
x\otimes 1+1\otimes x,  \textrm{ otherwise.}
\end{cases}
\end{equation}

Consider the following homomorphism induced by $\Delta$:
\[\Delta_p: A_p(n)\rightarrow A_p(n)\otimes_{\Z} A_p(n),\]
where $A_p(n)=A(n)\otimes_{\Z}\mathbb{Z}/p$. As a morphism of $\mathbb{Z}/p$-modules this is simply $\Delta\otimes_{\Z}\mathbb{Z}/p$. However, for $n=3$ and $p=2$ there are some complications on the divided power algebra, as we are about to observe.
\begin{lemma}\label{Deltap}
For $n=2,3$ and any prime $p$, $\Delta_p$ is the (unique) homomorphism of DGA's with divided power operations such that for an individual admissible word $x$ it satisfies
\[\Delta_p(x)=x\otimes 1+1\otimes x.\]
\end{lemma}
\begin{proof}
 For $p\neq2$ there is nothing to show. Let $\bar{a}_{2,k}$, $\bar{b}_{2,k}$ be the image of $a_{2,k}$, $b_{2,k}$ in $A_2(3)$ under the obvious projection. Then it suffices to verify that
 \[\Delta_2(\bar{a}_{2,k})=\bar{a}_{2,k}\otimes1+\bar{b}_{2,k-1}\otimes \bar{b}_{2,k-1}+1\otimes \bar{a}_{2,k}\]
 can be derived from the characterization of $\Delta_2$ in the lemma. Indeed, we have the relation given in exp. 10, \cite{Ca}:
 \[\varphi_2=\gamma_2\cdot\sigma,\]
 which implies
 \begin{equation}
 \begin{cases}
 \bar{a}_{2,k}=\gamma_2(\bar{b}_{2,k-1}),k\geq 1,\\
 \bar{a}_{2,0}=\gamma_2(\bar{b}_1).
 \end{cases}
 \end{equation}
The lemma then follows from the above, together with the product formula for divided power operations (exp. 7, \cite{Ca}).
\end{proof}
It then follows from exp.9, exp.10 of \cite{Ca} that for $n=2,3$ and each prime $p$, $\Delta_p$ induced the cup product over the cohomology ring $H^*(K(\mathbb{Z},n);\mathbb{Z}/p)$. Therefore $\Delta$ is a diagonal approximation.

We proceed to study the dual complex of $A(2)$, $A(3)$ and $M(3)$, which we denote by
\begin{equation*}
\begin{cases}
  C(2)^{i}=\operatorname{Hom}(A(2)_{i}, \mathbb{Z}),\\
  C(3)^{i}=\operatorname{Hom}(A(3)_{i}, \mathbb{Z}),\\
  W(3)^{i}=\operatorname{Hom}(M(3)_{i}, \mathbb{Z}).
\end{cases}
\end{equation*}
Let $y_{p,k}, x_{p,k}, x_1, v$ be the dual of $a_{p,k}, b_{p,k}, b_1, u$, respectively. Then it follows from (\ref{Deltax}) that we have
\begin{equation}\label{xyrelation}
\begin{cases}
y_{2,k}=x_{2,k-1}^2, k\geq 1,\\
y_{2,0}=x_1^2.
\end{cases}
\end{equation}

The $\mathbb{Z}$-algebra structures of $C(2)$ and $C(3)$ induced by the diagonal approximations $\Delta$ follows readily:
\begin{equation}\label{products}
\begin{split}
C(2)&=\mathbb{Z}[v;2],\\
C(3)&=\bigotimes_{p\neq2}\big[\bigotimes_{k\geq 0}\mathbb{Z}[y_{p,k}; 2p^{k+1}+2]\otimes_{\Z} E(x_{p,k}; 2p^{k+1}+1)\big]\\
&\otimes_{\Z}\bigotimes_{k\geq 0}\mathbb{Z}[x_{2,k}; 2^{k+2}+1]\otimes_{\Z}\mathbb{Z}[x_1; 3],\\
W(3)&=C(2)\otimes_{\Z} C(3).
\end{split}
\end{equation}
Again the numbers following the semicolons indicate degrees. We proceed to consider the differentials of the cochain complexes above. For $C(2)$ it follows readily that the differential is trivial. For $C(3)$ it follows from (\ref{Deltax}) and (\ref{xyrelation}) that we have
\begin{equation}\label{C3diff}
\begin{cases}
d_1(x_{p,k})=py_{p,k}, d_1(y_{p,k})=d_1(x_1)=0,\textrm{for $p$ a prime and }k\geq 0,\\
d_1(\alpha\beta)=d_1(\alpha)\beta+(-1)^{\operatorname{deg}(\alpha)}\alpha d_1(\beta),
\end{cases}
\end{equation}
where $\alpha$, $\beta$ take the forms of monomials in $x_1$, $x_{p,k}$ and $y_{q,l}$ for various $p,q,k,l$, such that the exponents of $x_1$ and $x_{2,k}$ ($k\geq0$) in $\alpha\beta$ are at most $1$. The relation (\ref{xyrelation}) ensures that the differential $d_1$ is uniquely determined by the above.
\begin{remark}
It is an observation due to Ben Antieau that $W(3)$ is not a DGA. Indeed, the differential $\delta$ of $W(3)$ does not satisfy the Leibniz rule. Consider $v^2$, the dual of $\gamma_{2}(u)$. More precisely, the pairing $\langle v^2, \xi\rangle$ is characterized by
 \begin{equation*}
 \langle v^2, \xi\rangle=
 \begin{cases}
 k, \quad \xi=k\gamma_{2}(u)\\
 0, \quad \textrm{otherwise.}
 \end{cases}
 \end{equation*}
To find $\delta(v^{2})$, notice that $M(3)_5$ is generated as an abelian group by $b_{2,1}$ and $b_{1}u$, on which the differential acts as $d(b_{2,1})=\gamma_{2}(u)$ and $d(b_{1}u)=u^2=2\gamma_{2}(u)$. Since $b_{1}$ and $u$ are elements in terms of the coproduct on $M(3)$, the dual of $b_{1}u$ is $x_{1}v$. Thus, the relation $\langle v^2, d(-)\rangle=\langle \delta(v^2), -\rangle$ implies $\delta(v^2)=2vx_{1}+x_{2,1}\neq 2vx_{1}$, which disproves the Leibniz formula. However the Leibniz formula holds for the differentials cohomological Serre Spectral sequences.
\end{remark}
The cohomology ring $H^3(K(\mathbb{Z},3);\mathbb{Z})$ can now be described in terms of generators and relations. It is easier to do so by describing its torsion free component, the $p$-primary components for all primes $p$, and their relations. Whenever it is obvious from the context, we do not distinguish a cocycle in $C(3)$ and the cohomology class it represents. Let $_pA$ denote the $p$-primary summand of a ring $A$ (possibly without unit) of which the underlying abelian group is finitely generated. For $R$ a (graded-commutative) ring without unit, let $\hat{R}$ be the (graded anti-commutative) unital ring $R\oplus\mathbb{Z}$, where the summand $\mathbb{Z}$ lives in degree $0$ and has a generator acting as the unit of the ring.
\begin{proposition}\label{K(Z,3)general}
The graded ring structure of $H^*(K(\mathbb{Z},3);\mathbb{Z})$ is described in terms of generators of and relations in $C(3)$ as follows:
\[H^*(K(\mathbb{Z},3);\mathbb{Z})\cong[\bigotimes_{p}\widehat{_pH^*}(K(\mathbb{Z},3);\mathbb{Z})]
\otimes_{\Z}\mathbb{Z}[x_1]/(y_{2,0}-x_1^2),
\]
where $p$ runs over all prime numbers, and $_pH^*(K(\mathbb{Z},3);\mathbb{Z})$
is the graded-commutative $\mathbb{Z}/p$-subalgebra (without unit) of $C_3\otimes_{\Z}\mathbb{Z}/p$, generated by the elements of the form
\begin{equation}\label{pgenerators}
  y_{p,I}:=\frac{1}{p}d_1(x_{p,i_m}\cdots x_{p,i_2}x_{p,i_1})=\sum_{j=1}^r(-1)^{\mu_j}x_{p,i_m}\cdots y_{p,i_j}\cdots x_{p,i_1},
\end{equation}
where $I=(i_m,\cdots,i_1)$ is an ordered sequence of nonnegative integers $i_m<\cdots <i_1$, and $\mu_j$ is defined as in (\ref{muDefinition}). For $I=(i)$, we have $y_{p,I}=y_{p,i}$.

Moreover, the classes $\{y_{p,i}\}_{i\geq 0}$ are algebraically independent.

Similarly, there is a set of generators for homology given by $b_{p,I}$ for nonnegative integers $i_m\leq\cdots\leq i_1$.
\end{proposition}
Beware that the elements $\{y_{p,I}\}_I$ are not algebraically independent. For example, fix $i_0>i_1>i_2\geq 0$, and let $I_0=(i_2,i_1)$, $I_1=(i_2,i_0)$ and $I_2=(i_1,i_0)$. Then one has the following relation:
\[y_{p,0}y_{p,I_0}-y_{p,1}y_{p,I_1}+y_{p,2}y_{p,I_2}=0.\]
\begin{proof}
The proof of the statement for cohomology and homology are the same and we only present the one for cohomology.

It follows from (\ref{C3diff}) that the differential of $C(3)$ localized at $p$ is trivial modulo $p$. Therefore the $\mathbb{Z}/p$-algebra $H^{*}(K(\mathbb{Z},3);\mathbb{Z}/p)$ is isomorphic to
\begin{equation}\label{K(Z,3)Z/p}
 \begin{cases}
 \big[\bigotimes_{k\geq 0}\mathbb{Z}/p[y_{p,k}; 2p^{k+1}+2]\otimes_{\Z} E(x_{p,k}; 2p^{k+1}+1)\big]\otimes_{\Z}\mathbb{Z}/p[x_1; 3]\textrm{ if } p>2,\\
 \bigotimes_{k\geq 0}\mathbb{Z}/2[x_{2,k}; 2^{k+2}+1]\otimes_{\Z}\mathbb{Z}/2[x_1; 3]\textrm{ if } p=2.
 \end{cases}
\end{equation}
On the other hand, for any prime $p$, a $p$-primary element of $H^*(K(\mathbb{Z},3);\mathbb{Z})$ is $p$-torsion. Therefore the mod $p$ reduction homomorphism
 \[H^*(K(\mathbb{Z},3);\mathbb{Z})\rightarrow H^*(K(\mathbb{Z},3);\mathbb{Z}/p)\]
is injective. To complete the proof, notice that the relation $y_{2,0}-x_1^2=0$ is given in (\ref{xyrelation}).
\end{proof}

It suffices in this paper to have the following
\begin{corollary}\label{K(Z,3)}
In degree less then $15$, $H^{*}(K(\mathbb{Z}, 3);\mathbb{Z})$ is isomorphic to the following ring:
 \[\mathbb{Z}[x_1; 3]\otimes_{\Z}\bigotimes_{k\geq 0; p}\mathbb{Z}/p[y_{p,k}; 2p^{k+1}+2]/(x_{1}^2-y_{2,0}),\]
 where $p$ runs over all prime numbers.
\end{corollary}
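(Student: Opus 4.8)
The plan is to identify $H^{*}(K(\mathbb{Z},3);\mathbb{Z})$ with the $t=0$ row of the spectral sequence $\tilde{E}_{*}^{*,*}$ and to read that row off the presentation of $\tilde{E}_{2}^{*,*}$ already obtained. By construction $\tilde{E}_{2}^{s,t}\cong H^{s}(A(3);H^{t}(K(\mathbb{Z},2);\mathbb{Z}))$, and since $H_{*}(A(3);\mathbb{Z})\cong H_{*}(K(\mathbb{Z},3);\mathbb{Z})$ the universal coefficient theorem turns this into $H^{s}(K(\mathbb{Z},3);H^{t}(K(\mathbb{Z},2);\mathbb{Z}))$. As $H^{t}(K(\mathbb{Z},2);\mathbb{Z})$ equals $\mathbb{Z}$ for $t$ even and $0$ for $t$ odd, the edge homomorphism identifies $H^{s}(K(\mathbb{Z},3);\mathbb{Z})$ with the subring $\tilde{E}_{2}^{s,0}$; so it suffices to describe $\tilde{E}_{2}^{*,0}$ in degrees $<15$.

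Next I would pin down $\tilde{E}_{2}^{*,*}$ as a ring. The $E_{1}$-page is the associated graded of $W(3)$, and on it $d_{1}$ is determined by $d_{1}(x_{p,k+1})=py_{p,k}$ together with $d_{1}(v)=d_{1}(x_{1})=d_{1}(y_{p,k})=0$. Passing to $d_{1}$-cohomology kills each $x_{p,k}$ with $k\geq1$ and turns $y_{p,k}$ into a class of order $p$, while the relation $x_{1}^{2}=y_{2,0}$ inherited from $W(3)$ survives — consistently with graded commutativity, since $2y_{2,0}=0$. Hence, in the range in which the Künneth correction terms of this $d_{1}$-complex are still trivial, $\tilde{E}_{2}^{*,*}$ is the free graded-commutative $\mathbb{Z}$-algebra on $v$, $x_{1}$ and the $y_{p,k}$ modulo $(py_{p,k})$ and $(x_{1}^{2}-y_{2,0})$; a count of bidegrees shows the first such correction term has total degree well above $15$. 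Setting $v=0$, i.e. restricting to the $t=0$ row, then yields $\tilde{E}_{2}^{*,0}\cong E(x_{1};3)\otimes\bigotimes_{p,k\geq0}\mathbb{Z}/p[y_{p,k};2p^{k+1}+2]/(x_{1}^{2}-y_{2,0})$ in the relevant range, where the factor $E(x_{1};3)$ is understood together with the relation, so that $x_{1}^{2}$ is the order-two class $y_{2,0}$ rather than $0$.

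It then remains only to record which generators occur in degrees $<15$. Since $\deg y_{p,k}=2p^{k+1}+2<15$ amounts to $p^{k+1}\leq6$, the only such $y_{p,k}$ are $y_{2,0},y_{3,0},y_{2,1},y_{5,0}$, of degrees $6,8,10,12$; together with $x_{1}$ in degree $3$ and the relation $x_{1}^{2}=y_{2,0}$ these account for the entire ring in that range, which is exactly the presentation in the statement.

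The genuinely delicate step is the bookkeeping in the second paragraph: one must check that in total degree $<15$ the $d_{1}$-cohomology of $W(3)$ picks up no relations beyond $(py_{p,k})$ and $(x_{1}^{2}-y_{2,0})$ and no extra classes — equivalently that every $\mathrm{Tor}$-contribution in the pertinent Künneth computation sits in degree $\geq15$ — and that the identification of $\tilde{E}_{2}^{*,0}$ with $H^{*}(K(\mathbb{Z},3);\mathbb{Z})$ is multiplicative rather than merely additive. Everything else is a direct transcription of the differentials already recorded in Corollaries \ref{higherdiff} and \ref{diff0}.
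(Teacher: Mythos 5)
Your proposal is correct and follows essentially the same route the paper takes implicitly: the corollary is read off from the bottom row $\tilde{E}_{2}^{*,0}\cong H^{*}(A(3);\mathbb{Z})\cong H^{*}(K(\mathbb{Z},3);\mathbb{Z})$ of the spectral sequence, which is computed from the explicit presentation of $W(3)$ and the differential $d_{1}(x_{p,k+1})=py_{p,k}$, exactly as in your second paragraph. One correction to your bookkeeping: the first K\"unneth correction term, $\mathrm{Tor}(\mathbb{Z}/2\{y_{2,0}\},\mathbb{Z}/2\{y_{2,1}\})$, lands in total degree $6+10-1=15$ exactly, not ``well above'' $15$ --- this is precisely why the bound in the statement is sharp, as the remark following the corollary (the extra class in degree $15$) shows, so the conclusion for degrees $<15$ still stands but the range cannot be enlarged.
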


We proceed to apply this setup to study the Serre spectral sequence ${^KE}_{*}^{*,*}$. Let $F_K$ be the filtration on $W(3)$ obtained by dualizing the filtration $F$ on $M(3)$. Then the spectral sequence associated to $F_K$ is isomorphic to ${^KE}_{*}^{*,*}$ starting from the $E_2$-page (Remark \ref{Serrefiltration}).

In particular, we have
\begin{equation*}
{^KE}_{2}^{s,t}\cong H^s(K(\mathbb{Z},3);H^t(K(\mathbb{Z},2);\mathbb{Z}))\cong
\begin{cases}
H^s(K(\mathbb{Z},3);\mathbb{Z}), t \textrm{ is even},\\
0, t \textrm{ is odd}.
\end{cases}
\end{equation*}

The higher differentials of ${^KE}_{2}^{*,*}$ may be obtained by carefully writing down a formula of the dual of the differential of $M(3)$ as given in Definition \ref{M(3)}. Alternatively, we may consider the dual of the spectral sequence ${^KE}_{*,*}^*$. For example, the counterpart of Corollary \ref{higherdiff} is given in the following
\begin{corollary}\label{diff0}
 The higher differentials of ${^KE}_{*}^{*,*}$ satisfy
 \begin{equation*}
 \begin{split}
   &d_{3}(v)=x_1,\\
   &d_{2p^{k+1}-1}(p^{k}x_{1}v^{lp^{e}-1})=v^{lp^{e}-1-(p^{k+1}-1)}y_{p,k},\quad k\geq e, \operatorname{gcd}(l,p)=1,\\
   &d_{r}(x_1)=d_{r}(y_{p,k})=0,\quad \textrm{for all }r, k>0,
 \end{split}
\end{equation*}
and for $I=(i_m,\cdots,i_1)$, $0<k<i_m<\cdots <i_1$,
\begin{equation*}
 d_{2p^{k}+1}(y_{p,I}v^{p^{k}})=y_{p,I'},
\end{equation*}
where $I'=(k,i_m,\cdots,i_1)$, and the Leibniz rule. Here ${^KE}_{2p^{k+1}-1}^{3,2(lp^e-1)}$ is generated by $p^{k}v^{lp^{e}-1}x_1$.
\end{corollary}
This corollary covers all the differentials we need from this spectral sequence.
\begin{proof}
The first three equations follows by straightforward computation. To prove the last one, we consider the dual of the last equation of Corollary \ref{higherdiff}:
\begin{equation}\label{db_p,I}
d_{2p^{k}+1}(b_{p,i_m}\cdots b_{p,i_2}b_{p,i_1})=b_{p,i_{m-1}}\cdots b_{p,i_2}b_{p,i_1}\gamma_{p^{i_m+1}}(u),
\end{equation}
for $i_m<\cdots <i_1$. We make this precise as follows. For obvious degree reasons we have $\tilde{E}_r^{s,0}\subset\tilde{E}_2^{s,0}$ for $r>2$. We then apply (\ref{db_p,I}) to obtain the following
\[b_{p,i_m}\cdots b_{p,i_2}b_{p,i_1}\in\tilde{E}_{2p^{k}+1}^{s-1,0}\subset\tilde{E}_2^{s,0}
\]
for some $s$. Since $E_2^{s,0}$ are direct sums of cyclic groups of prime prders for $s>3$, we apply the universal coefficient theorem to obtain
\[^KE_2^{s,0}\cong\operatorname{Ext}_{\mathbb{Z}}^1(\tilde{E}^{s-1,0}_2,\mathbb{Z}),\]
and furthermore,
\[^KE_2^{s,0}\cong\operatorname{Ext}_{\mathbb{Z}}^1(\tilde{E}^{s-1,0}_2,\mathbb{Z})
\twoheadrightarrow\operatorname{Ext}_{\mathbb{Z}}^1(\tilde{E}^{s-1,0}_{2p^k+1},\mathbb{Z}),\]
where the last arrow is a splitting surjection. The differential
\[^Kd_{2p^k+1}^{s-2p^k-1,2p^k}: {^K}E_{2p^k+1}^{s-2p^k-1,2p^k}\rightarrow {^KE}_{2p^k+1}^{s,0}\]
is obtained by applying the functor $\operatorname{Ext}_{\mathbb{Z}}^1(-,\mathbb{Z})$ to the differential
\begin{equation*}
\begin{split}
d^{2p^k+1}_{s-1,0}: \tilde{E}^{2p^k+1}_{s-1,0}&\rightarrow \tilde{E}^{2p^k+1}_{s-2p^k-2,2p^k},\\
b_{p,k}b_{p,i_m}\cdots b_{p,i_2}b_{p,i_1}&\mapsto b_{p,i_m}\cdots b_{p,i_2}b_{p,i_1}\gamma_{p^{k+1}}(u)
\end{split}
\end{equation*}
and the result follows.
\end{proof}
Figure \ref{KE} shows a low dimensional picture.
\begin{remark}
In principle, Corollary \ref{diff0} may be proved via studying how the differential of $W(3)$ behaves with respect to the dual filtration of $F$. But this could be complicated.
\end{remark}
\begin{figure}\label{KE}
\begin{tikzpicture}
\draw[step=1cm,gray, thick] (0,0) grid (8,6);
\draw [ thick, <->] (0,7)--(0,0)--(9,0);

\node [below] at (3,0) {$3$};
\node [below] at (6,0) {$6$};
\node [below] at (8,0) {$8$};
\node [left] at (0,2) {$2$};
\node [left] at (0,4) {$4$};
\node [left] at (0,6) {$6$};
\node [right] at (9,0) {$s$};
\node [above] at (0,7) {$t$};

\node [above right] at (0,0) {$\mathbb{Z}$};
\node [above right] at (0,2) {$\mathbb{Z}$};
\node [above right] at (0,4) {$\mathbb{Z}$};

\node [above right] at (3,0) {$\mathbb{Z}$};
\node [above right] at (3,2) {$\mathbb{Z}$};
\node [above right] at (3,4) {$\mathbb{Z}$};

\node [above right] at (6,0) {$\mathbb{Z}/2$};
\node [above right] at (6,2) {$\mathbb{Z}/2$};
\node [above right] at (6,4) {$\mathbb{Z}/2$};

\node [above right] at (8,0) {$\mathbb{Z}/3$};
\node [above right] at (8,2) {$\mathbb{Z}/3$};
\node [above right] at (8,4) {$\mathbb{Z}/3$};

\draw (0,2) [thick, ->] to (3,0);
\draw (3,2) [thick, ->] to (6,0);
\draw (0,4) [thick, ->] to  node [above] {$\times2$} (3,2);
\draw (3,4) [thick, ->] to (8,0);
\draw (0,6) [thick, ->] to node [above] {$\times3$} (3,4);
\end{tikzpicture}\\
\caption{Some non-trivial differentials of ${^KE}_{*}^{*,*}$.}
\end{figure}

\section{The Higher Differentials of $^{U}E_{*}^{*,*}$}\label{The Higher Diff}
In this section we identify some higher differentials of the spectral sequence $^{U}E_{*}^{*,*}$ which make it possible to prove Theorem \ref{maincalc}. To do so, we compare it to $^{K}E_{*}^{*,*}$, via the following commutative diagram:
\begin{equation}\label{diag}
\begin{CD}
  BS^1 @>\simeq >> K(\mathbb{Z},2)        @>>>   *                @>>>  K(\mathbb{Z},3)\\
  \Phi:         @.        @VV B\varphi V      @VVV                        @|\\
                @.        BT^{n}           @>>> BPT^{n}     @>>>   K(\mathbb{Z},3)\\
  \Psi:         @.          @VVB\psi V                         @VVB\psi' V    @|\\
                @.        BU_{n}           @>>> BPU_{n}    @>>> K(\mathbb{Z},3)\\
  \end{CD}
\end{equation}
Here $\varphi : S^1\rightarrow T^{n}$ is the diagonal, and $\psi$ and $\psi'$ are the inclusions of the maximal tori.
Then for any compact Lie group $G$ and its maximal torus $T$, $H^{*}(BG; \mathbb{Q})$ is the sub-$\mathbb{Q}$-algebra of $H^{*}(BT; \mathbb{Q})$ stable under the action of the Weyl group. In our case, we have
\begin{equation}\label{splitPU}
 H^{*}(BPU_{n}; \mathbb{Q})\xrightarrow{\cong} H^{*}(BPT^{n}; \mathbb{Q})^{\mathbf{W}},
\end{equation}
where ${\mathbf{W}}$ is the Weyl group. (See \cite{Hs}, Hsiang, 1975.)

\begin{remark}
Theorem \ref{primitive} is equivalent to the assertion that the homomorphism (\ref{splitPU}) is a surjection in dimension $\leq 12$, with the coefficient ring $\mathbb{Q}$ replaced by $\mathbb{Z}$.
\end{remark}

Let $v$ be the multiplicative generator of $H^{*}(BS^1;\mathbb{Z})$, $v^i$ be the $i$th copy of $v$ in $H^{*}(B(S^1)^n;\mathbb{Z})\cong H^{*}((BT^{n};\mathbb{Z})$, and $c_k\in H^{*}(BU_{n};\mathbb{Z})$ be the $k$th universal Chern class. Moreover let $\sigma_k\in H^{*}(BT^{n};\mathbb{Z})$ be the $k$th elementary polynomial in $v_1, \cdots, v_n$. In this case we have the splitting principle, which asserts that the above argument applies to integral cohomology (\cite{Sw}, Switzer, 1975). More precisely, it says that $B\psi^*$ is the inclusion taking $c_k$ to $\sigma_k$, or in other words,
\[B\psi^{*}(\sum_{k=0}^{n}c_{k}w^{k})=\prod_{i=1}^{n}(1+v_{i}w)\]
where $w$ is a polynomial generator.

Our first result is the following

\begin{proposition}\label{diff1}
 The differential  $^{T}d_{r}^{*,*}$, is partially determined as follows:

\begin{equation}
 ^{T}d_{r}^{*,2t}(v_{i}^{t}\xi)={(B\pi_i)^{*}}({^Kd}_{r}^{*,2t}(v^{t}\xi)),
\end{equation}

where $\xi\in {^{T}E}_{r}^{0,*}$, a quotient group of $H^*(K(\mathbb{Z}, 3);\mathbb{Z})$, and $\pi_i: T^{n}\rightarrow S^1$ is the projection of the $i$th diagonal entry. In plain words, $^{T}d_{r}^{*,2t}(v_{i}^{t}\xi)$ is simply $^{K}d_{r}^{*,2t}(v^{t}\xi)$ with $v$ replaced by $v_i$.
\end{proposition}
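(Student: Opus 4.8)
The plan is to realise the substitution $v\mapsto v_i$ as the map on spectral sequences induced by a genuine morphism of fibrations, obtained from a morphism of central extensions in which $\rho_i$ occurs. The key observation is that, although $\rho_i\colon T^n\to S^1$ does not descend to the quotient $PT^n$, it nevertheless fits --- together with the diagonal $\varphi\colon S^1\to T^n$, for which $\rho_i\circ\varphi=\mathrm{id}_{S^1}$ --- into a morphism of central extensions
\begin{CD}
1 @>>> S^1 @>\varphi>> T^n @>>> PT^n @>>> 1\\
@| @| @VV\rho_iV @VVV @|\\
1 @>>> S^1 @>\mathrm{id}>> S^1 @>>> 1 @>>> 1,
\end{CD}
the left-hand square commuting precisely because $\rho_i\varphi=\mathrm{id}$ and the right-hand square trivially.

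First I would apply the classifying-space functor together with the functorial ``one further delooping'' given by the $k$-invariant of a central extension. A morphism of central extensions of abelian groups induces a morphism of the associated four-term fibration sequences, and the last three terms of these give the commutative diagram
\begin{CD}
\mathbf{B}T^n @>>> \mathbf{B}PT^n @>>> K(\mathbb{Z},3)\\
@VV\mathbf{B}\rho_iV @VVV @|\\
\mathbf{B}S^1 @>>> * @>>> K(\mathbb{Z},3),
\end{CD}
in which the bottom row is the path--loop fibration underlying ${}^{K}E^{*,*}_{*}$ (with $*=\mathbf{B}\{1\}$ a point), the top row is the fibration underlying ${}^{T}E^{*,*}_{*}$, the map of base spaces is the double delooping of $\mathrm{id}_{S^1}$ and hence the identity of $K(\mathbb{Z},3)$, and the map of fibres is $\mathbf{B}\rho_i\colon \mathbf{B}T^n\to\mathbf{B}S^1$, which on cohomology sends $v$ to $v_i$. (Precomposing this diagram with $\Phi$ from \eqref{diag} recovers the identity of the path--loop fibration, since $\mathbf{B}\rho_i\circ\mathbf{B}\varphi=\mathbf{B}(\rho_i\varphi)=\mathrm{id}$, so it is a retraction of the upper half of \eqref{diag}.)

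Next I would read off the induced morphism of cohomological Serre spectral sequences. Since $K(\mathbb{Z},3)$ is simply connected the coefficient systems are trivial, and the diagram above induces a morphism $(\mathbf{B}\rho_i)^{*}\colon {}^{K}E^{*,*}_{*}\to {}^{T}E^{*,*}_{*}$ which on $E_2$ is the identity on the base factor $H^{*}(K(\mathbb{Z},3);\mathbb{Z})$ and is $(\mathbf{B}\rho_i)^{*}\colon H^{*}(\mathbf{B}S^1;\mathbb{Z})\to H^{*}(\mathbf{B}T^n;\mathbb{Z})$, $v^{t}\mapsto v_i^{t}$, on the fibre factor; on the bottom row it is the map of quotients of $H^{*}(K(\mathbb{Z},3);\mathbb{Z})$ induced by the identity. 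Thus, for $\Xi\in {}^{T}E_r^{0,*}$ --- which, being a quotient of $H^{*}(K(\mathbb{Z},3);\mathbb{Z})$, lifts there and hence equals $(\mathbf{B}\rho_i)^{*}$ of the corresponding class, still denoted $\Xi$, in ${}^{K}E_r^{0,*}$ --- we have $(\mathbf{B}\rho_i)^{*}(v^{t}\Xi)=v_i^{t}\Xi$; and since a morphism of spectral sequences preserves pages, this class survives to ${}^{T}E_r$ exactly when $v^{t}\Xi$ survives to ${}^{K}E_r$, so the right-hand side of the asserted formula is meaningful precisely when the left-hand side is.

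Finally, a morphism of spectral sequences commutes with all differentials, whence
\begin{equation*}
{}^{T}d_r^{*,2t}(v_i^{t}\Xi)={}^{T}d_r^{*,2t}\bigl((\mathbf{B}\rho_i)^{*}(v^{t}\Xi)\bigr)=(\mathbf{B}\rho_i)^{*}\bigl({}^{K}d_r^{*,2t}(v^{t}\Xi)\bigr),
\end{equation*}
which is the proposition; the ``plain words'' reformulation is exactly the statement that $(\mathbf{B}\rho_i)^{*}$ acts on the target of ${}^{K}d_r$ by preserving the $H^{*}(K(\mathbb{Z},3);\mathbb{Z})$-part and replacing $v$ by $v_i$. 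The argument is essentially formal naturality; the only steps needing genuine care are verifying that the morphism of central extensions really induces the displayed diagram with the \emph{identity} on $K(\mathbb{Z},3)$ (not merely with some unspecified self-equivalence), and keeping the bookkeeping straight between ${}^{K}E_r^{*,0}$, ${}^{T}E_r^{*,0}$ and $H^{*}(K(\mathbb{Z},3);\mathbb{Z})$ when lifting $\Xi$. I expect the first of these to be the only place where something could actually go wrong; everything after it is routine.
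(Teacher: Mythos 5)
Your argument is correct and coincides with the paper's: the paper's entire proof is the one-line remark that the Serre spectral sequence is functorial, and your proposal simply makes explicit the morphism of fibrations (induced by the morphism of central extensions built from $\rho_i$ and the splitting $\rho_i\varphi=\mathrm{id}_{S^1}$) to which that functoriality is applied. The only small overstatement is the claim that $v_i^{t}\Xi$ survives to ${}^{T}E_r$ \emph{exactly} when $v^{t}\Xi$ survives to ${}^{K}E_r$ --- a morphism of spectral sequences gives only the ``if'' direction, but that is all the proposition needs.
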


The proof is straightforward since the Serre spectral sequence is functorial.

We proceed to study the differentials $^{T}d_{*}^{0,*}$ with domain in the leftmost column.
\begin{proposition}\label{diff2}
 \begin{enumerate}

 \item The differential $^{T}d_{3}^{0,t}$ is given by the ``formal divergence''

 \[\nabla=\sum_{i=1}^{n}(\partial/\partial v_i): H^{t}(BT^{n};R)\rightarrow H^{t-2}(BT^{n};R),\]
 in such a way that $^{T}d_{3}^{*,*}=\nabla(-)\cdot x_{1}.$ For any ground ring $R=\mathbb{Z}$ or $\mathbb{Z}/m$ for any integer $m$.

 \item The spectral sequence degenerates at ${{^T}E}^{0,*}_{4}$. Indeed, we have $^{T}E_{\infty}^{0,*}={^{T}E_{4}}^{0,*}=\operatorname{Ker}{^Td}_{3}^{0,*}=\mathbb{Z}[v_{1}-v_{n},\cdots, v_{n-1}-v_{n}]$.
 \end{enumerate}
\end{proposition}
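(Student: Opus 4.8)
The plan is to deduce part (1) by comparing $^{T}E^{*,*}_*$ with $^{K}E^{*,*}_*$ through diagram (\ref{diag}), exploiting the transgression $^{K}d_3(v)=x_1$, and then to derive part (2) from a short computation of $\textrm{Ker}(\nabla)$ together with the edge homomorphism of the fibration $\mathbf{B}T^n\to\mathbf{B}PT^n\to K(\mathbb{Z},3)$.

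For part (1): since $K(\mathbb{Z},3)$ is $2$-connected, $^{T}E_2^{s,t}=0$ for $s=1,2$, so $^{T}d_2$ vanishes on the bottom row and on the left edge; hence $^{T}E_2={}^{T}E_3$ and $^{T}E_3^{0,*}=H^*(\mathbf{B}T^n)=\mathbb{Z}[v_1,\dots,v_n]$ as a ring. The classes $x_1$ and $y_{p,k}$ lie in the bottom row $^{T}E_*^{*,0}$, which supports no nonzero differential, so $^{T}d_3$ annihilates them. For the generators $v_i$: the morphism of Serre spectral sequences $^{T}E_r^{*,*}\to{}^{K}E_r^{*,*}$ induced by $\Phi$ is the identity on the bottom row (the base map of $\Phi$ is the identity of $K(\mathbb{Z},3)$) and sends each $v_i$ to $v$ on the left edge (the fibre map $\mathbf{B}\varphi$ is the diagonal), so naturality together with $^{K}d_3(v)=x_1$ (Corollary \ref{diff0} and Theorem \ref{idspecseq}) forces $^{T}d_3(v_i)=x_1$ for every $i$ --- this is the $t=1$, $\Xi=1$ instance of Proposition \ref{diff1}. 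Since each $v_i$ has even degree, the Leibniz rule produces no signs, so for a monomial one computes $^{T}d_3(v_1^{a_1}\cdots v_n^{a_n})=\bigl(\sum_i a_i\,v_1^{a_1}\cdots v_i^{a_i-1}\cdots v_n^{a_n}\bigr)x_1=\nabla(v_1^{a_1}\cdots v_n^{a_n})\,x_1$, and additivity extends this to $^{T}d_3(-)=\nabla(-)\cdot x_1$ on all of $^{T}E_3^{0,*}$. Reduction of coefficients modulo $m$ is a morphism of fibrations, so $^{T}d_3(v_i)=x_1$ and the Leibniz computation persist verbatim over $\mathbb{Z}/m$.

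For part (2), I first compute $\textrm{Ker}({}^{T}d_3^{0,*})$. By Corollary \ref{K(Z,3)}, $H^3(K(\mathbb{Z},3);\mathbb{Z})=\mathbb{Z}\cdot x_1$ is free of rank one, so multiplication by $x_1$ identifies $H^{2t-2}(\mathbf{B}T^n)$ with $^{T}E_3^{3,2t-2}$; hence $^{T}d_3^{0,2t}(f)=\nabla(f)\,x_1$ vanishes precisely when $\nabla(f)=0$, i.e. $\textrm{Ker}({}^{T}d_3^{0,*})=\textrm{Ker}(\nabla)$. In the coordinates $w_i=v_i-v_n$ $(1\le i\le n-1)$ and $s=v_n$, the chain rule gives $\nabla=\partial/\partial s$; since $\mathbb{Z}$ is torsion-free, $\partial F/\partial s=0$ forces $F$ to be independent of $s$, so $\textrm{Ker}(\nabla)=\mathbb{Z}[w_1,\dots,w_{n-1}]=\mathbb{Z}[v_1-v_n,\dots,v_{n-1}-v_n]$, which is therefore $^{T}E_4^{0,*}$.

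It remains to show $^{T}E_\infty^{0,*}={}^{T}E_4^{0,*}$ (for degree reasons no differential can hit the left edge, so only the differentials out of it must be killed). Write $i\colon\mathbf{B}T^n\to\mathbf{B}PT^n$ for the fibre inclusion; the edge homomorphism identifies $^{T}E_\infty^{0,*}$ with $\textrm{Im}(i^*)\subseteq H^*(\mathbf{B}T^n)$. Now $i=\mathbf{B}q$ for the quotient $q\colon T^n\to PT^n=T^n/S^1$, and the character $\chi_i\chi_n^{-1}$ of $T^n$ is trivial on the diagonal circle, hence descends to $PT^n$; so $v_i-v_n\in\textrm{Im}(i^*)$ for $1\le i\le n-1$, and since $i^*$ is a ring homomorphism, $\textrm{Im}(i^*)\supseteq\mathbb{Z}[v_1-v_n,\dots,v_{n-1}-v_n]$. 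Combined with the reverse inclusion $\textrm{Im}(i^*)={}^{T}E_\infty^{0,*}\subseteq{}^{T}E_4^{0,*}=\mathbb{Z}[v_1-v_n,\dots,v_{n-1}-v_n]$, this yields equality throughout; and since the groups $^{T}E_r^{0,*}$ with $r\ge 4$ are squeezed between $^{T}E_\infty^{0,*}$ and $^{T}E_4^{0,*}$, every $^{T}d_r^{0,*}$ with $r\ge 4$ vanishes, as claimed. I expect this last step to be the main obstacle: one must correctly match $^{T}E_\infty^{0,*}$ with the image of restriction to the fibre and recognise that the characters of $PT^n$ furnish precisely the classes $v_i-v_n$; part (1) and the computation of $\textrm{Ker}(\nabla)$ are routine once Proposition \ref{diff1} and the low-degree structure of $H^*(K(\mathbb{Z},3))$ are in hand.
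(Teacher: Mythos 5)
Your proof is correct, and it follows the paper's route for part (1) (naturality of the Serre spectral sequence along $\Phi$ together with the Leibniz rule, i.e.\ the $t=1$, $\Xi=1$ case of Proposition \ref{diff1}) and for the computation of $\textrm{Ker}(\nabla)$ (the same change of variables reducing $\nabla$ to $\partial/\partial v_{n}$, with torsion-freeness of $\mathbb{Z}$ forcing independence of $v_{n}$). The one place you genuinely diverge is the degeneration statement ${^{T}E}_{\infty}^{0,*}={^{T}E}_{4}^{0,*}$: the paper disposes of it by invoking (\ref{splitPU}), i.e.\ the rational identification of $H^{*}(\mathbf{B}PT^{n};\mathbb{Q})$ with the polynomial ring on the differences, which on its face only controls the higher differentials rationally. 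You instead identify ${^{T}E}_{\infty}^{0,*}$ with the image of the restriction $H^{*}(\mathbf{B}PT^{n};\mathbb{Z})\to H^{*}(\mathbf{B}T^{n};\mathbb{Z})$ via the edge homomorphism, and observe that the characters $\chi_{i}\chi_{n}^{-1}$ are trivial on the diagonal circle and hence descend to $PT^{n}$, so each $v_{i}-v_{n}$ is a permanent cocycle integrally; sandwiching ${^{T}E}_{\infty}^{0,*}$ between $\mathbb{Z}[v_{1}-v_{n},\dots,v_{n-1}-v_{n}]$ and ${^{T}E}_{4}^{0,*}$ then kills all higher differentials on the left edge. This buys an honest integral proof of the step the paper only sketches, at the cost of a few extra lines; the two arguments are otherwise the same.
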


\begin{proof}
 (1) is an immediate consequence of the Leibniz rule, chain rule, and Proposition \ref{diff1}.

 Given a polynomial $\theta(v_{1}, \cdots, v_{n})$, we change variables to rewrite it as $\tilde{\theta}(v_{1}-v_{n}, \cdots, v_{n-1}-v_{n}, v_{n})$.  \ref{splitPU} then tells us that $H^{*}(BPU_{n}; \mathbb{Q})$ is the free commutative ring generated by $\{v_{i}-v_{n}\}_{i=1}^{n-1}$. The equation
 \begin{equation*}
 \begin{split}
 &\qquad {^Td}_{3}(\theta(v_{1}, \cdots, v_{n}))\\
 &=\sum_{i=1}^{n}\frac{\partial}{\partial v_i}\tilde{\theta}(v_{1}-v_{n}, \cdots, v_{n-1}-v_{n}, v_{n})\\
 &=\frac{\partial}{\partial v_n}\tilde{\theta}(v_{1}-v_{n}, \cdots, v_{n-1}-v_{n}, v_{n})\\
 \end{split}
 \end{equation*}
 then implies that ${^Td}_{3}(\theta(v_{1}, \cdots, v_{n}))=0$ if and only if $\tilde{\theta}(v_{1}-v_{n}, \cdots, v_{n-1}-v_{n}, v_{n})$ is independent of $v_{n}$. For obvious degree reasons this is the only nontrivial differential over $\mathbb{Q}$, which implies that $H^{*}(BPT^n; \mathbb{Q})$, as a $\mathbb{Q}$-subalgebra of $H^{*}(BT^n; \mathbb{Q})$, is free commutative, generated by $\{v_{i}-v_{n}\}_{i=1}^{n-1}$. On the other hand, notice that the following homomorphism of Lie groups
 \begin{equation*}
 T^n\rightarrow S^1,
 \left( \begin{array}{ccc}
\lambda_{1} & 0 & \ldots \\
0 & \lambda_{2} & \ldots \\
\vdots & \vdots & \lambda_{n}
\end{array} \right)
 \mapsto\lambda_i\lambda_j^{-1}
 \end{equation*}
 factors through $PT^n$. This implies that $v_i-v_j\in H^{*}(BPT^n; \mathbb{Z})$, which proves (2).
\end{proof}

\begin{corollary}\label{diff4}
 $^{U}d_{3}(c_{k})=\nabla(c_{k})x_1=(n-k+1)c_{k-1}x_1.$
\end{corollary}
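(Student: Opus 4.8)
The plan is to obtain the formula by naturality of the cohomology Serre spectral sequence along the lower square of the diagram \eqref{diag}, reducing it to the torus fibration where it was computed in Proposition \ref{diff2}, and then reading off the answer using the injectivity supplied by the splitting principle.

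First I would note that, since $H^{*}(\mathbf{B}U_{n};\mathbb{Z})$ and $H^{*}(\mathbf{B}T^{n};\mathbb{Z})$ vanish in odd degrees, the $E_{2}$-terms $^{U}E_{2}^{s,t}=H^{s}(K(\mathbb{Z},3);H^{t}(\mathbf{B}U_{n};\mathbb{Z}))$ and $^{T}E_{2}^{s,t}$ are zero unless $t$ is even; hence $d_{2}=0$ in both spectral sequences and $^{U}E_{3}^{s,t}={}^{U}E_{2}^{s,t}$, $^{T}E_{3}^{s,t}={}^{T}E_{2}^{s,t}$. In particular $c_{k}$ survives to $^{U}E_{3}^{0,2k}$ and $^{U}d_{3}(c_{k})$ is a well-defined element of $^{U}E_{3}^{3,2k-2}$. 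Because the base map in the lower square of \eqref{diag} is the identity of $K(\mathbb{Z},3)$ while the fiber map is $\mathbf{B}\psi$, the induced morphism of spectral sequences $^{U}E_{*}^{*,*}\to{}^{T}E_{*}^{*,*}$ is, on the $(3,2k-2)$ slot, the map $\mathbb{Z}x_{1}\otimes H^{2k-2}(\mathbf{B}U_{n})\to\mathbb{Z}x_{1}\otimes H^{2k-2}(\mathbf{B}T^{n})$ induced by $\mathbf{B}\psi^{*}$ (using $H^{3}(K(\mathbb{Z},3))=\mathbb{Z}x_{1}$ and that $H^{2k-2}$ of both spaces is finitely generated free, so the $E_{2}$-terms split as these tensor products); by the splitting principle $\mathbf{B}\psi^{*}$ is injective, so this comparison map is injective.

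Next I would apply the comparison map to $^{U}d_{3}(c_{k})$: commutation with differentials together with $\mathbf{B}\psi^{*}(c_{k})=\sigma_{k}$ show that it is sent to $^{T}d_{3}(\sigma_{k})$, which by Proposition \ref{diff2}(1) equals $\nabla(\sigma_{k})\cdot x_{1}$. The identity $\partial\sigma_{k}/\partial v_{i}=\sigma_{k-1}(v_{1},\dots,\widehat{v_{i}},\dots,v_{n})$, combined with the observation that each square-free degree-$(k-1)$ monomial in $v_{1},\dots,v_{n}$ occurs in exactly $n-(k-1)$ of these partial derivatives, gives $\nabla(\sigma_{k})=\sum_{i=1}^{n}\partial\sigma_{k}/\partial v_{i}=(n-k+1)\sigma_{k-1}$. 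Hence $^{U}d_{3}(c_{k})$ and $(n-k+1)c_{k-1}x_{1}$ have the same image $(n-k+1)\sigma_{k-1}x_{1}$ under the (injective, multiplicative) comparison map, using $\mathbf{B}\psi^{*}(c_{k-1})=\sigma_{k-1}$ and that $x_{1}$ is fixed, and therefore they are equal.

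The argument meets essentially no obstacle: naturality of the Serre spectral sequence reduces the statement to Proposition \ref{diff2}, and the remaining content is the elementary identity $\nabla(\sigma_{k})=(n-k+1)\sigma_{k-1}$. The only place calling for a small amount of care is the injectivity of the comparison map on $^{U}E_{3}^{3,2k-2}$, which is why one first checks $d_{2}=0$ (so that this group is literally $\mathbb{Z}x_{1}\otimes H^{2k-2}(\mathbf{B}U_{n})$) before invoking injectivity of $\mathbf{B}\psi^{*}$.
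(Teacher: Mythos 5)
Your proposal is correct and follows essentially the same route as the paper: compare $^{U}E_{*}^{*,*}$ with $^{T}E_{*}^{*,*}$ via the splitting principle ($c_k\mapsto\sigma_k$), apply the formal divergence formula of Proposition \ref{diff2}, and use the elementary identity $\nabla(\sigma_k)=(n-k+1)\sigma_{k-1}$. The paper states this as a one-line computation; your added checks ($d_2=0$ and injectivity of the comparison map on the $(3,2k-2)$ slot) are exactly the justifications left implicit there.
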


\begin{proof}
 \[^{U}d_{3}(c_k)={^Td}_{3}(\sigma_k)=\nabla(c_{k})=\sum_{i=1}^{n}\frac{\partial\sigma_{k}}{\partial v_i}=(n-k+1)c_{k-1}x_1.\]
\end{proof}
We will study the operator $\nabla$ in more details in Section \ref{On Primitive}.

We proceed to study $^{T}d_{r}^{*,*}$ for greater $r$.

Consider a map $\kappa: T^{n}\rightarrow PT^{n}\times S^1$ defined as follows:
\begin{equation*}
\mathbf{}
\left( \begin{array}{ccc}
\lambda_{1} & 0 & \ldots \\
0 & \lambda_{2} & \ldots \\
\vdots & \vdots & \lambda_{n}
\end{array} \right)
\mapsto
\Big(
\mathbf{}
\left[ \begin{array}{ccc}
\lambda_{1} & 0 & \ldots \\
0 & \lambda_{2} & \ldots \\
\vdots & \vdots & \lambda_{n}
\end{array} \right],
\lambda_{n} \Big)
\end{equation*}
where the matrix in the square bracket denotes its class in $PT^{n}$. Then $\kappa$ is an isomorphism of Lie groups, its inverse being the following:
\begin{equation*}
\Big(
\mathbf{}
\left[ \begin{array}{ccc}
\lambda_{1} & 0 & \ldots \\
0 & \lambda_{2} & \ldots \\
\vdots & \vdots & \lambda_{n}
\end{array} \right],
\lambda \Big)
\mapsto
\mathbf{}
\left( \begin{array}{ccc}
\lambda\lambda_{1}/\lambda_{n} & 0 & \ldots \\
0 & \lambda\lambda_{2}/\lambda_{n} & \ldots \\
\vdots & \vdots & \lambda
\end{array} \right).
\end{equation*}
Passing to classifying spaces, we have $B\kappa:BT^{n}\simeq BPT^{n}\times BS^{1}$.
\begin{proposition}\label{tensor}
 Let $\theta(v_{1}, \cdots, v_{n})\in H^{2t}(BT^{n};\mathbb{Z})$ be an element in $^{T}E_{2}^{0,2t}$. As in the proof of Proposition \ref{diff2} we apply the change of variable $(v_1-v_n,\dots,v_{n-1}-v_n,v_n)$ and rewrite $\theta$ as
 $$\theta=\sum_{i=0}^{t}\theta_{i}v_{n}^{i}$$
 such that $\theta_{i}\in \operatorname{Ker}\nabla$, $i=0,\cdots, t$. Then $$(B\kappa^{-1})^{*}(\theta)=\sum\theta_{i}\otimes v_{n}^{i}.$$
\end{proposition}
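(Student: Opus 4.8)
The plan is to exploit the functoriality of the classifying space functor together with the explicit formula for $\kappa$. First I would observe that composing $\kappa$ with the two projections $PT^{n}\times S^{1}\to PT^{n}$ and $PT^{n}\times S^{1}\to S^{1}$ gives, respectively, the quotient homomorphism $q\colon T^{n}\to PT^{n}$ and the projection $\rho_{n}\colon T^{n}\to S^{1}$ onto the $n$-th diagonal entry. Applying $\mathbf{B}$ and using $\mathbf{B}(PT^{n}\times S^{1})\simeq\mathbf{B}PT^{n}\times\mathbf{B}S^{1}$, the map $\mathbf{B}\kappa$ is identified with $(\mathbf{B}q,\mathbf{B}\rho_{n})\colon\mathbf{B}T^{n}\to\mathbf{B}PT^{n}\times\mathbf{B}S^{1}$. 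Consequently, under the Künneth isomorphism $H^{*}(\mathbf{B}PT^{n}\times\mathbf{B}S^{1})\cong H^{*}(\mathbf{B}PT^{n})\otimes H^{*}(\mathbf{B}S^{1})$ (an isomorphism because $H^{*}(\mathbf{B}S^{1})$ is torsion free), we have $(\mathbf{B}\kappa)^{*}(a\otimes b)=(\mathbf{B}q)^{*}(a)\cdot(\mathbf{B}\rho_{n})^{*}(b)$ for all $a\in H^{*}(\mathbf{B}PT^{n})$, $b\in H^{*}(\mathbf{B}S^{1})$. Since $\kappa$ is a homeomorphism, $\mathbf{B}\kappa$ is a homotopy equivalence and $(\mathbf{B}\kappa^{-1})^{*}=\big((\mathbf{B}\kappa)^{*}\big)^{-1}$.

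Next I would make the two pullback maps explicit. As $\rho_{n}$ is the $n$-th coordinate projection, $(\mathbf{B}\rho_{n})^{*}$ sends the polynomial generator of $H^{*}(\mathbf{B}S^{1})$ to $v_{n}\in H^{*}(\mathbf{B}T^{n})=\mathbb{Z}[v_{1},\dots,v_{n}]$. Using the isomorphism $PT^{n}\cong T^{n-1}$, $[\lambda_{1},\dots,\lambda_{n}]\mapsto(\lambda_{1}/\lambda_{n},\dots,\lambda_{n-1}/\lambda_{n})$, the homomorphism $q$ becomes $(\lambda_{i})\mapsto(\lambda_{i}/\lambda_{n})_{i<n}$, so $(\mathbf{B}q)^{*}$ carries the polynomial generators of $H^{*}(\mathbf{B}PT^{n})$ to $v_{1}-v_{n},\dots,v_{n-1}-v_{n}$. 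Hence $(\mathbf{B}q)^{*}$ is injective with image $\mathbb{Z}[v_{1}-v_{n},\dots,v_{n-1}-v_{n}]$, which by Proposition~\ref{diff2}(2) is precisely $\textrm{Ker}\,\nabla$. From here on I identify $H^{*}(\mathbf{B}PT^{n})$ with $\textrm{Ker}\,\nabla\subseteq H^{*}(\mathbf{B}T^{n})$ along $(\mathbf{B}q)^{*}$; this is the abuse of notation implicit in the statement, and it is compatible with the decomposition $\theta=\sum_{i}\theta_{i}v_{n}^{i}$ since $\{v_{1}-v_{n},\dots,v_{n-1}-v_{n},v_{n}\}$ is a polynomial generating system of $H^{*}(\mathbf{B}T^{n})$, so that $H^{*}(\mathbf{B}T^{n})=(\textrm{Ker}\,\nabla)[v_{n}]$ and the coefficients $\theta_{i}$ are uniquely determined.

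The conclusion is then a one-line computation. Interpreting each $\theta_{i}\in\textrm{Ker}\,\nabla$ as an element of $H^{*}(\mathbf{B}PT^{n})$ via $(\mathbf{B}q)^{*}$, we have
$$(\mathbf{B}\kappa)^{*}\Big(\sum_{i=0}^{t}\theta_{i}\otimes v_{n}^{i}\Big)=\sum_{i=0}^{t}(\mathbf{B}q)^{*}(\theta_{i})\cdot\big((\mathbf{B}\rho_{n})^{*}(v_{n})\big)^{i}=\sum_{i=0}^{t}\theta_{i}v_{n}^{i}=\theta.$$
Applying $(\mathbf{B}\kappa^{-1})^{*}=\big((\mathbf{B}\kappa)^{*}\big)^{-1}$ to both sides yields $(\mathbf{B}\kappa^{-1})^{*}(\theta)=\sum_{i=0}^{t}\theta_{i}\otimes v_{n}^{i}$, as desired.

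There is no real difficulty in this argument; the only thing demanding attention is bookkeeping — keeping straight the three roles played by the symbol $v_{n}$ (the generator of the $\mathbf{B}S^{1}$ factor, the $n$-th polynomial variable of $H^{*}(\mathbf{B}T^{n})$, and their common value under the respective pullbacks) — and citing Proposition~\ref{diff2}(2) for the identification $\textrm{im}\,(\mathbf{B}q)^{*}=\textrm{Ker}\,\nabla$ rather than re-deriving it.
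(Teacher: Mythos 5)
Your proof is correct: the paper states Proposition \ref{tensor} without proof, treating it as immediate from the explicit formula for $\kappa$, and your argument supplies exactly the intended justification — identifying $\mathbf{B}\kappa$ with $(\mathbf{B}q,\mathbf{B}\rho_{n})$, computing the two pullbacks ($(\mathbf{B}q)^{*}$ onto $\mathbb{Z}[v_{1}-v_{n},\dots,v_{n-1}-v_{n}]=\textrm{Ker}\,\nabla$ and $(\mathbf{B}\rho_{n})^{*}$ hitting $v_{n}$), and inverting via the Künneth isomorphism. The attention you pay to the uniqueness of the coefficients $\theta_{i}$ in $H^{*}(\mathbf{B}T^{n})=(\textrm{Ker}\,\nabla)[v_{n}]$ is a worthwhile addition the paper leaves implicit.
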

Consider the fiber sequence $BPT^{n}\rightarrow BPT^{n} \rightarrow *$ and let $^{P}E_{*}^{*,*}$ be its cohomological Serre Spectral sequence. We take the product of fiber sequences
\[(BPT^{n}\rightarrow BPT^{n} \rightarrow *)\times (BS^1\rightarrow * \rightarrow K(\mathbb{Z},3))\]
or equivalently
\[BPT^{n}\times BS^1\rightarrow BPT^{n}\rightarrow K(\mathbb{Z},3).\]
Then we have the following morphism between fiber sequences:
\begin{equation*}
\begin{CD}
    BT^{n}       @>>>   BPT^{n}               @>>>  K(\mathbb{Z},3)\\
          @VV B\kappa V      @|                        @|\\
    BPT^{n}\times BS^1          @>>> BPT^{n}     @>>>   K(\mathbb{Z},3)\\
\end{CD}
\end{equation*}
with the unspecified maps being the obvious ones. This diagram is easily seen to be commutative by the discussion above.

\begin{proposition}\label{diff3}
Let $^{P}E_{*}^{*,*}$ be the cohomological Serre Spectral sequence associated to $BPT^{n}\rightarrow BPT^{n} \rightarrow *$. Then the commutative diagram above induces an isomorphism of spectral sequences $^{T}E_{r}^{*,*}\cong {^{P}E}_{r}^{*,*}\otimes_{\Z} {^{K}E}_{r}^{*,*}$ for $r\geq 2$.
\end{proposition}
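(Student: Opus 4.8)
The plan is to deduce Proposition \ref{diff3} from the morphism of fiber sequences displayed just above the statement, in two steps: first I would identify $^{T}E_{*}^{*,*}$ with the Serre spectral sequence $^{T'}E_{*}^{*,*}$ of the product fibration $\mathbf{B}PT^{n}\times\mathbf{B}S^1\to\mathbf{B}PT^{n}\to K(\mathbb{Z},3)$, and then I would split $^{T'}E$ as a tensor product by a K\"unneth argument.

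For the first step, the displayed ladder is a morphism of fiber sequences that is the identity on the base $K(\mathbb{Z},3)$ and on the total space $\mathbf{B}PT^{n}$, and that on fibers is the homotopy equivalence $\mathbf{B}\kappa\colon \mathbf{B}T^{n}\xrightarrow{\simeq}\mathbf{B}PT^{n}\times\mathbf{B}S^1$. By naturality of the Serre spectral sequence it induces a morphism $^{T'}E_{r}^{*,*}\to {}^{T}E_{r}^{*,*}$; since $K(\mathbb{Z},3)$ is simply connected all local coefficient systems are trivial, so on $E_2$-pages this morphism is $H^{*}(K(\mathbb{Z},3);-)$ applied to the isomorphism $(\mathbf{B}\kappa)^{*}$, hence an isomorphism. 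A morphism of spectral sequences that is an isomorphism on some page is an isomorphism on all later pages, so $^{T}E_{r}\cong {}^{T'}E_{r}$ for $r\geq 2$, with the identification of fiber cohomologies made explicit by Proposition \ref{tensor}.

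For the second step, $^{T'}E$ is the Serre spectral sequence of the product of the fibrations $\mathbf{B}PT^{n}\to\mathbf{B}PT^{n}\to *$ and $\mathbf{B}S^1\to *\to K(\mathbb{Z},3)$. The external (cross) product of cochains respects the two Serre filtrations, by the elementary count that the product of an $i$-cell of one base with a $j$-cell of the other, with $i\geq p$ and $j\geq q$, has dimension $\geq p+q$; hence it induces a morphism of multiplicative spectral sequences $^{P}E_{r}^{*,*}\otimes{}^{K}E_{r}^{*,*}\to {}^{T'}E_{r}^{*,*}$. On $E_2$-pages this is the K\"unneth map $H^{*}(\mathbf{B}PT^{n})\otimes H^{*}(K(\mathbb{Z},3);H^{*}(\mathbf{B}S^1))\to H^{*}(K(\mathbb{Z},3);H^{*}(\mathbf{B}PT^{n})\otimes H^{*}(\mathbf{B}S^1))$, which is an isomorphism because $H^{*}(\mathbf{B}PT^{n})=\mathbb{Z}[t_1,\dots,t_{n-1}]$ and $H^{*}(\mathbf{B}S^1)=\mathbb{Z}[v]$ are free $\mathbb{Z}$-modules, finitely generated in each degree, so no $\mathrm{Tor}$ terms arise; in particular the torsion of $H^{*}(K(\mathbb{Z},3))$ only ever appears tensored with these free modules. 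Propagating this $E_2$-isomorphism forward and composing with the identification of the first step yields $^{T}E_{r}^{*,*}\cong {}^{P}E_{r}^{*,*}\otimes{}^{K}E_{r}^{*,*}$ for all $r\geq 2$, compatibly with the multiplicative structures.

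I expect the main obstacle to be precisely this K\"unneth step: verifying that the external product of cochains is genuinely filtration-compatible, that it induces the honest K\"unneth map on $E_2$, and that the abundant torsion in $H^{*}(K(\mathbb{Z},3))$ introduces no $\mathrm{Tor}$ correction at any page --- which is exactly where the polynomiality (freeness, degreewise finiteness) of $H^{*}(\mathbf{B}PT^{n})$ and $H^{*}(\mathbf{B}S^1)$ is used. Everything past the $E_2$-level is then formal, via the standard persistence of an isomorphism of spectral sequences to all later pages, so no further computation is needed.
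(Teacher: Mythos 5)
Your proof is correct and follows essentially the same route as the paper: the paper's one-line proof likewise uses the morphism of fiber sequences induced by $\mathbf{B}\kappa$ (via Proposition \ref{tensor}) to obtain an isomorphism on an early page and then invokes the standard comparison theorem (Theorem 3.4 of \cite{Mc}) to propagate it to all later pages. The only difference is that you spell out the K\"unneth identification of the product fibration's spectral sequence with ${}^{P}E_{r}^{*,*}\otimes{}^{K}E_{r}^{*,*}$, justified by the freeness and degreewise finite generation of $H^{*}(\mathbf{B}PT^{n})$ and $H^{*}(\mathbf{B}S^1)$, a step the paper leaves implicit.
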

\begin{proof}
By Proposition \ref{tensor} the commutative diagram above induces an isomorphism of $E_3$ pages, and the statement follows from Theorem 3.4 in \cite{Mc} (McCleary, 2001).
\end{proof}

\begin{remark}
Let $\theta(v_{1}, \cdots, v_{n})\in H^{2t}(BT^{n};\mathbb{Z})$ represent an element in $^{T}E_{2}^{0,2t}$, $\xi$ be an element of $H^{s}(K(\mathbb{Z}, 3);\mathbb{Z})$ for some $s$. Suppose we have $\theta\xi\in {^{T}E}_{r}^{s,*}$. Applying the change of variables $(v_1-v_n,\dots,v_{n-1}-v_n,v_n)$ we rewrite $\theta$ as
 \[\theta=\sum_{i=0}^{t}m_{i}v_{n}^{i}\theta'_{i}\]
 where $m_{i}v_{n}^{i}\xi\in {^{T}E}_{r}^{*,*}$, $\theta'_{i}\in \operatorname{Ker}\nabla$, $i=0,\cdots, t$ for all $i$, such that $\theta'_{i}$ are primitive polynomials in the polynomial ring $\mathbb{Z}[v_1-v_n,\dots,v_{n-1}-v_n]$, i.e., only $\pm1$ divides all of its coefficients.  In particular, this is just the change of variable considered in Proposition \ref{tensor} where we have $\theta_{i}=m_{i}\theta'_{i}$. The differential ${^{T}d}_{r}^{3,*}$ is hence determined by
 \[{^{T}d}_{r}(\theta\xi)={^{T}d}_{r}(\sum_{i=0}^{t}m_{i}v_{n}^{i}\theta'_{i}\xi)
 =\sum_{i=0}^{t}{^{T}d}_{r}(m_{i}v_{n}^{i}\xi)\theta'_{i},\]
 where ${^{T}d}_{r}(m_{i}v_{j}^{i}\xi)$ is determined as in Proposition \ref{diff1}. This is how we compute the differentials in practice. This idea is explained more concretely in Example \ref{example}.
\end{remark}

Finally we are at the place to state our main result of this section, of which the proof is already clear.
\begin{proposition}\label{main}
Proposition \ref{diff1}, Proposition \ref{diff2} and Proposition \ref{diff3} determine all of the differentials of ${^TE}_{*}^{*,*}$. Restricted to symmetric polynomials in $v_1, \cdots, v_n$, they determine all of the differentials ${^Ud}_{r}^{s-r,t+r-1}$ of ${^UE}_{*}^{*,*}$ such that for any $r'<r$, ${^Td}_{r'}^{s-r',t-r'+1}=0$.
\end{proposition}
With enough patience one can apply this apparatus to calculate $H^{k}(BPU_{n}; \mathbb{Z})$ for many $k$ and $n$ up to group extension. The interested reader may take the following example as an exercise.
\begin{example}\label{example}
 Let $n=3$. We will show that $^{U}d_{3}(2c_{3}x_{1})=0$ but $2c_{3}x_{1}$ is not a permanent cocycle.
 \begin{proof}
 By the splitting principle (16.13 Corollary, \cite{Sw}, Switzer, 1975) the image of $c_3$ in $H^*(BT^3;\mathbb{Z})$ is $v_1v_2v_3$, and in $^{T}E_{*}^{*,*}$ we have
 \[2v_{1}v_{2}v_{3}x_1=2(v_{1}-v_{2})(v_{2}-v_{3})x_1+2[(v_{1}-v_{3})+(v_{2}-v_{3})]v_{3}^{2}x_1+2v_{3}^{3}x_1,\]
 the first two terms on the right side being easily checked to be permanent cocycles. For the third term we have
 \[^{T}d_{3}(2v_{3}^{3}x_1)=6v_{3}^{2}x_1^2=6v_{3}^{2}y_{2,0}=0\]
 since $2y_{2,1}=0$. This verifies the first statement. For the second one, we have
 \[^{T}d_{5}(2v_{3}^{3}x_1)=0\]
 since $v_{3}^3$ has order $4$ in $^{T}E_{4}^{*,*}$ but $y_{3,1}$ is of order $3$. However
 \[^{T}d_{7}(2v_{3}^{3}x_1)=y_{2,1}\neq 0,\]
 whence the second statement.
 \end{proof}
\end{example}
We end this section by several corollaries. Let $\nabla$, $\xi$ be the same as earlier, and let $\vartheta=\vartheta(c_1, \cdots, c_n)\in H^{*}(BU_{n};\mathbb{Z})$.

\begin{corollary}\label{cor1}
 $^{U}d_{3}(\vartheta\xi)={^{U}d}_{3}(\vartheta)\xi=\nabla(\vartheta)x_{1}\xi.$
\end{corollary}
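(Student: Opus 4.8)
The statement to prove is Corollary \ref{cor1}: $^{U}d_{3}(\rho\xi) = {^{U}d}_3(\rho)\xi = \nabla(\rho)x_1\xi$.

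This follows almost immediately from the preceding results. The key input is Proposition \ref{diff2}(1), which identifies $^{T}d_3^{0,*}$ with $\nabla(-)\cdot x_1$, and the functoriality of the Serre spectral sequence under the map $\Psi$, which gives $^{U}d_3(\rho) = {^{T}d}_3(\mathbf{B}\psi^*(\rho))$. Combined with the splitting principle, $\mathbf{B}\psi^*(\rho)$ is the symmetric polynomial in $v_1,\dots,v_n$ obtained by substituting elementary symmetric polynomials for the Chern classes.

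The plan is:

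\begin{proof}
We work in $^{U}E_{*}^{*,*}$. The element $\xi\in {^{U}E}_{2}^{*,*}$ lies in the subalgebra coming from the base $K(\mathbb{Z},3)$, hence is a $d_2$-cycle; in fact $d_2 = 0$ for dimensional reasons (the fiber $\mathbf{B}U_n$ has cohomology concentrated in even degrees, so there is no room for $d_2$), and ${^{U}E}_2 = {^{U}E}_3$. Moreover $\xi$ survives to $^{U}E_{3}^{*,*}$ and, being a class pulled back from the base, satisfies $^{U}d_3(\xi) = 0$ (the base classes are permanent cocycles in the fibration $\mathbf{B}U_n\to\mathbf{B}PU_n\to K(\mathbb{Z},3)$). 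Since $^{U}d_3$ is a derivation (the Leibniz rule holds for cohomological Serre spectral sequences), we obtain
$$^{U}d_{3}(\rho\xi) = {^{U}d}_{3}(\rho)\xi + (-1)^{\deg\rho}\rho\cdot{^{U}d}_3(\xi) = {^{U}d}_{3}(\rho)\xi.$$
It remains to compute $^{U}d_3(\rho)$. By the functoriality of the Serre spectral sequence applied to the morphism $\Psi$ of diagram \eqref{diag}, and the splitting principle which says $\mathbf{B}\psi^*$ sends $c_k$ to the $k$th elementary symmetric polynomial $\sigma_k$ in $v_1,\dots,v_n$, we have $^{U}d_3(\rho) = {^{T}d}_3(\rho(\sigma_1,\dots,\sigma_n))$, where we regard $\rho(\sigma_1,\dots,\sigma_n)\in H^*(\mathbf{B}T^n)$. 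By Proposition \ref{diff2}(1), $^{T}d_3 = \nabla(-)\cdot x_1$, so
$$^{U}d_{3}(\rho) = \nabla\big(\rho(\sigma_1,\dots,\sigma_n)\big)\,x_1.$$
Finally, $\nabla = \sum_i \partial/\partial v_i$ applied to a symmetric polynomial in $v_1,\dots,v_n$ again yields a symmetric polynomial, which by the splitting principle corresponds to an element of $H^*(\mathbf{B}U_n)$; tracking this identification, $\nabla(\rho(\sigma_1,\dots,\sigma_n))$ is precisely $\nabla(\rho)$ in the sense of the formal operation on Chern classes. (Concretely, this is already recorded in Corollary \ref{diff4}: $\nabla(\sigma_k) = (n-k+1)\sigma_{k-1}$ corresponds to $^{U}d_3(c_k) = (n-k+1)c_{k-1}x_1$, and the general case follows by the chain rule.) Combining the displays gives $^{U}d_{3}(\rho\xi) = \nabla(\rho)x_1\xi$, as claimed.
\end{proof}

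The main obstacle, such as it is, is purely bookkeeping: making precise the identification of $\nabla$ acting on symmetric polynomials with the induced ``formal divergence'' operation on the polynomial ring in the Chern classes, and confirming that $\xi$ is a $d_3$-cocycle. Both are routine consequences of the splitting principle and the structure of diagram \eqref{diag}, so I expect no genuine difficulty; the corollary is essentially an immediate packaging of Propositions \ref{diff1} and \ref{diff2} together with the Leibniz rule.
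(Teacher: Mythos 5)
Your proof is correct and follows the same route as the paper, which dispatches the corollary in one line as ``immediate from the Leibniz rule and Proposition \ref{diff2}''; you have simply expanded the details (that bottom-row classes are $d_3$-cocycles, and that $\nabla$ on symmetric polynomials matches the formal divergence on Chern classes via the splitting principle), all of which is accurate.
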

This is immediate from the Leibniz rule and Proposition \ref{diff2}.

\begin{corollary}\label{cor2}
If $\vartheta\xi\in {^{U}E}_{r}^{s, r-1}$, or in other words, the image of $\rho\xi$ of $^{U}d_{r}$ lies in bottom row, then
\[^{U}d_{r}(\vartheta\xi)={^{K}d}_{r}(B(\psi\cdot\varphi)^{*}(\vartheta)\xi),\]
where $\varphi$ and $\psi$ are as in the diagram (\ref{diag}). In particular,
\[^{U}d_{r}(c_{k}\xi)={n\choose k}{^{K}d}_{r}(v^{k}\xi).\]
\end{corollary}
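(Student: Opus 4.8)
The plan is to deduce Corollary \ref{cor2} directly from the functoriality of the Serre spectral sequence applied to the commutative diagram \eqref{diag}, together with the identification of $^{K}d_r$ already available from Section 3 (and the explicit formulas in Corollary \ref{diff0}). First I would observe that the vertical maps in \eqref{diag} assemble into a map of fiber sequences from the $K(\mathbb{Z},2)\to *\to K(\mathbb{Z},3)$ sequence down to the $\mathbf{B}U_n\to\mathbf{B}PU_n\to K(\mathbb{Z},3)$ sequence, inducing identity on the base $K(\mathbb{Z},3)$; composing the two vertical columns gives a morphism of spectral sequences $^{K}E_{r}^{*,*}\to {}^{U}E_{r}^{*,*}$, which on the fiber $E_2^{0,*}$ row is $\mathbf{B}(\psi\cdot\phi)^*: H^*(\mathbf{B}U_n)\to H^*(K(\mathbb{Z},2))$ read in the contravariant direction — more precisely the map of spectral sequences runs $^{K}E\to {}^{U}E$ on cohomology, so I should phrase it as: there is a map of spectral sequences $\Theta_r : {}^{K}E_r^{*,*}\to {}^{U}E_r^{*,*}$ commuting with all differentials, equal on the base row $E_r^{*,0}$ to the identity on $H^*(K(\mathbb{Z},3))$ (or its relevant subquotient), and on the fiber row induced by $\mathbf{B}(\psi\cdot\phi)$.

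Next I would use the hypothesis that $\rho\xi\in {}^{U}E_r^{s,r-1}$ survives to page $r$ with its $d_r$-image landing in the bottom row $E_r^{s+r,0}$. Because the bottom row of $^{U}E$ is a subquotient of $H^{*}(K(\mathbb{Z},3))$ and $\Theta_r$ is an isomorphism there (identity on $H^*(K(\mathbb{Z},3))$), I can transport the computation to $^{K}E_r$: the class $\mathbf{B}(\psi\cdot\phi)^*(\rho)\cdot\xi$ is a lift of $\rho\xi$ under $\Theta_r$, and naturality of $d_r$ gives $\Theta_r({}^{K}d_r(\mathbf{B}(\psi\cdot\phi)^*(\rho)\xi)) = {}^{U}d_r(\rho\xi)$. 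Since $\Theta_r$ restricted to the bottom row is the identity, this yields exactly the claimed formula $^{U}d_r(\rho\xi) = {}^{K}d_r(\mathbf{B}(\psi\cdot\phi)^*(\rho)\xi)$. For the special case $\rho = c_k$, I would invoke the splitting principle recalled in \eqref{splitPU} and the surrounding paragraph: $\mathbf{B}\psi^*(c_k) = \sigma_k$, and then $\mathbf{B}\phi^*$ sends each $v_i$ to the single generator $v$ of $H^*(\mathbf{B}S^1)$, so $\mathbf{B}(\psi\cdot\phi)^*(c_k) = \sigma_k(v,\dots,v) = \binom{n}{k} v^k$; substituting gives $^{U}d_r(c_k\xi) = {}^{K}d_r(\binom{n}{k}v^k\xi) = \binom{n}{k}\,{}^{K}d_r(v^k\xi)$ by linearity of the differential.

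The one genuine subtlety — and the step I expect to need the most care — is justifying that $\mathbf{B}(\psi\cdot\phi)^*(\rho)\cdot\xi$ actually represents a class on the $E_r$-page of $^{K}E$ lying over $\rho\xi$, i.e. that the hypothesis "$\rho\xi$ survives to $^{U}E_r$ and $d_r$ hits the bottom row" pulls back to the corresponding statement in $^{K}E$. This is where I would lean on Corollary \ref{splitE2} and Corollary \ref{diff0}: those results pin down the bottom-row image of every $d_r$ in $^{K}E$ and show each monomial subgroup is hit exactly once, so the bottom-row classes in $^{U}E$ that can possibly appear as $d_r$-images are precisely the images of bottom-row classes in $^{K}E$ under $\Theta_r$; combined with the fact that $\Theta_r$ is injective on the fiber row in the relevant bidegrees (again traceable to the explicit description of $H^*(\mathbf{B}U_n)\to H^*(\mathbf{B}S^1)$ on the classes in play), the lift exists and is unique enough for the argument. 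I would state this carefully but not belabor it, since it is really a bookkeeping consequence of the structural results already proved. Finally, I note the hypothesis on $^{K}d_{r'}$ vanishing for $r'<r$ implicit in Theorem \ref{main} guarantees we are not conflating contributions from earlier pages, so the identification of $E_r$-classes across $\Theta_r$ is unambiguous.
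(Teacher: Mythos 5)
Your core idea --- functoriality of the Serre spectral sequence applied to diagram (\ref{diag}), the identity on the base $K(\mathbb{Z},3)$, and the splitting-principle computation $\mathbf{B}(\psi\cdot\phi)^{*}(c_k)=\binom{n}{k}v^k$ --- is exactly the paper's argument, which is a three-line appeal to naturality. But your write-up has the morphism of spectral sequences pointing the wrong way, and this creates a spurious difficulty that then consumes your entire third paragraph. The maps of spaces in (\ref{diag}) go \emph{from} the fibration $K(\mathbb{Z},2)\to *\to K(\mathbb{Z},3)$ \emph{to} $\mathbf{B}U_n\to\mathbf{B}PU_n\to K(\mathbb{Z},3)$, so on cohomological Serre spectral sequences the induced morphism runs $\Theta_r\colon {}^{U}E_r^{*,*}\to {}^{K}E_r^{*,*}$ --- consistent with the fiber-row map $\mathbf{B}(\psi\cdot\phi)^{*}\colon H^{*}(\mathbf{B}U_n)\to H^{*}(K(\mathbb{Z},2))$ that you yourself write down, but opposite to the direction ${}^{K}E\to{}^{U}E$ you settle on. With the correct direction there is nothing to lift: $\Theta_r(\rho\xi)=\mathbf{B}(\psi\cdot\phi)^{*}(\rho)\,\xi$ is simply the \emph{image} of $\rho\xi$, it automatically lives on the $r$-th page of ${}^{K}E$, and naturality gives $\Theta_r({}^{U}d_r(\rho\xi))={}^{K}d_r(\mathbf{B}(\psi\cdot\phi)^{*}(\rho)\xi)$; since $\Theta_r$ is induced by the identity on the bottom row, this is the claimed formula.

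As written, though, your second and third paragraphs do not go through: you ask for $\mathbf{B}(\psi\cdot\phi)^{*}(\rho)\xi$ to be a \emph{preimage} of $\rho\xi$ under a map ${}^{K}E\to{}^{U}E$, which would require a natural homomorphism $H^{*}(K(\mathbb{Z},2))\to H^{*}(\mathbf{B}U_n)$ sending $v^k$ to (a rational multiple of) $c_k$; no such map is available, and the appeals to Corollaries \ref{splitE2} and \ref{diff0} do not manufacture one. Those corollaries are not needed for this statement at all. Fix the direction and your argument collapses to the paper's: the only content beyond naturality is the computation $\mathbf{B}\psi^{*}(c_k)=\sigma_k$ followed by $\mathbf{B}\varphi^{*}(v_i)=v$, giving $\binom{n}{k}v^k$, which you do have right.
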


\begin{proof}
Consider the commutative diagram (\ref{diag}) at the beginning of this section. The result is immediate upon passing to the diagram of Serre spectral sequences: $\Psi\cdot\Phi$ induces $c_{k}\mapsto {n\choose k}v^k$ on the leftmost column and the identity on the rightmost one, the latter implying that the induced morphism of spectral sequences is the identity on $H^{*}(K(\mathbb{Z},3);\mathbb{Z})$ when restricted to the bottom rows of the $E_2$-page.
\end{proof}

\section{Auxiliary Results, Proof of Theorem \ref{2p+2}}\label{Auxiliary}
In the statement of Theorem \ref{maincalc}, the notations $x_1$, $y_{2,1}$ and $y_{3,0}$ are also used to denote elements in the cohomology group $H^*(K(\mathbb{Z},3);\mathbb{Z})$. This is on purpose, since as we shall see in the last three sections, $x_1$, $y_{2,1}$ and $y_{3,0}\in H^{*}(BPU_{n}; \mathbb{Z})$ are the images of their namesakes in
$H^*(K(\mathbb{Z},3);\mathbb{Z})$, under the homomorphism $H^{*}(K(\mathbb{Z},3); \mathbb{Z})\rightarrow H^{*}(BPU_{n}; \mathbb{Z})$, which is induced by the map $BPU_{n}\xrightarrow{\chi} K(\mathbb{Z}, 3)$ as in the fiber sequence (\ref{EMfibration}). We restate this fact as the following
\begin{corollary}\label{cohomology operations}
\begin{enumerate}
 \item
 When $n$ is even, the $2$-torsion subgroup of $H^{k}(BPU_{n}; \mathbb{Z})$ for $k=6, 9, 10$ are generated by the cohomology operations $y_{2,0}, y_{2, 0}x_{1}$, and $y_{2,1}$ respectively, applied to a generator of $H^{3}(BPU_{n}; \mathbb{Z})$.
\item When $3|n$, and $k=8$, the $3$-torsion subgroup of $H^{8}(BPU_{n}; \mathbb{Z})$ is generated by the cohomology operation $y_{3,0}$ applied to a generator of $H^{3}(BPU_{n}; \mathbb{Z})$.
\end{enumerate}
\end{corollary}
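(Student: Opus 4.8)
The plan is to read the corollary off from Theorem \ref{maincalc} together with one structural fact about the structure map $f\colon\mathbf{B}PU_n\to K(\mathbb{Z},3)$ of \eqref{EMfibration}. Since $H^1(\mathbf{B}U_n;\mathbb{Z})=H^3(\mathbf{B}U_n;\mathbb{Z})=0$ and $H^1(K(\mathbb{Z},3);\mathbb{Z})=H^2(K(\mathbb{Z},3);\mathbb{Z})=0$, the only contribution to $H^3(\mathbf{B}PU_n;\mathbb{Z})$ in $^{U}E_{*}^{*,*}$ is the bottom-row term $^{U}E_{\infty}^{3,0}$, so the degree-$3$ edge homomorphism presents $f^*$ as a surjection $H^3(K(\mathbb{Z},3);\mathbb{Z})\to H^3(\mathbf{B}PU_n;\mathbb{Z})\cong\mathbb{Z}/n$ carrying the degree-$3$ generator of $H^*(K(\mathbb{Z},3);\mathbb{Z})$ (the class called $x_1$ in Section 2) onto the generator $x_1$ of Theorem \ref{maincalc}. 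Because an integral cohomology operation $H^3(-;\mathbb{Z})\to H^k(-;\mathbb{Z})$ is, by representability, an element of $H^k(K(\mathbb{Z},3);\mathbb{Z})$ whose value on a class classified by $g\colon X\to K(\mathbb{Z},3)$ is $g^*$ of that element, the phrase ``$y$ applied to the generator of $H^3(\mathbf{B}PU_n;\mathbb{Z})$'' means exactly $f^*(y)$. So the corollary is equivalent to two assertions: (a) for $(k,p)\in\{(6,2),(9,2),(10,2)\}$ with $n$ even, and for $(k,p)=(8,3)$ with $3\mid n$, the $p$-torsion subgroup of $H^k(\mathbf{B}PU_n;\mathbb{Z})$ is cyclic; and (b) it is generated respectively by $f^*(y_{2,0})$, $f^*(y_{2,0}x_1)$, $f^*(y_{2,1})$, and $f^*(y_{3,0})$, where $y_{2,0},y_{3,0},y_{2,1}\in H^*(K(\mathbb{Z},3);\mathbb{Z})$ are the generators of Corollary \ref{K(Z,3)}.

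Assertion (a) I would obtain from Theorem \ref{maincalc} by a brief inspection of the degree-$k$ monomials in the listed generators and the relations generating $I_n$, discarding the torsion-free summands (which by Remark \ref{torsion free} occur only in even degrees and are irrelevant here): in degree $6$ the only torsion class is $x_1^2$, of order $\gcd(2,n)$; in degree $9$ the $2$-primary part is generated by $x_1^3$ (the other torsion class $z_1$ being $3$-primary); in degree $8$ the only torsion class is $y_{3,0}$, of order $\gcd(3,n)$; and in degree $10$ the $2$-torsion is generated by $y_{2,1}$, since the only other candidate $e_2x_1^2$ vanishes when $\delta(n)=1$ (because then $e_2x_1=0$) and coincides with $y_{2,1}$ when $\delta(n)=2$. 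Thus in each case the relevant $p$-torsion is a single $\mathbb{Z}/p$ with the indicated generator, non-zero precisely under the stated divisibility condition (and, consistently, absent otherwise, e.g.\ by Lemma \ref{simplification}).

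For assertion (b) the cases $k=6$ and $k=9$ are immediate from multiplicativity of $f^*$: by Corollary \ref{K(Z,3)} one has $y_{2,0}=x_1^2$ and $y_{2,0}x_1=x_1^3$ already in $H^*(K(\mathbb{Z},3);\mathbb{Z})$, so $f^*(y_{2,0})=x_1^2$ and $f^*(y_{2,0}x_1)=x_1^3$, the generators found in (a). The cases $k=8$ and $k=10$ are the substantive ones, because $y_{3,0}$ and $y_{2,1}$ are not polynomials in $x_1$. Here the edge homomorphism factors $f^*$ as $H^k(K(\mathbb{Z},3);\mathbb{Z})={}^{U}E_{2}^{k,0}\to{}^{U}E_{\infty}^{k,0}\hookrightarrow H^k(\mathbf{B}PU_n;\mathbb{Z})$ with the first map onto, so it suffices to show that $y_{3,0}\in{}^{U}E_{2}^{8,0}$ (when $3\mid n$) and $y_{2,1}\in{}^{U}E_{2}^{10,0}$ (when $n$ is even) survive to $E_{\infty}$ with non-zero image. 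No differential leaves the bottom row, so this reduces to checking that no differential hits them; Corollaries \ref{cor2} and \ref{cor3} allow one to carry this out by comparison with the fully understood spectral sequences $^{K}E_{*}^{*,*}$ and $^{T}E_{*}^{*,*}$, and the outcome -- part of the computation of the last three sections -- is that $^{U}E_{\infty}^{8,0}$ and $^{U}E_{\infty}^{10,0}$ account for exactly the $\mathbb{Z}/3$ and $\mathbb{Z}/2$ identified in (a). As a cross-check when $n=4l+2$ one may instead reduce modulo $p$: $y_{2,1}$ reduces mod $2$ to $(Sq^2 x_1)^2$ in $H^{10}(K(\mathbb{Z},3);\mathbb{Z}/2)$, hence $f^*(y_{2,1})$ reduces to $(Sq^2 a_3)^2$, whose non-vanishing one verifies from Lemma \ref{Sq^1} and the Kono--Mimura description, and similarly for $y_{3,0}$ at the prime $3$.

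The main obstacle is assertion (b) in degrees $8$ and $10$. The $x_1$-power cases are formal, but for the genuinely new operations $y_{3,0}$ and $y_{2,1}$ one needs enough of the differentials of $^{U}E_{*}^{*,*}$ to conclude that the bottom-row groups $^{U}E_{\infty}^{8,0}$ and $^{U}E_{\infty}^{10,0}$ are non-trivial and carry all of the $p$-torsion in those degrees -- equivalently, that $f^*$ does not annihilate $y_{3,0}$ or $y_{2,1}$. This is precisely the bookkeeping performed in the computational sections; granting it, the present corollary is simply a reinterpretation of the result in the language of cohomology operations on the generator of $H^3(\mathbf{B}PU_n;\mathbb{Z})$.
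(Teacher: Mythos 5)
Your proposal is correct and follows essentially the same route as the paper: the paper's own proof simply reads off from Theorem \ref{maincalc} that the relevant torsion subgroups are generated by the images of $y_{2,0}$, $y_{2,0}x_{1}$, $y_{2,1}$, $y_{3,0}$ under the map $H^{k}(K(\mathbb{Z},3);\mathbb{Z})\rightarrow H^{k}(\mathbf{B}PU_{n};\mathbb{Z})$ induced by the fibration (\ref{EMfibration}). Your write-up just makes explicit the points the paper leaves implicit (the identification of ``operation applied to the generator of $H^{3}$'' with $f^{*}$ via representability, the relations $y_{2,0}=x_{1}^{2}$ and $y_{2,0}x_{1}=x_{1}^{3}$, and the survival of the bottom-row classes established in the computational sections), which is a faithful expansion rather than a different argument.
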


\begin{proof}
  In the statement of Theorem \ref{maincalc}, we see that for the $k$'s and $n$'s in the Corollary \ref{cohomology operations}, the corresponding torsion subgroups of $H^{k}(BPU_{n}; \mathbb{Z})$ are generated by the images of elements $y_{2,0}, y_{2, 0}x_{1},y_{2,1}$ and $y_{3,0}$ respectively, under the homomorphism $H^{k}(K(\mathbb{Z},3); \mathbb{Z})\xrightarrow{\chi^*} H^{k}(BPU_{n}; \mathbb{Z})$, and the proof is completed.
\end{proof}

We proceed to study the $p$-local components of $H^{*}(BPU_{n}; \mathbb{Z})$, for any prime number $p$.

\begin{lemma}\label{p-local}
If $t+1<p$, then
\[^{U}E_{\infty}^{3, 2t}\otimes_{\Z}\mathbb{Z}_{(p)}\cong {^{U}E}_{3}^{3, 2t}/\operatorname{Im}({^{U}d}_{3}^{0, 2t+2})\otimes\mathbb{Z}_{(p)}.\]
\end{lemma}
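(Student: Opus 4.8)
The plan is to show that in the $p$-local spectral sequence $^{U}E_{*}^{*,*}\otimes\mathbb{Z}_{(p)}$, the group in bidegree $(3,2t)$ with $t+1<p$ admits no nonzero differentials after $d_3$, neither incoming nor outgoing, so that $^{U}E_{4}^{3,2t}\otimes\mathbb{Z}_{(p)}$ already equals $^{U}E_{\infty}^{3,2t}\otimes\mathbb{Z}_{(p)}$, and then the statement follows because $^{U}E_{4}^{3,2t}={^{U}E}_{3}^{3,2t}/\mathrm{Im}({^{U}d}_{3}^{0,2t+2})$ (the only differential into $(3,2t)$ on the $E_3$-page comes from $(0,2t+2)$, since the source $(3-r, 2t+r-1)$ of $^{U}d_r$ has negative first coordinate for $r\geq 4$).

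First I would reduce everything to the base spectral sequence $^{K}E_{*}^{*,*}\otimes\mathbb{Z}_{(p)}$ via Corollary \ref{cor3} and Corollary \ref{cor4}: since $s=3$, the differentials $^{U}d_{r}^{3,2t}$ for $r\geq 4$ are controlled by $^{K}d_{r}^{3,t'}\otimes\mathbb{Z}_{(p)}$ for $t'\leq 2t$, and by Corollary \ref{cor4} the column $s=3$ governs all columns $s>3$ as well. So it suffices to examine the $p$-local higher differentials of $\tilde{E}_{*}^{*,*}$ (identified with $^{K}E_{*}^{*,*}$ by Theorem \ref{idspecseq}) in total degree relevant to $(3,2t)$. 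Here I invoke Corollary \ref{diff0}: the nonzero higher differentials out of the $s=3$ column are $^{K}d_{2p^{k+1}-1}(p^{k}x_1 v^{lp^{e}-1}) = v^{lp^{e}-1-(p^{k+1}-1)}y_{p,k}$ with $k\geq e$. The smallest fiber degree in which such a differential with source in the $s=3$ column can be nonzero requires $v$ to appear to a power $lp^{e}-1$ with $e\geq 0$; the first obstruction of $p$-primary type occurs at $v^{p-1}$, i.e. fiber degree $2(p-1)$. Hence if $2t < 2(p-1)$, equivalently $t+1<p$, every $^{K}d_{r}^{3,2t'}\otimes\mathbb{Z}_{(p)}$ with $r\geq 4$ and $t'\leq t$ vanishes: there is simply not enough room in the fiber for a power of $v$ large enough to be killed $p$-locally, and the torsion classes $y_{p,k}$ themselves live in $s$-degree $\geq 2p+2 > 2t$ when $t+1<p$, so no differential can land in $(3,2t)$ either. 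Translating back through Corollaries \ref{cor3} and \ref{cor4} gives $^{U}d_{r}^{3,2t}\otimes\mathbb{Z}_{(p)}=0$ for all $r\geq 4$, and likewise $^{U}d_{r}^{3-r,2t+r-1}\otimes\mathbb{Z}_{(p)}=0$ trivially for $r\geq 4$ by the degree obstruction $3-r<0$.

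Putting this together: $^{U}E_{\infty}^{3,2t}\otimes\mathbb{Z}_{(p)} = {^{U}E}_{4}^{3,2t}\otimes\mathbb{Z}_{(p)} = \big({^{U}E}_{3}^{3,2t}/\mathrm{Im}({^{U}d}_{3}^{0,2t+2})\big)\otimes\mathbb{Z}_{(p)}$, which is the claim.

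The main obstacle I anticipate is bookkeeping the exact range in Corollary \ref{diff0}: one must be careful that the constraint "$t+1<p$" really excludes all sources $p^{k}x_1 v^{lp^{e}-1}$ (for every admissible $k\geq e\geq 0$ and $\gcd(l,p)=1$) that would otherwise map nontrivially into or out of fiber degrees $\leq 2t$, and also that the only $E_3$-differential hitting $(3,2t)$ is $^{U}d_3^{0,2t+2}$ — this uses that $\tilde{E}_2^{s,t}$ for $s>3$ decomposes as in Corollary \ref{splitE2} so that the $E_3$-page in the $s=3$ column has no stray contributions. Once the indexing is pinned down, the argument is a direct application of the comparison corollaries of Section 4.
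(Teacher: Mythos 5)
Your overall strategy is sound and ends up close to the paper's: both arguments reduce, via Corollary \ref{cor3}, to showing that every ${^{K}d}_{r}^{3,2t'}\otimes\mathbb{Z}_{(p)}$ with $t'\leq t$ vanishes. Where you differ is in how that vanishing is obtained. The paper does not consult the explicit formulas of Corollary \ref{diff0} at all: it observes that for $r\geq 4$ the group ${^{K}E}_{r}^{3,2t'}$ is a subquotient of ${^{K}E}_{3}^{3,2t'}/\mathrm{Im}({^{K}d}_{3}^{0,2t'+2})\cong\mathbb{Z}/(t'+1)$, whose order is prime to $p$ when $t'+1\leq t+1<p$, so every differential out of it dies $p$-locally. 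Your route instead reads off from Corollary \ref{diff0} that any nonzero $p$-primary higher differential out of the $s=3$ column has source $p^{k}x_{1}v^{lp^{e}-1}$ with $lp^{e}\geq p^{k+1}\geq p$, hence fiber degree at least $2(p-1)>2t$. Both are correct; the paper's order count is cheaper and does not rely on the list in Corollary \ref{diff0} being exhaustive for column $3$, while your version makes visible exactly which differential is the first $p$-primary obstruction. (Your appeal to Corollary \ref{cor4} and the remark about the $s$-degree of the classes $y_{p,k}$ are not needed: incoming differentials for $r\geq 4$ already die for the degree reason you gave.)

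One point you should patch: the identification ${^{U}E}_{4}^{3,2t}={^{U}E}_{3}^{3,2t}/\mathrm{Im}({^{U}d}_{3}^{0,2t+2})$ also requires the \emph{outgoing} differential ${^{U}d}_{3}^{3,2t}\otimes\mathbb{Z}_{(p)}$ to vanish, since $E_{4}=\ker d_{3}/\mathrm{Im}\, d_{3}$; your parenthetical only rules out incoming differentials, and your second paragraph only treats $r\geq 4$. The missing case is true but needs a sentence: the target of ${^{K}d}_{3}^{3,2t'}$ lies in the column generated over $H^{*}(K(\mathbb{Z},2))$ by $y_{2,0}=x_{1}^{2}$ and is therefore $2$-torsion, so it vanishes $p$-locally for odd $p$, while for $p=2$ the hypothesis $t+1<p$ forces $t=0$ and the target group is zero outright. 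With that added, your argument is complete.
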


\begin{proof}
This follows since
\[^UE_{2}^{s,t}\cong H^s(K(\mathbb{Z},3);\mathbb{Z})\otimes_{\Z} H^t(BU_n;\mathbb{Z})\]
has no $p$-torsion for $s<2p+2$.
\end{proof}

It follows from Lemma \ref{p-local} that we have
\begin{proposition}\label{3<k<2p+1}
If $3<k<2p+1$ then $H^{k}(BPU_{n}; \mathbb{Z}_{(p)})$

\begin{equation*}
\cong
 \begin{cases}
 \operatorname{Ker}{^{U}d}_{3}^{0,k}\otimes_{\Z}\mathbb{Z}_{(p)}, \quad k \textrm{ is even,}\\
 H^{k-3}(BU_{n};\mathbb{Z}_{(p)})/\nabla H^{k-1}(BU_{n};\mathbb{Z}_{(p)}), \quad k \textrm{ is odd.}
 \end{cases}
\end{equation*}
\end{proposition}

We proceed to prove a lemma that simplifies the computation of the torsion free summand of $H^*(BPU_n;\mathbb{Z})$. We also present some immediate consequences of the computation apparatus developed in the previous section. At the end of this section we carefully decompose the statement of Theorem \ref{maincalc} to make its proof tractable.
\begin{lemma}\label{simplification}
Let $\bar{P}: BSU_{n}\rightarrow BPU_{n}$ be the map induced by the obvious quotient map $SU_{n}\rightarrow PU_{n}$.
\begin{enumerate}
\item
$\bar{P}$ induces an isomorphism $H^{*}(BPU_{n}; \mathbb{Q})\xrightarrow{\cong} H^{*}(BSU_{n};\mathbb{Q})$. In particular, $\bar{P}^{*}: H^{*}(BPU_{n}; \mathbb{Z})\xrightarrow{\cong} H^{*}(BSU_{n};\mathbb{Z})$ is a monomorphism modulo torsion, such that $\operatorname{Im}P^*$ has the same rank as $H^*(BPU_{n};\mathbb{Z})$ in each dimension.

\item
Let $m$ be an integer such that $\operatorname{gcd}(m,n)=1$, then $H^*(BPU_{n};\mathbb{Z})$ has no non-trivial $m$-torsion.
\end{enumerate}
\end{lemma}

\begin{proof}
 Let $SU_{n}$ be the special unitary group of degree $n$. The short exact sequence of Lie groups
 \[1\rightarrow\mathbb{Z}/n\rightarrow SU_{n}\xrightarrow{P} PU_{n}\rightarrow 1\]
 induces a fiber sequence
 \[K(\mathbb{Z}/n,1)\rightarrow BSU_{n}\rightarrow BPU_{n}.\]
 We shift it to obtain another fiber sequence
 \[BSU_{n}\rightarrow BPU_{n}\rightarrow K(\mathbb{Z}/n, 2),\]
 and consider the cohomological Serre spectral sequence with coefficients in $\mathbb{Q}$ and $\mathbb{Z}/m$. The first and second statement follows respectively from the vanishing of $H^{k}(K(\mathbb{Z}/n,2); \mathbb{Q})$ and $H^{k}(K(\mathbb{Z}/n,2); \mathbb{Z}/m)$, for $k>0$.
\end{proof}

\begin{remark}\label{torsion free}
 The map $\bar{P}$ factors as $BSU_{n}\rightarrow BU_{n}\xrightarrow{P} BPU_{n}$, where $P$ is induced by the quotient map $U_{n}\rightarrow PU_{n}$. By (1) of Lemma \ref{simplification},
 \[P^{*}:H^*(BPU_{n}; \mathbb{Z})\rightarrow H^{*}(BU_{n};\mathbb{Z})\]
 is a monomorphism modulo torsion elements. Furthermore, (1) of Lemma  \ref{simplification} shows that the torsion-free component of $H^{k}(BPU_{n};\mathbb{Z})$
 is $0$ if $k$ is odd, and is a finitely generated free abelian group of the same rank as the abelian group of homogenous polynomials in $\mathbb{Z}[c_{2}, c_{3}, \cdots, c_{n}]$ of degree $k/2$, if $k$ is even. Therefore the group structure of the torsion-free component of $H^{*}(BPU_{n};\mathbb{Z})$ is determined, though it doesn't  seem to have a canonical choice of generators. The calculation of the degree-wise torsion-free components is henceforth omitted unless we need some particular generators. However, we do check the cup products of the torsion-free elements. An alternative method to calculate the torsion-free component is via representation theory. Examples can be found in \cite{An2} (Antieau, 2016) and \cite{Ka2} (Kameko, 2016).
\end{remark}

We proceed to present a proof Theorem \ref{2p+2}.
\begin{lemma}\label{<p}
Let $p$ be a prime, $k, l\in\mathbb{N}$ satisfying $p|k$ and $l<p$. Then we have
\[p|{k\choose l}.\]
\end{lemma}
\begin{proof}
It follows from direct computation, or set $k_0=0$ in Lemma \ref{Lucas} (Lucas' Theorem).
\end{proof}
We present the following
\begin{proof}[Proof of Theorem \ref{2p+2}]
The case $p\nmid n$ follows from (2) of Lemma \ref{simplification}.

We assume $p|n$ for the rest of the proof. By Proposition \ref{K(Z,3)general}, the $p$-local cohomology of $K(\mathbb{Z},3)$ satisfies
\begin{equation*}
H^s(K(\mathbb{Z},3);\mathbb{Z}_{(p)})\cong
\begin{cases}
\mathbb{Z}_{(p)},\ s=3,\\
0,\ s\neq 3,s<2p+2,\\
\mathbb{Z}/p,\ s=2p+2.
\end{cases}
\end{equation*}
Therefore, the only possibly nontrivial differential landing in $^UE_{*}^{2p+2,0}$ is
\[^Ud_{2p-1}^{3,2p-2}: {^UE}_{2p-1}^{3,2p-2}\rightarrow {^UE}_{2p-1}^{2p+2,0}\cong{^UE}_{2}^{2p+2,0}\cong
H^{2p+2}(K(\mathbb{Z},3);\mathbb{Z}),\]
where
\[{^UE}_{2p-1}^{3,2p-2}\cong {^UE}_{2}^{3,2p-2}/\operatorname{Im}{^Ud}_{3}^{0,2p}=H^{2p-2}(BU_n;\mathbb{Z})\otimes_{\Z} x_1/\operatorname{Im}{^Ud}_{3}^{0,2p}.\]
Let $c_1^{i_1}\cdots c_{p-1}^{i_{p-1}}$ be a monomial in $H^{2p-2}(BU_n;\mathbb{Z})$. Then it follows from Corollary \ref{diff0} and Corollary \ref{cor2} that we have
\[^Ud_{2p-1}^{3,2p-2}(c_1^{i_1}\cdots c_{p-1}^{i_{p-1}}x_1)=\prod_{j=1}^{p-1}{n\choose j}^{i_j}y_{p,0},\]
where $y_{p,0}\in H^{2p+2}(K(\mathbb{Z},3);\mathbb{Z})$ is of order $p$. By Lemma \ref{<p}, we have
\begin{lemma}\label{2p-2}
If $p|n$, then we have.
\[^Ud_{2p-1}^{3,2p-2}=0.\]
\end{lemma}
Therefore we have $H^{2p+2}(BPU_n;\mathbb{Z})\cong\mathbb{Z}/p$, generated by $y_{p,0}$.

Finally, we verify the vanishing of the $p$-torsion subgroups of $H^k(BPU_n;\mathbb{Z})$ for $3<k<2p+2$. Notice that, the only values of $s$ and $t$ such that $s+t<2p+2$ and $^UE_2^{s,t}$ contain possibly nontrivial $p$-torsion elements are $s=3$ and $t=2q$, where $q<p$. For obvious degree reasons the only nontrivial differential landing in $^UE_*^{3,2q}$ is $^Ud_3^{0,2q+2}$. Therefore we have
\begin{equation}\label{eq:Im nabla}
^UE_{\infty}^{3,2q}\subset{^UE}_2^{3,2q}/\operatorname{Im}{^Ud}_3^{0,2q+2}\cong H^{2q}(BU_n;\mathbb{Z})/\nabla(H^{2q+2}(BU_n;\mathbb{Z})).
\end{equation}
We inspect
\[\nabla:H^{2q+2}(BU_n\mathbb{Z})\rightarrow H^{2q}(BU_n\mathbb{Z})\]
for $q<p$. By Corollary \ref{diff4} and Lemma \ref{Lucas} (Lucas' Theorem), in this range $\nabla$ is surjective after localization at $p$. In other words, we have
\begin{lemma}\label{lem:nablasurj}
When $q<p$, the group
\[^UE_2^{3,2p}/\operatorname{Im}{^Ud}_3^{0,2q+2}\cong H^{2q}(BU_n;\mathbb{Z})/\nabla(H^{2q+2}(BU_n;\mathbb{Z}))\]
is a torsion group containing no nontrivial $p$-torsion elements.
\end{lemma}
The desired conclusion now follows from (\ref{eq:Im nabla}) and Lemma \ref{lem:nablasurj}.
\end{proof}

We complete the proof of Theorem \ref{maincalc} in Sections \ref{1<k<6}, \ref{k=7,8} and \ref{k=9,10}. The reader may refer to Figure \ref{UE} for the spectral sequence ${{^U}E}_{*}^{*,*}$. For easier reference, we break the statement of the theorem down to the following list of assertions:
\begin{enumerate}[label=\textbf{C\arabic*}]
 \item \label{it:c1}$H^{k}(BPU_{n};\mathbb{Z})=0$ for $k=1, 2$. $H^{3}(BPU_{n};\mathbb{Z})\cong\mathbb{Z}/n$ is generated by $x_1$ of order $n$. $x_1^2$ is of order $2$ if $n$ is even and is $0$ otherwise.
 \item \label{it:c2}$H^{4}(BPU_{n};\mathbb{Z})\cong\mathbb{Z}$ is generated by $e_2$, such that $P^{*}(e_2)=2nc_2-(n-1)c_1^2$ when $n$ is even and $P^{*}(e_2)=nc_2-\frac{n-1}{2}c_1^2$ when $n$ is odd.
 \item \label{it:c3} $H^{5}(BPU_{n};\mathbb{Z})=0$.
 \item \label{it:c4} Let $n>2$. If $n$ is even, $H^{6}(BPU_{n};\mathbb{Z})\cong\mathbb{Z}\oplus\mathbb{Z}/2$ is generated by $e_3$ of infinite order and $x_1^2$ of order $2$. If $n$ is odd, $H^{6}(BPU_{n};\mathbb{Z})\cong\mathbb{Z}$ is generated by $e_3$, and $x_1^2=0$. In the exceptional case $n=2$, the assertion holds with the absence of $e_3$ and its corresponding direct summand $\mathbb{Z}$.
 \item \label{it:c5}If $n=4l+2$, $H^{7}(BPU_{n};\mathbb{Z})\cong\mathbb{Z}/2$ is generated by $e_{2}x_{1}$. Otherwise $H^{7}(BPU_{n};\mathbb{Z})=0$ and in particular $e_{2}x_{1}=0$.
 \item \label{it:c6}Let $n\geq 4$. If $3|n$, $H^{8}(BPU_{n};\mathbb{Z})\cong\mathbb{Z}\oplus\mathbb{Z}\oplus\mathbb{Z}/3$ generated by $e_4$ and $e_2^2$ of infinite order and $y_{3,0}$ of order $3$. Otherwise $H^{8}(BPU_{n};\mathbb{Z})\cong\mathbb{Z}\oplus\mathbb{Z}$ is generated by $e_2^2$ and $e_4$. In the exceptional cases $n=2, 3$, this assertion holds as well, with $e_4$ and its corresponding direct summand $\mathbb{Z}$ absent.
 \item \label{it:c7}The group $H^{9}(BPU_{n};\mathbb{Z})\cong\mathbb{Z}/\epsilon_{2}(n)$
 is generated by $x_1^3$ of order $\epsilon_{2}(n)$.
 \item \label{it:c8}$e_{3}x_1=0$.
 \item \label{it:c9}If $n\geq 5$, $H^{10}(BPU_{n};\mathbb{Z})\cong\mathbb{Z}\oplus\mathbb{Z}\oplus\mathbb{Z}/\epsilon_2(n)$
     is generated by $e_{2}e_{3}, e_5$ of infinite order, $y_{2,1}$ of order $\epsilon_2(n)$. In the exceptional cases $n<5$, the assertion holds, with the absence of monomials involving $e_i$ for $i>n$ and their corresponding direct summands $\mathbb{Z}$.
 \item \label{it:c10}$e_{2}x_1^2=y_{2,1}$ when $n=4l+2$, and $e_{2}x_1^2=0$ otherwise.
\end{enumerate}

\begin{figure}
\begin{tikzpicture}
\draw[step=1cm,gray, thick] (0,0) grid (8,8);
\draw [ thick, <->] (0,9)--(0,0)--(9,0);

\node [below] at (3,0) {$3$};
\node [below] at (6,0) {$6$};
\node [below] at (8,0) {$8$};
\node [below left] at (0,2) {$2$};
\node [below left] at (0,4) {$4$};
\node [below left] at (0,6) {$6$};
\node [below left] at (0,8) {$8$};
\node [right] at (9,0) {$s$};
\node [above] at (0,7) {$t$};

\node [above right] at (0,0) {$\mathbb{Z}$};
\node [above left] at (0,2) {$\mathbb{Z}:c_1$};
\node [above left] at (0,4) {$\mathbb{Z}^2:c_1^2,c_2$};
\node [above left] at (0,6) {$\mathbb{Z}^3:c_1^3,c_1c_2,c_3$};
\node [above left] at (0,8) {$\mathbb{Z}^5:c_1^4,c_1^2c_2,, c_2^2,c_1c_3,c_4$};

\node [above right] at (3,0) {$\mathbb{Z}$};
\node [above right] at (3,2) {$\mathbb{Z}$};
\node [above right] at (3,4) {$\mathbb{Z}^2$};

\node [above right] at (6,0) {$\mathbb{Z}/2$};
\node [above right] at (6,2) {$\mathbb{Z}/2$};
\node [above right] at (6,4) {$\mathbb{Z}/2\oplus\mathbb{Z}/2$};

\node [above right] at (8,0) {$\mathbb{Z}/3$};
\node [above right] at (8,2) {$\mathbb{Z}/3$};

\draw (0,2) [thick, ->] to node [above] {$\times n$} (3,0);
\draw (3,2) [thick, ->] to node [above] {$\times n$} (6,0);
\draw (3,4) [thick, ->] to (6,2);
\draw (3,6) [thick, ->] to (6,4);
\draw (0,4) [thick, ->] to (3,2);
\draw (0,6) [thick, ->] to (3,4);
\draw (0,8) [thick, ->] to (3,6);
\end{tikzpicture}\\
\caption{Some non-trivial differentials of $^{U}E_{*}^{*,*}$. A node with coordinate $(s,t)$ is unmarked if $^{U}E_{2}^{s,t}=0$.}\label{UE}
\end{figure}

\section{$H^{k}(BPU_n; \mathbb{Z})$ for $1\leq k\leq 6$}\label{1<k<6}
For $1\leq r\leq 5$ the results are given by Antieau and Williams (\cite{An}, 2014). The interested readers can compare it to our computation.
\begin{proof}[Proof of \ref{it:c1} to \ref{it:c4}]
 First notice ${{^U}E}^{s,t}_{2}=0$ for $s=1,2,4,5,7$ or $t$ odd. By Proposition \ref{diff3}, $^{U}d_{3}(c_{1})={^{T}}d_{3}(\sum_{i=1}^{n}v_{i})=nx_1$. This immediately proves \ref{it:c1}. See Figure 3 for a picture in low dimensions. By Corollary \ref{diff4} we have
 \begin{equation}\label{(0,4)}
 ^{U}d_{3}(c_2)=(n-1)c_{1}x_1, {^{U}d}_{3}(c_{1}^{2})=2nc_{1}x_1, {^{U}d}_{3}(c_{1}x_1)=nx_1^2.
 \end{equation}
 So $^{U}E_{4}^{0,4}\cong {^{U}E}_{\infty}^{0,4}\cong H^{4}(BPU_{n}; \mathbb{Z})$ is easily verified to be $\mathbb{Z}$. Interested readers can refer to Lemma 3.2 of \cite{An} (Antieau and Williams, 2014) to identify $H^{4}(BPU_{n}; \mathbb{Z})$ as a subgroup of $H^{4}(BU_{n};\mathbb{Z})$, or compute it directly. Either way, we proved \ref{it:c2}.

   Notice ${^{U}E}_{\infty}^{0,6}\cong\mathbb{Z}$, by Remark \ref{torsion free}.
 \begin{enumerate}
 \item If $n$ is even, by (\ref{(0,4)}), $^{U}d_{3}^{0,4}$ is a surjection and therefore $^{U}d_{3}^{3,2}=0$. Hence $H^{5}(BPU_{n}; \mathbb{Z})\cong {^{U}E}_{4}^{3,2}=0$, and
     \begin{equation*}
     H^{6}(BPU_{n}; \mathbb{Z})\cong {^{U}E}_{\infty}^{0,6}\oplus{^{U}E}_{4}^{6,0}\cong {^{U}E}_{\infty}^{0,6}\oplus{^{U}E}_{3}^{6,0}\cong
     \begin{cases}
     \mathbb{Z}\oplus\mathbb{Z}/2, n>2,\\
     \mathbb{Z}/2, n=2.
     \end{cases}
     \end{equation*}
     with ${^{U}E}_{3}^{6,0}$ generated by $x_1^2$.

 \item If $n$ is odd, by(\ref{(0,4)}) we have $^{U}d_{3}^{3,2}(c_{1}x_{1})=y_{2,0}$, which implies  ${^{U}E}_{4}^{6,0}=0$. Hence $H^{5}(BPU_{n}; \mathbb{Z})\cong {^{U}E}_{4}^{3,2}=0$, and $H^{6}(BPU_{n}; \mathbb{Z})\cong {^{U}E}_{\infty}^{0,6}\oplus{^{U}E}_{4}^{6,0}=\mathbb{Z}$.
 \end{enumerate}
 In both cases we take $e_3\in H^{6}(BPU_{n}; \mathbb{Z})$ generating ${^{U}E}_{\infty}^{0,6}$, for $n>2$. Therefore, \ref{it:c3} and \ref{it:c4} follows. Notice that $e_3$ is determined, up to torsion, by this argument. In fact, there is a unique choice of $e_3$ that fits the statement of Theorem \ref{maincalc}. This choice will be specified in Section 9, where we discuss the cup products in $H^{9}(BPU_n; \mathbb{Z})$.
 \end{proof}
\section{$H^{k}(BPU_n; \mathbb{Z})$ for $k=7,8$}\label{k=7,8}
 \begin{proof}[Proof of \ref{it:c5}]
 Notice that the only bi-degree $(s,t)$ such that $s+t=7$ and ${^U}E_{2}^{s,t}$ is nontrivial is $(3,4)$. Therefore $H^{7}(BPU_{n};\mathbb{Z})\cong {{^U}E}_{\infty}^{3,4}$.  We consider the $p$-local cohomology of ${H^{7}(BPU_{n};\mathbb{Z})}_{(p)}$ for each prime $p$ separately. The relevant differentials are as follows:
 \begin{enumerate}
  \item The only nontrivial differential landing in ${{^U}E}_{*}^{3,4}$ is ${{^U}d}_{3}^{0,6}: {{^U}E}_{3}^{0,6}\rightarrow {{^U}E}_{3}^{3,4}$
   \[c_3\mapsto (n-2)c_{2}x_{1},\quad c_{1}c_{2}\mapsto [nc_{2}+(n-1)c_{1}^2]x_1,\quad  c_{1}^3\mapsto 3nc_{1}^{2}x_{1}.\]
  \item The only nontrivial differential from ${{^U}E}_{*}^{3,4}$ is
  $${{^U}d}_{3}^{3,4}: {{^U}E}_{3}^{3,4}\rightarrow {{^U}E}_{3}^{6,2},$$
  $$c_{1}^{2}x_{1}\mapsto 2nc_{1}y_{2,0}=0,\quad c_{2}x_{1}\mapsto (n-1)c_{1}y_{2,0}.$$
 \end{enumerate}

 In what follows we compute the localization ${H^{7}(BPU_{n};\mathbb{Z})}_{(p)}$ for each prime $p$ separately. Throughout the rest of this and the next section we respect the following

 \begin{convention}
 Everything is implicitely assumed to be localized at the specified prime $p$ in each case.
 \end{convention}
 Case 1. $p=2.$ In this case ${{^U}E}_{\infty}^{3,4}=\operatorname{Ker}{{^U}d}_{3}^{3,4}/\textrm{Im}{{^U}d}_{3}^{0,6}$. By (2) of Lemma \ref{simplification} we only need to consider the case that $n$ is even, in which, assuming $n>2$ we have $\mathbb{Z}$-basis $\{c_{1}^{3}, c_{1}c_{2}, c_{3}\}$ and $\{c_{1}^{2}x_{1}, 2c_{2}x_{1}\}$ of ${{^U}E}_{3}^{0,6}$ and $\operatorname{Ker}{{^U}d}_{3}^{3,4}$, respectively. The corresponding matrix for ${{^U}d}_{3}^{0,6}$ is
 \begin{equation}\label{matrixeven}
\left[ \begin{array}{ccc}
 3n & n-1 & 0 \\
 0 & \frac{n}{2} & \frac{n-2}{2}
\end{array} \right].
\end{equation}

We apply invertible row and column operations to it and obtain
\begin{equation}\label{matrixeven2'}
\left[ \begin{array}{ccc}
 1 & 0 \\
 -\frac{n/2}{n-1} & 1
\end{array} \right]\cdot
\left[ \begin{array}{ccc}
 3n & n-1 & 0 \\
 0 & \frac{n}{2} & \frac{n-2}{2}
\end{array} \right]\cdot
\left[ \begin{array}{ccc}
 1 & 0 & 0 \\
 -\frac{3n}{n-1} & 1 & 0\\
 0 & 0 & 1
\end{array} \right]=
\left[ \begin{array}{ccc}
 0 & n-1 & 0 \\
 -\frac{3}{n-1}\cdot\frac{n^2}{2} & 0 & \frac{n-2}{2}
\end{array} \right],
\end{equation}
with the row operation corresponding to the change of basis
\begin{equation}\label{matrixeven3}
\left[ \begin{array}{ccc}
 c_{1}^{2}x_{1} & 2c_{2}x_{1} \\
\end{array} \right]\cdot
{\left[ \begin{array}{ccc}
 1 & 0 \\
 -\frac{n/2}{n-1} & 1
\end{array} \right]}^{-1}=
\left[ \begin{array}{ccc}
 c_{1}^{2}x_{1}+\frac{n}{n-1}c_{2}x_{1} & 2c_{2}x_{1} \\
\end{array} \right].
\end{equation}

By (\ref{matrixeven2'}) and (\ref{matrixeven3}), $\operatorname{Im}{{^U}d}_{3}^{0,6}$ is generated by \[c_{1}^{2}x_{1}+\frac{n/2}{n-1}c_{2}x_{1} \textrm{ and } \operatorname{gcd}\{\frac{n^2}{2},\frac{n-2}{2}\}\cdot 2c_{2}x_{1}=\operatorname{gcd}\{2,\frac{n-2}{2}\}\cdot 2c_{2}x_{1},\]
where
\begin{equation}\label{eq:2mod4}
\operatorname{gcd}\{2,\frac{n-2}{2}\}=
\begin{cases}
2,\ n\equiv2\pmod{4},\\
1,\textrm{ otherwise}.
\end{cases}
\end{equation}


In the exceptional case $n=2$, (\ref{matrixeven}) is reduced to
 \begin{equation*}
\left[ \begin{array}{ccc}
 6 & 1 \\
 0 & 1\\
\end{array} \right],
\end{equation*}
and (\ref{matrixeven2'}) is reduced to
\begin{equation}\label{matrixeven2}
\left[ \begin{array}{ccc}
 1 & 0 \\
 -\frac{n/2}{n-1} & 1
\end{array} \right]\cdot
\left[ \begin{array}{ccc}
 6 & 1  \\
 0 & 1
\end{array} \right]\cdot
\left[ \begin{array}{ccc}
 1 & 0 \\
 -\frac{3n}{n-1} & 1
\end{array} \right]=
\left[ \begin{array}{ccc}
 0 & 1 \\
 6 & 0
\end{array} \right],
\end{equation}
which yields the same argument as above.

The above discussion, in particular, (\ref{eq:2mod4}), shows
\begin{equation*}
{H^{7}(BPU_{n};\mathbb{Z})}_{(2)}\cong
\begin{cases}
\mathbb{Z}/2, \textrm{ generated by $2c_{2}x_1$, if } n\equiv2\pmod{4},\\
 0. \textrm{ otherwise}.
\end{cases}
\end{equation*}
By \ref{it:c2}, we know that for $n$ even, $H^{4}(BPU_{n};\mathbb{Z})\cong\mathbb{Z}$ is generated by $e_2$, which corresponds to $2nc_{2}-(n-1)c_{1}^{2}\in{{^U}E}_{\infty}^{0,4}$. Furthermore, when $n=4l+2$, in ${H^{7}(BPU_{n};\mathbb{Z})}_{(2)}\cong\mathbb{Z}/2$ we have
\begin{equation}\label{ringdim7}
 \begin{split}
  &2c_{2}x_{1}=3(4l+2)c_{2}x_{1}=3nc_{2}x_{1}\\
 =&[(n-1)c_{1}^{2}+nc_{2}]x_{1}-[(n-1)c_{1}^{2}-2nc_{2}]x_{1}\\
 =&[2nc_{2}-(n-1)c_{1}^{2}]x_{1}\in {{^U}E}_{4}^{3,4},
 \end{split}
\end{equation}
since by (\ref{matrixeven2}) and (\ref{matrixeven3}), $[(n-1)c_{1}^{2}+nc_{2}]x_{1}$ is in the image of ${{^U}d}_{3}^{0,6}$. Therefore, (\ref{ringdim7}) shows that $2c_{2}x_{1}=[2nc_{2}-(n-1)c_{1}^{2}]x_{1}$ yields a non-trivial product in ${{^U}E}_{\infty}^{3,4}$ that detects $e_{2}x_{1}$.

Case 2. $p>2$. By Lemma \ref{lem:nablasurj} the group
\[H^7(BPU_n;\mathbb{Z})\cong {^UE}_{\infty}^{3,4}\]
has no nontrivial $p$-torsion elements, from which \ref{it:c5} follows.
\end{proof}

\begin{proof}[Proof of \ref{it:c6}]
 It suffices to consider the torsion component. Therefore the relevant entries in ${{^U}E}_{2}^{*,*}$ are ${{^U}E}_{2}^{8,0}$ and ${{^U}E}_{2}^{6,2}$. The relevant differentials are ${{^U}d}_{3}^{3,4}$ and ${{^U}d}_{5}^{3,4}$, as given in (2) and (3) in the proof for $k=7$, together with the following one:
 \begin{equation}\label{(6,2) to (9,0)}
  {{^U}d}_{3}^{6,2}:{{^U}E}_{3}^{6,2}\rightarrow {{^U}E}_{3}^{9,0},\quad c_{1}y_{2,0}\mapsto nx_{1}y_{2,0}=nx_{1}^3.
 \end{equation}
 Again we consider ${H^{8}(BPU_{n};\mathbb{Z})}_{(p)}$ for each prime $p$ separately. In each of the following cases we assume that ${{^U}E}_{*}^{*,*}$ is localized at the specified prime $p$.

 Case 1. $p=2$. In this case the torsion component of ${H^{8}(BPU_{n};\mathbb{Z})}_{(2)}$ is isomorphic to ${{^U}E}_{\infty}^{6,2}=\operatorname{Ker}{{^U}d}_{3}^{6,2}/\operatorname{Im}{{^U}d}_{3}^{3,4}$. And by (2) of Lemma \ref{simplification} we consider only the case that $n$ is even. By (\ref{(6,2) to (9,0)}),  $\operatorname{Ker}{{^U}d}_{3}^{6,2}=\mathbb{Z}\{c_{1}y_{2,0}\}$. By the differentials given in (2) in the proof for \ref{it:c5}, $\operatorname{Im}{{^U}d}_{3}^{3,4}=\mathbb{Z}\{c_{1}y_{2,0}\}$. Therefore the torsion of ${H^{8}(BPU_{n};\mathbb{Z})}_{(2)}$ is $0$ when $n$ is even.

 Case 2. $p=3$. In this case the torsion component of ${H^{8}(BPU_{n};\mathbb{Z})}_{(3)}$ is isomorphic to ${{^U}E}_{2}^{8,0}/\operatorname{Im}{{^U}d}_{5}^{3,4}$, where ${{^U}E}_{2}^{8,0}\cong\mathbb{Z}/3$ is generated by a single element $y_{3,0}$. By the differentials given in (3) of the proof for $k=7$, $\operatorname{Im}{{^U}d}_{5}^{3,4}=0$ when $3|n$. Therefore the torsion component of ${H^{8}(BPU_{n};\mathbb{Z})}_{(3)}$ is isomorphic to $\mathbb{Z}/3$ and generated by $y_{3,0}$.

 For $p>3$ there is no $p$-torsion in the relevant range of ${{^U}E}_{*}^{*,*}$.

 It remains to show  that $e_2^2$ generates a $\mathbb{Z}$-summand. In the proof of \ref{it:c5} we have shown that $e_2$ is represented by the permanent cocycle
 \begin{equation}\label{e2}
 \epsilon_2(n-1)^{-1}[2nc_{2}-(n-1)c_{1}^{2}]\in{^UE}_{\infty}^{0,4}.
 \end{equation}
 Therefore $e_2^2$ is represented by the permanent cocycle
 \[\epsilon_2(n-1)^{-2}[4n^2c_2^2-4(n-1)nc_{1}^{2}c_2+(n-1)^2c_1^4]\in{^UE}_{\infty}^{0,8}.\]
 Comparing the coefficients, we see that it cannot be divided by any integer other than $ \pm1$. It follows from the discussion in Remark \ref{torsion free} that $e_2^2$ is a generator.
\end{proof}

\section{$H^{k}(BPU_n; \mathbb{Z})$ for $k=9, 10$}\label{k=9,10}

The study of $H^{9}(BPU_n; \mathbb{Z})$ requires extra work when $n$ is even, since the differential ${^{U}}d_{9}^{0,8}$ cannot be determined from Proposition \ref{main}. Indeed, when $n$ is even, the differential
\[{^{U}}d_{9}^{0,8}: {^{U}}E_{9}^{0,8}\rightarrow {^{U}}E_{9}^{9,0}\]
does not satisfy the condition of Proposition \ref{main}, since the differential
\[{^{T}}d_{3}^{6,2}: {^{T}}E_{3}^{6,2}\rightarrow {^{T}}E_{3}^{9,0}\]
is not trivial. Indeed, we have
\[{^{T}}d_{3}^{6,2}(v_1x_1^2)=x_1^3\neq 0\in {^TE}_3^{9,0}.\]
On the other hand, we use Corollary \ref{diff4} to compute
\[{^Ud}_{3}^{6,2}(c_1x_1^2)=nx_1^3=0,\]
from which we deduce ${^Ud}_{3}^{6,2}=0$. Therefore, the homomorphism induced by the inclusion of maximal torus $\Psi^*:{^{U}}E_{4}^{0,8}\rightarrow{^{T}}E_{4}^{0,8}$ is not injective, and Proposition \ref{main} fails to determine ${^U}d_{9}^{0,8}$.

To remedy this, we consider the exceptional isomorphism of Lie groups $PU_2\cong SO_3$. It is well-known that we have
\begin{equation}\label{BSO_3mod2}
H^*(BPU_2;\mathbb{Z}/2)\cong H^*(BSO_3;\mathbb{Z}/2)\cong\mathbb{Z}/2[\omega_2,\omega_3],
\end{equation}
where $\omega_2, \omega_3$ are the universal Stiefel-Whitney classes. Furthermore, we have
\begin{equation}\label{Sq2w3}
\operatorname{Sq}^2(\omega_3)=\omega_2\omega_3.
\end{equation}
Indeed, the Adem relations indicates
\[\operatorname{Sq}^1\operatorname{Sq}^2(\omega_3)=\operatorname{Sq}^3(\omega_3)=\omega_3^2\neq 0.\]
Therefore $\operatorname{Sq}^2(\omega_3)\neq 0$. The only nonzero element in $H^5(BPU_2;\mathbb{Z}/2)$ is $\omega_2\omega_3$, and equation (\ref{Sq2w3}) follows.

As for the integral cohomology, we have the following result due to E. Brown. For more general cases see \cite{Br} (Brown, 1982).

\begin{proposition}\label{BSO_3}
 $H^{*}(BPU_2;\mathbb{Z})\cong H^{*}(BSO_3;\mathbb{Z})\cong\mathbb{Z}[e_2]\otimes_{\mathbb{Z}}\mathbb{Z}/2[x_1]$, where $e_2$ is of degree $4$, and we abuse the notation $x_1$ to let it denote the image of $x_1\in H^{3}(K(\mathbb{Z},3);\mathbb{Z})$ under the homomorphism induced by the second arrow of the fiber sequence $BU_2\rightarrow BPU_2\rightarrow K(\mathbb{Z},3)$.
\end{proposition}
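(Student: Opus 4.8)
The plan is to sidestep the spectral sequence ${}^{U}E_{*}^{*,*}$ in this exceptional case and rely instead on the classical computation of $H^{*}(\mathbf{B}SO_3;\mathbb{Z})$. First I would record the exceptional isomorphism $PU_2\cong SO_3$: the group $SU_2$ double-covers $SO_3$ with kernel the center $\{\pm I\}$, and $PU_2=SU_2/\{\pm I\}$, so $PU_2\cong SO_3$ and hence $\mathbf{B}PU_2\simeq\mathbf{B}SO_3$; this is the first isomorphism. For the second I would invoke E.\ Brown's computation (\cite{Br}), namely $H^{*}(\mathbf{B}SO_3;\mathbb{Z})\cong\mathbb{Z}[p_1]\otimes_{\mathbb{Z}}\mathbb{Z}/2[W_3]$, where $p_1\in H^{4}$ is the first Pontryagin class and $W_3=\beta(w_2)\in H^{3}$ is the integral third Stiefel--Whitney class, the only relation being $2W_3=0$.

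It remains to match the named generators. For $x_1$: assertion (1) of Theorem \ref{maincalc}, which is proved in the section on $H^{k}(\mathbf{B}PU_n;\mathbb{Z})$ for $k\leq6$ and therefore does not depend on the present proposition, already shows that $H^{3}(\mathbf{B}PU_2;\mathbb{Z})\cong\mathbb{Z}/2$ is generated by the image of $x_1\in H^{3}(K(\mathbb{Z},3);\mathbb{Z})$ under the map induced by $\mathbf{B}PU_2\to K(\mathbb{Z},3)$; concretely $H^{3}(\mathbf{B}U_2;\mathbb{Z})=0$ forces $H^{3}(\mathbf{B}PU_2;\mathbb{Z})={}^{U}E_{\infty}^{3,0}={}^{U}E_{3}^{3,0}/\textrm{Im}\,{}^{U}d_{3}^{0,2}=\mathbb{Z}/2$, using ${}^{U}d_{3}(c_1)=2x_1$. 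Since $H^{3}(\mathbf{B}SO_3;\mathbb{Z})\cong\mathbb{Z}/2$ has $W_3$ as its unique nonzero element, the two agree under $\mathbf{B}PU_2\simeq\mathbf{B}SO_3$. For $e_2$: assertion (2) gives $H^{4}(\mathbf{B}PU_2;\mathbb{Z})\cong\mathbb{Z}$ with generator $e_2$ satisfying $P^{*}(e_2)=4c_2-c_1^{2}$, and $H^{4}(\mathbf{B}SO_3;\mathbb{Z})\cong\mathbb{Z}$ is generated by $p_1$, so the two generators differ by a unit and $e_2=\pm p_1$. If one wants the sign, restrict the composite $\mathbf{B}U_2\to\mathbf{B}PU_2\simeq\mathbf{B}SO_3$, which classifies the adjoint representation of $U_2$, to the maximal torus with weights $v_1,v_2$: it splits there as a trivial real line plus a complex line bundle of weight $v_1-v_2$ regarded as a real plane bundle, so $p_1$ pulls back to $(v_1-v_2)^{2}=c_1^{2}-4c_2=-P^{*}(e_2)$; the sign does not affect the stated ring isomorphism. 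Hence $H^{*}(\mathbf{B}PU_2;\mathbb{Z})$ is the polynomial ring on the degree-$4$ class $e_2$ and the degree-$3$ class $x_1$ with the single relation $2x_1=0$, that is, $\mathbb{Z}[e_2]\otimes_{\mathbb{Z}}\mathbb{Z}/2[x_1]$.

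I do not expect a serious obstacle, since the ring structure is quoted from \cite{Br}; the points needing care are that the identifications of $e_2$ and $x_1$ with $p_1$ and $W_3$ are legitimate and non-circular --- which holds because assertions (1) and (2) are established before this proposition --- and that the quoted ring really is $\mathbb{Z}[p_1]\otimes_{\mathbb{Z}}\mathbb{Z}/2[W_3]$ with no low-degree surprises, which one may re-derive from $H^{*}(\mathbf{B}SO_3;\mathbb{Z}/2)=\mathbb{Z}/2[w_2,w_3]$ via the mod $2$ Bockstein, where $Sq^{1}w_2=w_3$ forces $W_3$ to reduce to $w_3$ and each power $W_3^{k}$ to reduce to $w_3^{k}\neq0$. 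As a final check one can compare with Theorem \ref{maincalc} for $n=2$, where $j_n=2$, $\delta(2)=2$, the classes $e_3,y_{3,0},z_1,z_2$ vanish and $y_{2,1}=e_2x_1^{2}$, so the presented ring collapses through degree $10$ to $\mathbb{Z}[e_2]\otimes_{\mathbb{Z}}\mathbb{Z}/2[x_1]$.
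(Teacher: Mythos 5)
Your proposal is correct and follows the same route as the paper, which states this proposition without proof, attributing it to E.~Brown via the exceptional isomorphism $PU_2\cong SO_3$. The additional details you supply --- the non-circular identification of $e_2$ and $x_1$ with $p_1$ and $W_3=\beta(w_2)$ using assertions (1) and (2) of Theorem \ref{maincalc}, and the weight computation $(v_1-v_2)^2=c_1^2-4c_2=-P^{*}(e_2)$ for the adjoint representation --- are accurate and fill in exactly what the paper leaves implicit.
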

To relate $BPU_2$ and $BPU_n$ for $n$ even, we consider the map $\Delta: BU_2\rightarrow BU_n$ be the one given by the inclusion
\begin{equation*}
U_2\hookrightarrow U_n,\ A\mapsto
\begin{bmatrix}
    A & \hdots & 0 & 0 \\
    0 & A &\hdots &\vdots\\
    \vdots& \cdots& \ddots   &0\\
    0& \cdots& \hdots & A
    \end{bmatrix},
\end{equation*}
 This inclusion passes to $PU_2\hookrightarrow PU_n$ and yields a map $\Delta':BPU_2\hookrightarrow BPU_n$.

Recall that in Theorem \ref{maincalc} we let $e_2$ denote the generator of $H^{4}(BPU_n;\mathbb{Z})\cong\mathbb{Z}$ for all $n>1$.
Part (1) of the following lemma is due to A. Bousfield.
\begin{lemma}\label{k=9lemma1}
 For $n>0$ even, we have the following assertions.
 \begin{enumerate}
 \item $(\Delta')^{*}(x_1)=x_1$. In particular, $x_1^3\in H^{9}(BPU_{n};\mathbb{Z})$ is non-zero.
 \item $(\Delta')^{*}(e_2)=(\frac{n}{2})^{2}e_2$. In particular, when $n=4l+2$ for some integer $l$, $e_2x_1^2\in H^{10}(BPU_{n};\mathbb{Z})$ is non-zero.
 \end{enumerate}
\end{lemma}

\begin{proof}
 Consider the following commutative diagram:
\begin{equation*}
\begin{tikzcd}
BS^1\arrow{d}{=}\arrow{r}&BU_2 \arrow{d}{\Delta}\arrow{r} &BPU_{2} \arrow{d}{\Delta'}\\
BS^1\arrow{r}            &BU_{n}\arrow{r}                &BPU_{n}
\end{tikzcd}
\end{equation*}
which induces another commutative diagram as follows:
\begin{equation}\label{Delta}
\begin{tikzcd}
BU_2 \arrow{d}{\Delta}\arrow{r} &BPU_{2} \arrow{d}{\Delta'}\arrow{r} &K(\mathbb{Z},3)\arrow{d}{=}\\
BU_{n}\arrow{r}               &BPU_{n} \arrow{r}                   &K(\mathbb{Z},3)
\end{tikzcd}
\end{equation}

 This diagram induces a homomorphism of Serre spectral sequences, such that its restriction on the bottom row of the $E_{2}$ pages is the identity. In particular it takes $x_1$ to itself. Moreover, it follows from Proposition \ref{BSO_3} that $(\Delta')^{*}(x_1^3)=x_1^3\in H^{9}(BPU_2;\mathbb{Z})$ is non-zero, which completes the proof of (1).

 It is well known that the $U_{n}$-bundle over $BU_2$ induced by $\Delta$ is the Whitney sum of $\frac{n}{2}$ copies of the universal $U_2$-bundle, of which the total Chern class is
 \[(1+c_1+c_2)^{\frac{n}{2}}=1+\frac{n}{2}c_1+(\frac{n}{2}c_{2}+{{n/2}\choose{2}}c_1^2)+(\textrm{terms of higher degrees}).\]
 Therefore we have
 \begin{equation}\label{c_1}
 \begin{split}
 &\Delta^{*}: H^{2}(BU_n;\mathbb{Z})\rightarrow H^{2}(BU_2;\mathbb{Z}),\\
 & c_1\mapsto \frac{n}{2}c_1,\quad c_2\mapsto \frac{n}{2}c_{2}+{{n/2}\choose{2}}c_1^2=\frac{n}{2}c_{2}+\frac{n(n-2)}{8}c_1^2.
 \end{split}
 \end{equation}
 In particular,
 $$\Delta^{*}((n-1)c_1^2-2nc_2)=(n-1)(\frac{n}{2})^{2}c_1^2-2n[\frac{n}{2}c_{2}+\frac{n(n-2)}{8}c_1^2]
 =(\frac{n}{2})^{2}(c_1^2-4c_2).$$
 Notice that when $n=2$, $P^{*}(e_2)=c_1^2-4c_2$, and the equation above implies $(\Delta')^{*}(e_2)=(\frac{n}{2})^{2}e$. When $n=4l+2$, we have $(\Delta')^{*}(e_{2}x_1^2)=(2l+1)^{2}e_{2}x_1^2=e_{2}x_1^2\in H^{10}(BPU_{2};\mathbb{Z})$ which is non-zero by Proposition \ref{BSO_3}, and (2) follows.
\end{proof}

\begin{proof}[Proof of \ref{it:c7}, \ref{it:c8}]
 The relevant entries in ${{^U}E}_{2}^{*,*}$ are ${{^U}E}_{2}^{3,6}$ and ${{^U}E}_{2}^{9,0}$, the latter being isomorphic to $H^9(K(\mathbb{Z},3);\mathbb{Z})\cong\mathbb{Z}/2$ and generated by $x_1^3$. We study the localization of ${{^U}E}_{*}^{*,*}$ at each prime $p$ separately.

 Case 1. $p=2$. Again we consider only the case that $n$ is even. When $n>2$, with respect to the basis $\{c_{1}^{3}x_{1}, c_{1}c_{2}x_{1}, c_{3}x_{1}\}$ for ${{^U}E}_{2}^{3,6}$ and $\{c_{1}^{2}y_{2,0}, c_{2}y_{2,0}\}$ for ${{^U}E}_{2}^{6,4}$, the differential ${{^U}d}_{3}^{3,6}: {{^U}E}_{2}^{3,6}\rightarrow {{^U}E}_{2}^{6,4}$ is represented by the following matrix:
 \begin{equation}\label{k=9 p=2 matrix1}
  \left[ \begin{array}{ccc}
  3n & n-1 & 0 \\
  0 & n & n-2
  \end{array} \right].
 \end{equation}
 Since $y_{2,0}$ is of order $2$, and $n$ is even, (\ref{k=9 p=2 matrix1}) implies that $\operatorname{Ker}{{^U}d}_{3}^{3,6}$ has a basis $\{c_{1}^{3}x_{1}, 2c_{1}c_{2}x_{1}, c_{3}x_{1}\}$. Taking $\{y_{2,1}\}$ as a basis for ${{^U}E}_{2}^{10,0}$, ${{^U}d}_{7}^{3,6}$ is represented by the matrix

 \begin{equation}\label{k=9 p=2 matrix2}
  \left[ \begin{array}{ccc}
  \frac{n^3}{2} & \frac{n^{2}(n-1)}{2} & \frac{n(n-1)(n-2)}{12}
  \end{array} \right].
 \end{equation}
All three entries being even, we have $\operatorname{Ker}{{^U}d}_{3}^{3,6}=\operatorname{Ker}{{^U}d}_{7}^{3,6}=\mathbb\{c_{1}^{3}x_{1}, 2c_{1}c_{2}x_{1}, c_{3}x_{1}\}$.
 With this basis and $\{c_{1}^{4}, c_{1}^{2}c_{2}, c_{1}c_{3}, c_{2}^{2}, c_{4}\}$ as a basis for ${{^U}E}_{2}^{0,8}$, ${{^U}d}_{3}^{0,8}: {{^U}E}_{2}^{0,8}\rightarrow \operatorname{Ker}{{^U}d}_{7}^{3,6}\subset {{^U}E}_{7}^{3,6}$ is represented by the following matrix:

 \begin{equation}\label{k=9 p=2 matrix3}
  \left[ \begin{array}{ccccc}
  4n & n-1 & 0 & 0 & 0 \\
  0 & n & \frac{n-2}{2} & n-1 & 0 \\
  0 & 0 & n & 0 & n-3
 \end{array} \right]
 \end{equation}
 We apply an invertible column operation on it as follows:
 \begin{equation}\label{k=9 p=2 matrix4}
 \begin{split}
 &  \left[ \begin{array}{ccccc}
  4n & n-1 & 0 & 0 & 0 \\
  0 & n & \frac{n-2}{2} & n-1 & 0 \\
  0 & 0 & n & 0 & n-3
 \end{array} \right]\cdot
  \left[ \begin{array}{ccccc}
  1 & 0               & 0                   & 0 & 0 \\
  0 & 1               & 0                   & 0 & 0\\
  0 & 0               & 1                   & 0 & 0\\
  0 & -\frac{n}{n-1} & -\frac{n-2}{2(n-1)} & 1 & 0\\
  0 & 0               & 0                   & 0 & 1
 \end{array} \right]\\ &=
 \left[ \begin{array}{ccccc}
  4n & n-1 & 0 & 0 & 0 \\
  0 & 0 & 0 & n-1 & 0 \\
  0 & 0 & n & 0 & n-3
 \end{array} \right].
 \end{split}
 \end{equation}
 Therefore ${{^U}d}_{3}^{0,8}: {{^U}E}_{2}^{0,8}\rightarrow \operatorname{Ker}{{^U}d}_{7}^{3,6}$ is onto and ${{^U}E}_{\infty}^{3,6}=0$.

 In the exceptional case $n=2$, we have $c_{3}, c_{4}=0$, and (\ref{k=9 p=2 matrix2}), (\ref{k=9 p=2 matrix3}), are respectively reduced to

 \begin{equation*}
  \left[ \begin{array}{cc}
  4 & 2
  \end{array} \right],
  \left[ \begin{array}{ccc}
  8 & 1 & 0 \\
  0 & 2 & 1
 \end{array} \right].
 \end{equation*}
 Consequently, ${{^U}d}_{3}^{0,8}: {{^U}E}_{2}^{0,8}\rightarrow \operatorname{Ker}{{^U}d}_{7}^{3,6}$ is onto as well.
 It remains to consider
 $${{^U}d}_{9}^{0,8}: {{^U}E}_{9}^{0,8}\rightarrow {{^U}E}_{9}^{9,0}\cong {{^U}E}_{2}^{9,0}=\mathbb{Z}/2\{x_{1}^{3}\},$$
 Since ${{^U}E}_{9}^{9,0}\cong\mathbb{Z}/2$ is generated by $x_{1}^{3}$, ${{^U}d}_{9}^{0,8}$ is either surjective or $0$, depending on whether $x_{1}^{3}$ is $0$ or not. By Lemma \ref{k=9lemma1}, $x_{1}^3\neq 0$ when $n$ is even. Therefore ${{^U}d}_{9}^{0,8}=0$.

 Summarising all above, when $n$ is even, ${H^{9}(BPU_{n};\mathbb{Z})}_{(2)}=\mathbb{Z}/2$ is generated by $x_{1}^3$, which proves \ref{it:c7}.  We proceed to study the cup products in ${H^{9}(BPU_{n};\mathbb{Z})}_{(2)}$. Since ${{^U}E}_{\infty}^{3,6}=0$, $x_{1}e_3=x_{1}^3$ or $x_{1}e_3=0$. This merely depends on a choice of $e_3$: if the former case is true, then we simply replace $e_3$ by $e_3+x_1^2$ to obtain the latter case. Therefore the $2$-local case of \ref{it:c8} follows.

 Case 2. $p=3$. Since $^UE_2^{6,4}$ is $2$-torsion, we have
 \begin{equation*}
 {^UE}_{5}^{3,6}={^UE}_{3}^{3,6}/\operatorname{Im}{^Ud}_{3}^{0,8}.
 \end{equation*}

 When $n>3$, we take basis $\{c_{1}^{4}, c_{1}^{2}c_{2}, c_{1}c_{3}, c_{2}^{2}, c_{4}\}$ for ${{^U}E}_{3}^{0,8}$ and $\{c_{1}^{3}x_{1}, c_{1}c_{2}x_{1}, c_{3}x_{1}\}$ for ${{^U}E}_{2}^{3,6}$. Then the matrix representing ${{^U}d}_{3}^{0,8}$ is the following:
 \begin{equation}\label{k=9 p=3 matrix1}
  \left[ \begin{array}{ccccc}
  4n & n-1 & 0   & 0      & 0 \\
  0  & 2n  & n-2 & 2(n-1) & 0 \\
  0  & 0   & n   & 0      & n-3
 \end{array} \right].
 \end{equation}

 We consider only the case $3|n$, in which we apply an invertible column operation to \ref{k=9 p=3 matrix1} and obtain
 \begin{equation}\label{k=9 p=3 matrix2}
 \begin{split}
 & \left[ \begin{array}{ccccc}
  4n & n-1 & 0   & 0      & 0 \\
  0  & 2n  & n-2 & 2(n-1) & 0 \\
  0  & 0   & n   & 0      & n-3
 \end{array} \right]\cdot
 \left[ \begin{array}{ccccc}
  1 & 0              & 0                   & 0 & 0 \\
  0 & 1              & 0                   & 0 & 0 \\
  0 & 0              & 1                   & 0 & 0 \\
  0 & -\frac{n}{n-1} & -\frac{n-2}{2(n-1)} & 1 & 0\\
  0 & 0              & 0                   & 0 & 1
 \end{array} \right]\\=&
 \left[ \begin{array}{ccccc}
  4n & n-1 & 0 & 0      & 0 \\
  0  & 0   & 0 & 2(n-1) & 0 \\
  0  & 0   & n & 0      & n-3
 \end{array} \right],
 \end{split}
 \end{equation}
In the exceptional case $n=3$, the vanishing of $c_4$ reduces the matrices (\ref{k=9 p=3 matrix1}) and (\ref{k=9 p=3 matrix2}) to
 \begin{equation*}
  \left[ \begin{array}{ccccc}
  4n & n-1 & 0   & 0       \\
  0  & 2n  & n-2 & 2(n-1)  \\
  0  & 0   & n   & 0
 \end{array} \right]\quad\textrm{and}\quad
 \left[ \begin{array}{ccccc}
  4n & n-1 & 0 & 0       \\
  0  & 0   & 0 & 2(n-1)  \\
  0  & 0   & n & 0
 \end{array} \right],
 \end{equation*}
 respectively.

 Summarizing the above, given suitable choices of basis we may express $^Ud_3^{0,8}$ in terms of matrices
 as follows:
 \begin{equation*}
 \begin{cases}
  {{^U}d}_{3}^{0,8}=
\begin{bmatrix}
  4n & n-1 & 0 & 0      & 0 \\
  0  & 0   & 0 & 2(n-1) & 0 \\
  0  & 0   & n & 0      & n-3
 \end{bmatrix},\
 n>3,\\
  {{^U}d}_{3}^{0,8}=
 \begin{bmatrix}
  4n & n-1 & 0 & 0       \\
  0  & 0   & 0 & 2(n-1)  \\
  0  & 0   & n & 0
 \end{bmatrix},\
 n=3.
 \end{cases}
 \end{equation*}
In both cases it is elementary to verify
 \begin{equation}\label{UE53,6}
  {^UE}_{5}^{3,6}\cong{^UE}_{2}^{3,6}/\operatorname{Im}{{^U}d}_{3}^{0,8}\cong\mathbb{Z}/3 \textrm{ generated by } c_{3}x_{1}.
 \end{equation}
 The only nontrivial target of any differential with domain ${{^U}E}_{2}^{3,6}$ is ${{^U}E}_{2}^{8,2}$, generated by $c_1y_{3,0}$, a $3$-torsion element. The differential ${{^U}E}_{5}^{3,6}\xrightarrow{{^Ud}_{5}^{3,6}} {^UE}_{5}^{8,2}$
 is given by the following lemma, of which the proof we postpone until after this one.
\begin{lemma}\label{lem:d_5^(3,6)}
Assuming $3|n$, the differential ${^Ud}_{5}^{3,6}:{{^U}E}_{5}^{3,6}\rightarrow{^UE}_{5}^{8,2}$ is an isomorphism.
\end{lemma}
Therefore, in the $3$-local case we have
 \[{^UE}_{\infty}^{3,6}\cong{^UE}_{6}^{3,6}=0,\]
 and
 \[H^9(BPU_n;\mathbb{Z})_{(3)}=0.\]
 This proves the $3$-local case of \ref{it:c8}.

 Case 3. $p>3$. By Theorem \ref{2p+2} there is no nontrivial torsion class in this case, and we conclude the proof of \ref{it:c7} and \ref{it:c8}.
\end{proof}

 \begin{proof}[Proof of Lemma \ref{lem:d_5^(3,6)}] Since everything in sight is $3$-local, for degree reasons, there is no nontrivial differential ${^TE}_{r}^{*,*}$ , $r<5$, landing in ${^TE}_{*}^{8,2}$. It follows from Proposition \ref{main} that we have the following commutative diagram:
 \begin{equation}\label{Td3,6}
 \begin{tikzcd}
 {^UE}_{5}^{3,6}\arrow{r}{{^Ud}_{5}^{3,6}}\arrow{d}{\Psi^*}&{^UE}_{5}^{8,2}\arrow{d}{\Psi^*}\\
 {^TE}_{5}^{3,6}\arrow{r}{^Td_{5}^{3,6}}&{^TE}_{5}^{8,2}
  \end{tikzcd}
 \end{equation}
 where the vertical arrows $\Psi^*$ are induced by the inclusion of maximal tori, sending a universal Chern $c_i$ to the $i$th elementary symmetric polynomial in $\{v_i\}_{1\leq i\leq n}$, and restricts to the identity on $H^*(K(\mathbb{Z},3);\mathbb{Z})$.

 On the other hand, by (\ref{UE53,6}) it suffices to consider ${^Ud}_{5}^{3,6}(c_3x_1)$. To apply diagram (\ref{Td3,6}), we recall (Proposition \ref{diff2}) that in ${^TE}_{*}^{*,*}$ we have permanent cocycles
 $\{v_i'=v_i-v_n\in {^TE}_{\infty}^{0,2}|1\leq i\leq n\}$.
 Therefore, we have
 \begin{equation}\label{c3x1}
 \begin{split}
  &\Psi^*{^Ud}_{5}^{3,6}(c_3x_1)={^Td}_{5}^{3,6}\Psi^*(c_3x_1)\\
 =&{^Td}_{5}^{3,6}(\sum_{1\leq i<j<k\leq n}v_iv_jv_k x_1)\\
 =&{^Td}_{5}^{3,6}[\sum_{1\leq i<j<k\leq n}(v_i'+v_n)(v_j'+v_n)(v_k'+v_n)x_1]\\
 =&{^Td}_{5}^{3,6}\{\sum_{1\leq i<j<k\leq n}[v_i'v_j'v_k'x_1+(v_i'v_j'+v_j'v_k'+v_k'v_i')v_nx_1+\\
 &(v_i'+v_j'+v_k')v_n^2x_1+v_n^3x_1]\}.
 \end{split}
 \end{equation}
 Notice that $v_i'v_j'v_k' x_1$ is a permanent cocycle since all of its factors are. The same holds for $(v_i'v_j'+v_j'v_k'+v_k'v_i')v_nx_1$: Indeed, in the spectral sequence $^KE_*^{*,*}$, we consider the element $vx_1\in{^KE}_2^{3,6}$, where $v\in {^KE}_2^{0,2}$ is the fundamental class of $H^2(K(\mathbb{Z},2);\mathbb{Z})$.
 We have
 \[^Kd_3^{3,6}(vx_1)=x^2_1\in{^KE}_3^{6,0},\]
 a $2$-torsion, which means $2vx_1\in {^KE}_{4}^{3,2}$, and moreover, for obvious degree reasons, it is a permanent cocycle. Then it follows from Proposition \ref{diff1} that the same is true for $v_nx_1$ in ${^TE}_2^{3,6}$. Furthermore, in the spectral sequence $^KE_*^{*,*}$ we have
 \[^Kd_5^{3,6}(v^3x_1)=0,\]
 since in $^KE_5^{3,6}$, the element $v^3x_1$ is $4$-torsion whereas the target of $^Kd_5^{3,6}$ is isomorphic to $\mathbb{Z}/3$. By Proposition \ref{diff1}, the same holds for $v_n^3x_1$. Summarizing the above, we have
 \begin{equation}\label{vanishingd5}
 {^Td}_{5}^{3,6}(v_i'v_j'v_k' x_1)={^Td}_{5}^{3,6}((v_i'v_j'+v_j'v_k'+v_k'v_i')v_nx_1)={^Td}_{5}^{3,6}(v_n^3x_1)=0.
 \end{equation}
 On the other hand, recall from Corollary \ref{diff0} that in $^KE_*^{*,*}$ we have $^Kd_{5}^{3,4}(v^2x_1)=y_{3,0}$. By Proposition \ref{diff1}, we have
 \begin{equation}\label{vn^2x1}
 ^Td_{5}^{3,4}(v_n^2x_1)=y_{3,0}.
 \end{equation}
 We resume the computation (\ref{c3x1}):
 \begin{equation}\label{c3x1cont}
 \begin{split}
 &\Psi^*{^Ud}_{5}^{3,6}(c_3x_1)\\
 =&{^Td}_{5}^{3,6}\{\sum_{1\leq i<j<k\leq n}[v_i'v_j'v_k'x_1+(v_i'v_j'+v_j'v_k'+v_k'v_i')v_nx_1+\\
 &(v_i'+v_j'+v_k')v_n^2x_1+v_n^3x_1]\} \\
 =&{^Td}_{5}^{3,6}[\sum_{1\leq i<j<k\leq n}(v_i'+v_j'+v_k')v_n^2x_1]\textrm{ (because of (\ref{vanishingd5}))}\\
 =&\sum_{1\leq i<j<k\leq n}(v_i'+v_j'+v_k'){^Td}_{5}^{3,4}(v_n^2x_1)\\
 =&\sum_{1\leq i<j<k\leq n}(v_i'+v_j'+v_k')y_{3,0} \textrm{ (because of (\ref{vn^2x1}))}\\
 =&\sum_{1\leq i<j<k\leq n}(v_i+v_j+v_k)y_{3,0}\textrm{ (since $y_{3,0}$ is $3$-torsion)},
 \end{split}
 \end{equation}
 which is symmetric in the variables $v_i$, $1\leq i\leq n$. For degree reasons it has to be $mc_1y_{3,0}$ for some integer $m$. To determine $m$, consider the chain of maps
 \[K(\Z,2)=BS^1\rightarrow BT^n\rightarrow BU_n\]
 induced by the inclusions of Lie groups. The restriction on cohomology of the composition takes $mc_1$ to $mnv$. On the other hand, the restriction of $K(\Z,2)=BS^1\rightarrow BT^n$ takes $\sum_{1\leq i<j<k\leq n}(v_i+v_j+v_k)$ to $3{n\choose 3}v$. A comparison then shows $mn=3{n\choose 3}$, or equivalently
 \begin{equation}\label{mc1}
 m=3{n\choose 3}n^{-1}=\frac{(n-1)(n-2)}{2}.
 \end{equation}
 By (\ref{mc1}) and the injectivity of ${^UE}_{5}^{8,2}\xrightarrow{\Psi^*} {^TE}_{5}^{8,2}$ we have
 \begin{equation}\label{c3x1final}
 {^Ud}_{5}^{3,6}(c_3x_1)=\frac{(n-1)(n-2)}{2}c_1y_{3,0}=c_1y_{3,0},
 \end{equation}
 since $y_{3,0}$ is $3$-torsion and we assume $3|n$, and the lemma follows.
 \end{proof}

\begin{proof}[Proof of \ref{it:c9}, \ref{it:c10}]
  Case 1. $p=2$. In this case the relevant entries are ${{^U}E}_{2}^{6,4}$ and ${{^U}E}_{2}^{10,0}$. First suppose that $n>2$. Fix a basis $\{c_{1}^{2}y_{2,0}, c_{2}y_{2,0}\}$ for ${{^U}E}_{2}^{6,4}$ and $\{c_{1}x_{1}y_{2,0}\}=\{c_{1}x_{1}^3\}$ for ${{^U}E}_{2}^{9,2}$, and ${{^U}d}_{3}^{6,4}: {{^U}E}_{2}^{6,4}\rightarrow {{^U}E}_{2}^{9,2}$ is represented by the matrix
 \begin{equation}
 \left[\begin{array}{cc}
 2n & n-1
 \end{array}\right],
 \end{equation}
 which shows that $\operatorname{Ker}{{^U}d}_{3}^{6,4}$ is generated by $\{c_{1}^{2}y_{2,0}\}$. On the other hand, by (\ref{k=9 p=2 matrix1}), $\operatorname{Im}{{^U}d}_{3}^{3,6}$ is also generated by $\{c_{1}^{2}y_{2,0}\}$. Therefore
 \begin{equation}\label{k=10(6,4)}
 {{^U}E}_{\infty}^{6,4}\cong {{^U}E}_{5}^{6,4}\cong\operatorname{Ker}{{^U}d}_{3}^{6,4}/\operatorname{Im}{{^U}d}_{3}^{3,6}=0.
 \end{equation}
 By (\ref{k=9 p=2 matrix2}) ${{^U}d}_{7}^{3,6}=0$. Hence
 \begin{equation}\label{k=10(10,0)}
 {{^U}E}_{\infty}^{10,0}\cong {{^U}E}_{2}^{10,0}/\operatorname{Im}{{^U}d}_{7}^{3,6}\cong {{^U}E}_{2}^{10,0}\cong\mathbb{Z}/2.
 \end{equation}
 is generated by $x_{1}y_{2,0}$. In the exceptional case $n=2$, $c_{3}=0$ and all the arguments above hold as well, since all the differentials of $c_{3}x_{1}$ are $0$ anyway.

 Therefore, the torsion component of ${H^{10}(BPU_{n}; \mathbb{Z})}_{(2)}$ is isomorphic to $\mathbb{Z}/2$, generated by $y_{2,1}$ when $n$ is even.

 Case 2. $p=3$. We take $3|n$. In this case the only relevant entry is ${{^U}E}_{2}^{8,2}$. By previous calculation (\ref{c3x1final}) we see that ${^Ud}_{5}^{3,6}$ is onto ${{^U}E}_{2}^{8,2}$. Therefore we have ${{^U}E}_{\infty}^{8,2}=0$, and $H^{10}(BPU_{n}; \mathbb{Z})$ is $3$-torsion-free.

 Case 3. $p>3$. By Theorem \ref{2p+2} there is no nontrivial $p$-torsion element in degree $10$.

 It remains to show that $e_2e_3$ generates a $\mathbb{Z}$-summand when $n\geq3$. A close look at the differential ${^Ud}_3^{0,6}$ shows that $e_3$ is represented by the permanent cocycle
 \[\epsilon_2(n)^{-1}\epsilon_3(n-1)^{-1}\epsilon_3(n-2)^{-1}[3n^2c_3-3n(n-2)c_1c_2-(n-1)(n-2)c_1^3]
 \in{^UE}_{\infty}^{0,6}.\]
 Then it follows from (\ref{e2}) that $e_2e_3$ is represented by the permanent cocycle
 \begin{equation*}
 \begin{split}
 &\epsilon_2(n-1)^{-1}[2nc_{2}-(n-1)c_{1}^{2}]\cdot\\
 &\epsilon_2(n)^{-1}\epsilon_3(n-1)^{-1}\epsilon_3(n-2)^{-1}[3n^2c_3-3n(n-2)c_1c_2-(n-1)(n-2)c_1^3]\\
 =&\epsilon_2(n-1)^{-1}\epsilon_2(n)^{-1}\epsilon_3(n-1)^{-1}\epsilon_3(n-2)^{-1}\cdot\\
 &[6n^3c_2c_3-3n^2(n-1)c_1^2c_3+(n-2)(n-1)nc_1^3c_2-6(n-2)n^2c_1c_2^2+\\
 &(n-2)(n-1)^2c_1^5.]
 \end{split}
 \end{equation*}
 Comparing the coefficients, we see that it cannot be divided by any integer other than $\pm1$. Therefore $e_2e_3$ is a generator.

 The assertion \ref{it:c9} follows from the discussion above.

 We proceed to study cup products in $H^{10}(BPU_{n}; \mathbb{Z})$. Recall that when $n$ is even,  $H^{4}(BPU_{n}; \mathbb{Z})\cong\mathbb{Z}$ is generated by an element $e_2$ such that $P^{*}(e_2)=2nc_2-(n-1)c_{1}^{2}\in H^{4}(BU_{n}; \mathbb{Z})$. By (\ref{k=10(6,4)}), $e_{2}x_{1}^2\in{{^U}E}_{\infty}^{6,4}$ is a coboundary.  Therefore, we have either $e_2x_1^2=0$,  or $e_2x_{1}^2=y_{2,0}$. The former is true when $4|n$, because in this case we have $e_2x_1=0$ as  verified earlier.  When $n\equiv 2\pmod{4}$, by Lemma \ref{k=9lemma1}, the former is false, and we have the latter. Therefore we proved \ref{it:c10}.
\end{proof}

\section{On Primitive Elements of $H^*(BU_n;\mathbb{Z})$}\label{On Primitive}
Recall from Section 1 that the space $BU_n$, as the homotopy fiber of the map $BPU_n\rightarrow K(\mathbb{Z},3)$, admits an action by $BS^1\simeq K(\mathbb{Z},2)$:
\[\mu: K(\mathbb{Z},2)\times BU_n\rightarrow BU_n.\]

We have the following
\begin{proposition}[Proposition 3.2, \cite{To}, Toda, 1987]\label{coaction}
The ring homomorphism
\[\mu^*: H^*(BU_n;\mathbb{Z})\rightarrow H^*(K(\mathbb{Z},2);\mathbb{Z})\otimes_{\Z} H^*(BU_n;\mathbb{Z})\]
is determined by
\[\mu^*(c_k)=\sum_{i=0}^{k}v^i\otimes{n-k+i\choose i}c_{k-i}.\]
\end{proposition}
Recall from Section 1 that the primitive elements forms a subring of $H^*(BU_n;\mathbb{Z})$ denoted by $PH^*(BU_n;\mathbb{Z})$ which contains $\operatorname{Im}P^*$ as a subring, where $P: BU_n\rightarrow BPU_n$ is the map associated to the quotient map $U_n\rightarrow PU_n$.

Recall from Section \ref{The Higher Diff} that we have a homomorphism
\[\nabla: H^*(BU_n;\mathbb{Z})\rightarrow H^{*-2}(BU_n;\mathbb{Z})\]
given by Corollary \ref{diff4}:
\begin{equation}\label{nabla}
\nabla(c_k)=(n-k+1)c_{k-1}
\end{equation}
satifying the Leibniz rule. Then Proposition \ref{coaction} implies
\[\mu^*(x)=1\otimes x+v\otimes\nabla(x)+ \textrm{terms with higher orders of $v$}.\]
Therefore we have
\begin{equation}\label{imageprimitivenabla}
\operatorname{Im}P^*\subset PH^*(BU_n;\mathbb{Z})\subset\operatorname{Ker}\nabla=\operatorname{Ker}{^Ud}_3^{0,*}.
\end{equation}
This enables us to give the following
\begin{proof}[Proof of Theorem \ref{primitive}]
The computation in Section \ref{1<k<6}, \ref{k=7,8} and \ref{k=9,10} shows that $^Ud_r^{0,t}=0$ for all $r>3$ and $t\leq 8$. It remains to show $^Ud_r^{0,t}=0$ for $r>3$ and $t=10, 12$.

When $t=12$, for obvious degree reasons we only need to check the cases $r=9,11$ and $13$.

For $r=9, 11$, a routine computation of $^Ud_3^{*,*}$ shows that the sequences
\[{^UE}_3^{6,6}\xrightarrow{^Ud_3}{^UE}_3^{9,4}\xrightarrow{^Ud_3}{^UE}_3^{12,2}\]
\[{^UE}_3^{8,4}\xrightarrow{^Ud_3}{^UE}_3^{11,2}\xrightarrow{^Ud_3}{^UE}_3^{14,0}\]
are exact. Therefore we have ${^UE}_4^{9,4}=0$ and ${^UE}_4^{11,2}=0$. Therefore we have ${^Ud}_9^{0,12}=0$ and ${^Ud}_{11}^{0,12}=0$.

For $r=13$, notice that the group
\[{^UE}_2^{13,0}\cong H^{13}(K(\mathbb{Z},3);\mathbb{Z})\cong\mathbb{Z}/2\]
is generated by $x_1y_{2,1}$.

For $n$ odd, $x_1y_{2,1}$ is in the image of ${^Ud}_3^{10,2}$ and there is nothing else to prove.

For $n$ even, recall the map $\Delta': BPU_2\rightarrow BPU_n$ discussed in Section \ref{k=9,10}.
Let \[\rho_2: H^*(-;\mathbb{Z})\rightarrow H^*(-;\mathbb{Z}/2)\] be the canonical reduction. Then it follows from Lemma \ref{BSO_3} and equation (\ref{BSO_3mod2}) that we have $(\Delta')^*\rho_2(x_1)=\omega_3$ and $(\Delta')^*\rho_2(y_{2,1})=(\operatorname{Sq}^2(\omega_3))^2=\omega_2^2\omega_3^2$. Therefore we have
\[(\Delta')^*\rho_2(x_1y_{2,1})=\omega_2^2\omega_3^3\neq 0.\]
Hence there is no nontrivial differential into ${^UE}_*^{13,0}$. In particular, we have ${^Ud}_{13}^{0,12}=0$.

When $t=10$, the proof is very similar: we only need to check $^Ud_r^{0,12}$ for $r=9$ and $r=11$.

For $r=9$, again we have an exact sequence
\[{^UE}_3^{6,4}\xrightarrow{^Ud_3}{^UE}_3^{9,2}\xrightarrow{^Ud_3}{^UE}_3^{12,0}\]
which yields ${^UE}_4^{9,2}=0$. Therefore $^Ud_9^{0,10}=0$.

For $r=11$, the group
\[{^UE}_2^{11,0}\cong H^{11}(K(\mathbb{Z},3);\mathbb{Z})\cong\mathbb{Z}/3\]
is generated by $x_1y_{3,0}$.

For $3\nmid n$, one readily verifies $x_1y_{3,0}\in\operatorname{Im}{^Ud}_3^{8,2}$ and therefore ${^UE}_4^{11,0}=0$. Hence there is nothing more to show.

For $3|n$, we have a ``diagonal'' map
\[\Delta': BPU_3\rightarrow BPU_n\]
similar to the one considered in Lemma \ref{BSO_3}. In particular, we have
\begin{equation}\label{x1tox1}
\Delta'^*(x_1)=x_1.
\end{equation}
(Beware that the $x_1$'s on both sides of the equation are not the same, as they live in different groups, unless $n=3$.) Let
\[\rho_3: H^*(-;\mathbb{Z})\rightarrow H^*(-;\mathbb{Z}/3)\]
be the canonical reduction map, and $\mathscr{P}^i$ denote the $i$-th Steenrod power operation for $p=3$. Finally let $B$ be the Bockstein homomorphism. Then it is easy to deduce the following from Proposition \ref{K(Z,3)general}:
\[\rho_3(y_{3,0})=B\mathscr{P}^1(\rho_3(x_1)).\]
It follows from Theorem 4.11 of \cite{Ko1} (Kono, Mimura and Shimada, 1975) that we have $\rho_3(x_1)B\mathscr{P}^1(\rho_3(x_1))\neq 0$ for $n=3$. Equation (\ref{x1tox1}) shows that this is the case for all $n$ such that $3|n$. In other words, we have
$\rho_3(x_1y_{3,0})\neq 0$ for $3|n$. This shows that $x_1y_{3,0}$ is a nontrivial $3$-torsion in $^UE_{\infty}^{11,0}$. Therefore, it is not in the image of any differential. In particular, we have $^Ud_{11}^{0,10}=0$.

We have shown $\operatorname{Ker}{^Ud}_3^{0,t}\subset\operatorname{Im}P^*$ for $t\leq 12$. The theorem then follows from (\ref{imageprimitivenabla}).
\end{proof}

In fact, the assertion $PH^*(BU_n;\mathbb{Z})\subset\operatorname{Ker}\nabla$ may be improved. The following proposition is motivated by Crowley's consideration on primitive elements:
\begin{proposition}\label{primitivenabla}
As subrings of $H^*(BU_n;\mathbb{Z})$, the following holds:
\[PH^*(BU_n;\mathbb{Z})=\operatorname{Ker}\nabla.\]
\end{proposition}
\begin{proof}
It is straight forward to verify
\[\mu^*(c_k)=\sum_{i=0}^{k}v^i\otimes{n-k+i\choose i}c_{k-i}=\sum_{i=0}^{k}v^i\otimes\frac{\nabla^i}{i!}(c_k)=\operatorname{exp}(\widetilde{\nabla})(1\otimes c_k),\]
where
\[\widetilde{\nabla}: H^*(K(\mathbb{Z},2);\mathbb{Z})\otimes_{\Z} H^*(BU_n;\mathbb{Z})\rightarrow H^*(K(\mathbb{Z},2);\mathbb{Z})\otimes_{\Z} H^*(BU_n;\mathbb{Z})\]
is a homomorphism of abelian groups determined by
\begin{equation*}
\begin{cases}
\widetilde{\nabla}(1\otimes c_k)=v\otimes\nabla(c_{k-1}),\\
\widetilde{\nabla}(v\otimes1)=0,
\end{cases}
\end{equation*}
together with the Leibniz rule. It is formal to verify that $\operatorname{exp}(\widetilde{\nabla})$ is a ring homomorphism. Therefore we have $\mu^*=\operatorname{exp}(\widetilde{\nabla})$. For any homogeneous element $x\in H^*(BU_n;\mathbb{Z})$, clearly we have $\widetilde{\nabla}(1\otimes x)=0$ if and only if $\nabla(x)=0$, and the proposition follows.
\end{proof}
We proceed to offer a way to study $^UE_*^{*,*}$ by using known results on primitive elements. It follows from Corollary 4.3 and Section 4.5 of \cite{To} (Toda, 1987) that we have
\begin{equation}\label{Toda}
\operatorname{Im}P^*=PH(BU_n;\mathbb{Z}/2)
\end{equation}
for $n\equiv 2\pmod{4}$ and $n=4$.

Let $^UE_{*}^{*,*}(\mathbb{Z}/2)$ be the spectral sequence $^UE_{*}^{*,*}$ with coefficient ring $\mathbb{Z}$ replaced by $\mathbb{Z}/2$. Then we have the following
\begin{lemma}\label{E(Z2)}
For $t>0$, the entry $^UE_{*}^{9,t}(\mathbb{Z}/2)$ is the target of a nontrivial differential if and only if $^UE_{*}^{9,t}$ is.
\end{lemma}
\begin{proof}
Recall that $H^*(K(\mathbb{Z},3);\mathbb{Z}/2)$ is the polynomial algebra generated by the elements $\operatorname{Sq}^I(\rho_2(x_1))$ where $x_1$ is the fundamental class and $I$ an admissible sequence of excess$<3$. (See Chapter 3 and 9 of \cite{Mo}, Mosher and Tangora, 1968, for details.) Moreover, the subalgebra $\operatorname{Im}\rho_2$ is generated by $\rho_2(x_1)$ and $[\operatorname{Sq}^I(\rho_2(x_1))]^2$ where $I\neq\phi$. Then in degree less than $9$, the only elements of $H^*(K(\mathbb{Z},3);\mathbb{Z}/2)$ not in $\operatorname{Im}\rho_2$ are $\operatorname{Sq}^2(\rho_2(x_1))$ and $\rho_2(x_1)\operatorname{Sq}^2(\rho_2(x_1))$, of degree $5$ and $8$ respectively. The lemma then follows for obvious degree reasons.
\end{proof}
We have the following proposition regarding the differentials of $^UE_{*}^{*,*}$
\begin{proposition}
For $n\equiv 2\pmod{4}$ and $n=4$, $^Ud_{9}^{0,t}=0$ for all $t>0$.
\end{proposition}
\begin{proof}
It follows from ((\ref{Toda}), Toda, 1987) that this is true in the spectral sequence $^UE_{*}^{9,t}(\mathbb{Z}/2)$. The proposition then follows from Lemma \ref{E(Z2)}.
\end{proof}
\appendix
\section{Multiplicative Constructions and Twisted Tensor Products}
 This appendix is a review of part \emph{S{\'e}minaire Henri Cartan} required for Section \ref{DG Alg}, with twisted tensor products (\cite{Br}, Brown, 1959) as an example. The author makes no claim of originality to materials in this appendix. As in the introduction, all DGA's involved are graded-commutative and augmented over the base ring $R$, which is either $\mathbb{Z}$ or $\mathbb{Z}/p$ for some prime $p$.

\begin{definition}\label{mult}
 A \emph{construction} is a triple $(A, N, M)$ where $A$ is a DGA and $N$ is a DG module, both with augmentations over $R$, and $M$ is a chain complex with a graded algebra structure, satisfying the following conditions:
 \begin{enumerate}
 \item As a graded algebra (not necessarily as a chain complex), we have $M=A\otimes_R N$.
 \item The  differential of $N$ is determined by that of $M$,  via the relation $N=R\otimes_{A}M$ where $A$ acts on $R$ by augmentation.
 \end{enumerate}
An \emph{acyclic construction} is one such that $M$ is acyclic. When $N$ is also a $DGA$, we say that $(A, N, M)$ is a \emph{multiplicative construction}.
\end{definition}

\begin{remark}
 $A$ (resp. $N$) is a sub-DGA of $M$ via $a\mapsto a\otimes 1$ (resp. $n \mapsto 1\otimes n$). We will use this fact implicitly.
\end{remark}

\begin{example}\label{bar}
 Let $A$ be a DGA, and $\epsilon$ be its augmentation. Let $\bar{A}=\operatorname{Ker}\epsilon$. The \emph{bar construction} of $A$ is a multiplicative construction $(A, \overline{\mathscr{B}}(A), \mathscr{B}(A))$ where the DGA's $\mathscr{B}(A)$ and $\overline{\mathscr{B}}(A)$ are defined as follows:

 \begin{enumerate}
 \item \[\mathscr{B}(A)=\sum_{k\geq 0}A\otimes_{R} \bar{A}^{\otimes k}; \overline {\mathscr{B}}(A)=\sum_{k\geq 0}\bar{A}^{\otimes k}.\]
  By convention we have $\bar{A}^{\otimes 0}=A^{\otimes 0}=R$. For simplicity the element $a\otimes a_1\otimes \cdots \otimes a_k$ of $\mathscr{B}(A)$ is denoted by
  \[a[a_1|\cdots |a_k]\]
  and
  \[1 \cdot [a_1|\cdots |a_k]=[a_1|\cdots |a_k].\]

 \item The degree is defined by
  \[\operatorname{deg}(a[a_1|\cdots |a_k])=k+\operatorname{deg}(a)+\operatorname{deg}(a_1)+ \cdots +\operatorname{deg}(k) \]
  and the length of $a[a_1|\cdots |a_k]$ is defined to be $deg([a_1|\cdots |a_k])$. We define a bi-degree on $\mathscr{B}(A)$ by saying that $a[a_1|\cdots |a_k]$ has bi-degree $(s,t)$ if deg$(a)=s$, deg$([a_1|\cdots |a_k])=t$. The degree of $a[a_1|\cdots |a_k]$ is obviously $s+t$. Let $\mathbf{F}_{B}$ be the filtration induced by the first entry $s$ of this bi-degree.

 \item A chain map $s: \mathscr{B}(A) \rightarrow \overline {\mathscr{B}}(A)$ of degree one is defined as follows:
  \[s(a[a_1|\cdots |a_k])=[a|a_1|\cdots |a_k] s(1)=0\]

 \item The differential of $\mathscr{B}(A)$ is defined by induction on $k$ as follows:
  \[
    \begin{cases}
     d([a])=a\cdot 1-[d(a)]-\eta(a)\cdot 1\\
     d([a_1|\cdots|a_k])=a_1[a_2|\cdots|a_k]-sd(a_1[a_2|\cdots|a_k]),k\geq 2
    \end{cases}
  \]
  and the differential of $\overline{\mathscr{B}}(A)$ is induced by the above.
 \item $\overline{\mathscr{B}}(A)$ has a product structure induced by that of $A$, sometimes called the shuffle product, which makes $\overline{\mathscr{B}}(A)$ a DGA with respect to the differential defined above.

 \end{enumerate}
It is obvious and proved in Exp.3 of \cite{Ca} (Cartan and Serre, 1954-1955) that $s$ is a chain homotopy between the identity of $\mathscr{B}(A)$ and the augmentation $\eta$. We can iterate this procedure to construct inductively $\mathscr{B}^{n+1}(A)=\mathscr{B}(\overline{\mathscr{B}}^{n}(A))$ and $\overline{\mathscr{B}}^{n+1}(A)=\overline{\mathscr{B}}(\overline{\mathscr{B}}^{n}(A))$
In the case $A=R[\Pi]$(concentrated in degree 0) where $\Pi$ is an abelian group, we have $H_{*}(\overline{\mathscr{B}}^{n}(R[\Pi]))\cong H_{*}(K(\Pi, n);R)$. This is shown in, for example, \cite{Ei} (Eilenberg and Mac Lane, 1953) and \cite{Mi} (Milgram, 1967).
\end{example}

\begin{example}\label{twisted tensor product}[\cite{Br}, Brown, 1959]
Fix a base ring $R$. Let $K$ be a DG coalgebra and $A$ a DGA. In \cite{Br}, Brown defined the notion of a twisted cochain, i.e., a cochain $\varphi\in C^*(K;A)$ satisfying certain axioms. Now given a differential graded $A$-module $L$, the twisted tensor product $K_{\varphi}\otimes L$ is a chain complex, of which the underlying $R$-module is that of the graded algebra $K\otimes L$. Then $(A, K, K_{\varphi}\otimes A)$ is a multiplicative construction.
\end{example}

The differential of $K_{\varphi}\otimes L$ admits a filtration $\mathbf{G}$ as follows:
\begin{equation}\label{filtrationG}
\mathbf{G}_s(K_{\varphi}\otimes L)=\sum_{i=0}^{s}(K_s\otimes L),
\end{equation}
which induces a spectral sequence $E^*_{*,*}$ such that
\begin{equation}\label{Gspectralseq}
E^2_{s,t}\cong H_s(K;H_t(L)).
\end{equation}
For a map of pointed spaces $\pi: X\rightarrow B$ admitting a weakly transitive function ($\pi$ a Serre fibration, for example), let $F=\pi^{-1}(b_0)$, where $b_0$ is the base point of $B$. Brown defined the loop space $\Omega B$ differently from the convention, such that it is homotopy equivalent to the conventional one, but the composition of loops yields an associative product, instead of that in the conventional case where the product is associative only up to homotopy. Therefore, the based singular complex $S_*(\Omega B)$ is a DGA, and $S_*(F)$ is a $S_*(\Omega B)$ module.
Let $S^{(1)}_*(B)$ be the first Eilenberg sub-complex of $B$ (\cite{Ei1}, Eilenberg, 1944), i.e., the sub-complex of $S_*(B)$ of singular simplices sending all vertices of a standard simplex to $b_0$. Finally let $S^F_*(X)$ be the sub-complex of $S_*(X)$ of singular simplices sending all vertices of a standard simplex to $F$.
Then we have the following
\begin{proposition}\label{Brown}[E. H. Brown, Jr, (4.4) Corollary and (7.2) Corollary of \cite{Br}, Brown, 1959]
There is a twisted cochain $\Phi_B\in C^*(S^{(1)}_*(B), S_*(\Omega B))$ such that there is a chain homotopy equivalence
\begin{equation*}
S^{(1)}_*(B)_{\Phi_B}\otimes S_*(F)\rightarrow S^F_*(X).
\end{equation*}
Moreover, the spectral sequence induced by the filtration $\mathbf{G}$ is isomorphic to the Serre spectral sequence of the fibration $\pi:X\rightarrow B$.
\end{proposition}

We proceed to introduce the operations on an acyclic multiplicative construction $(A,N,M)$ with base ring $R$, following Section 6 to 12 of \cite{Ca} (Cartan and Serre, 1954-1955). Let $d_A, d_N, d_M$ be the differentials of $A,N$ and $M$ respectively. Furthermore, we assume that there are augmentations $\epsilon_{A}, \epsilon_{N}, \epsilon_{M}$ from $A,N,M$ respectively, to the base ring $R$. The subscripts will be omitted whenever there is no risk of ambiguity.

\begin{definition}\label{suspension}
 Assume that the homomorphism $A\rightarrow M: a\mapsto a\otimes 1$ is injective. Let $\alpha\in H_{k}(A)$ be represented by $a\in\operatorname{Ker}d_{A}$. Since M is acyclic, there is some $x\in M$ such that $d_{M}(x)=a$. Passing to $N$, we obtain an element
 \[\bar{x}=1\otimes x\in A\otimes_{R}M\cong N.\]
 where $\bar{x}$ is easily verified to be a cycle in $N$. The homology class $\{\bar{x}\}\in H_{k+1}(N)$ is therefore called the  \emph{suspension } of the homology class $\alpha\in H_{k}(A)$, denoted by $\sigma(\alpha)$. It is easy to verify that $[\bar{x}]$ is independent of the choice of $a$ or $x$ and $\sigma: H_{k}(A)\rightarrow H_{k+1}(N)$ is a well defined homomorphism of graded abelian groups of degree $1$.
\end{definition}

\begin{example}\label{suspensionex}
If $(A,N,M)=(A,\overline{\mathscr{B}}(A),\mathscr{B}(A))$, then the suspension can be realized by the homomorphism $A\rightarrow \mathscr{B}(A), a\mapsto [a]$. Notice that the presence of the bracket lifts the degree by $1$.
\end{example}

\begin{definition}\label{transpotence}
 Let the character of the base ring $R$ be a prime number $p$. Define the following $R$-submodule of of $A_{2q}$ by
 \[{_{p}A_{2q}}=\{a\in A_{2q}|d_{A}(a)=0,a^{p}=(\epsilon(a))^p\}.\]
 Take $x\in M_{2q+1}$ such that $d_{M}(x)=a-\epsilon(a)$. Then we have $d_{M}((a-\epsilon(a))^{p-1}x)=(a-\epsilon(a))^{p}=0$. Now take $y\in M_{2pq+2}$ such that $d(y)=(a-\epsilon(a))^{p-1}x.$ Passing to $N$ and taking homology we define the \emph{transpotence} $\psi_p(a)=\bar{y}\in H_{2pq+2}(N)$.
\end{definition}

Notice that $\psi_p: {_{p}A_{2q}}\rightarrow H_{2pq+2}(N)$ is not necessarily a homomorphism, even of abelian groups, and does not necessarily pass to homology. However we have the following

\begin{proposition}(H. Cartan, Proposition 5, exp. 6, \cite{Ca}, Cartan and Serre, 1954-1955)
 Suppose we have
 \begin{enumerate}
 \item
   $a^{p}=0,$ for all $a\in A_{2q}$, and
 \item
  $b\cdot d_{A}(b^{p-1})$ is in the image of $d_{A}$, for all $b\in A_{2q+1}$.
 \end{enumerate}
 Then $\psi_p$ passes to homology to define a map $\varphi_p: H_{2q}(A)\rightarrow H_{2pq+2}(N).$
\end{proposition}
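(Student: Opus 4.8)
The plan is to show that $\psi$ takes a constant value on each fiber of the quotient $Z_{2q}(A)\to H_{2q}(A)$, where $Z_{2q}(A)$ is the group of $2q$-cycles of $A$; then $\varphi([a]):=\psi(a)$ is the required map. (Only well-definedness is asserted, not additivity — consistently with the remark that $\psi$ need not be a homomorphism.) I would first use hypothesis (1) to pin down the domain: for $q\ge1$ every element of $A_{2q}$ has trivial augmentation, so the condition $(a-\epsilon(a))^p=0$ is automatic and ${}_pA_{2q}=Z_{2q}(A)$; hence $\psi$ is already defined on a complete set of representatives of $H_{2q}(A)$ (the case $q=0$ being trivial). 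So everything comes down to proving $\psi(a+d_Ab)=\psi(a)$ for a cycle $a\in A_{2q}$ and an arbitrary $b\in A_{2q+1}$.

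Recall that $\psi(a')$ is independent of the auxiliary choices, so I may make convenient ones. Fix $x\in M_{2q+1}$ with $d_M(x)=a$ and put $x'=x+b$, so that $d_M(x')=a+d_A(b)=:a'$; note $a'^{\,p}=a^p+(d_Ab)^p=0$ by hypothesis (1) and the identity $(u+v)^p=u^p+v^p$ for commuting elements in characteristic $p$, so $a'\in{}_pA_{2q}$ too, and $a^{p-1}x$ and $a'^{\,p-1}x'$ are cycles (their $d_M$-images being $a^p=0$ and $a'^{\,p}=0$). Choose $y,y'$ with $d_M(y)=a^{p-1}x$ and $d_M(y')=a'^{\,p-1}x'$, so $\psi(a)=[\overline y]$, $\psi(a')=[\overline{y'}]$. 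The core computation is to expand $\Delta:=a'^{\,p-1}x'-a^{p-1}x$ in $M$: using the binomial theorem for $(a+d_Ab)^{p-1}$ with $\binom{p-1}{j}\equiv(-1)^j\pmod p$, every term with $j\ge1$ carries a factor $d_A(b)=d_M(b)$, and the Leibniz rule (together with the fact that $a$ and $d_Ab$ are cycles) rewrites each such term as $d_M$ of an element of $\bar A\cdot M:=\ker(M\to N)$, up to a lower term of the same shape. Collecting the corrections, their coefficients recombine into the binomial coefficients $\binom{p}{i}$, $1\le i\le p-1$, all divisible by $p$, and drop out; what remains is
\[
\Delta\equiv(d_Ab)^{p-1}b \pmod{d_M(\bar A\cdot M)}.
\]

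Since $\psi(a')-\psi(a)=[\overline{y'}-\overline y]=[\overline{y'-y}]$ with $d_M(y'-y)=\Delta$, and since $(d_Ab)^{p-1}b$ is exactly the element occurring in the definition of $\psi(d_Ab)$ (take $b$ itself as a primitive of the cycle $d_Ab$), passing this through the connecting isomorphism $H_{2pq+2}(N)\cong H_{2pq+1}(\bar A\cdot M)$ of the short exact sequence $0\to\bar A\cdot M\to M\to N\to0$ — an isomorphism because $M$ is acyclic — gives the clean identity $\psi(a+d_Ab)-\psi(a)=\psi(d_Ab)$. Thus the proposition reduces to showing that $\psi$ annihilates boundaries, i.e.\ $\psi(d_Ab)=0$ for every $b\in A_{2q+1}$, and this is where I expect the real difficulty. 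Hypothesis (1) makes $(d_Ab)^{p-1}b$ a cycle (its $d_M$-image is $(d_Ab)^p=0$), so its homology class is defined; hypothesis (2) makes that class vanish. For $p=2$ this is transparent: (2) states that $(d_Ab)b=b\cdot d_A(b)$ lies in $\textrm{Im}\,d_A$, so a primitive of $(d_Ab)b$ may be chosen inside $A$, whence its image in $N=R\otimes_A M$ is $0$ because it lies in the augmentation ideal, and $\psi(d_Ab)=0$. For odd $p$ the same conclusion is reached by a parallel, somewhat more delicate argument, again using hypotheses (1) and (2) together with the graded-commutativity relations (notably $b^2=0$) to see that $(d_Ab)^{p-1}b$ bounds inside $\bar A\cdot M$. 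The technical heart of the proof is therefore the bookkeeping in the two displayed reductions — verifying that every correction term in the expansion of $(a+d_Ab)^{p-1}(x+b)$ is either absorbed into $d_M(\bar A\cdot M)$ or killed by the $\binom{p}{i}\equiv0$ identities, and then that the surviving term $(d_Ab)^{p-1}b$ bounds in $\bar A\cdot M$ — and it is precisely at these two junctures that hypotheses (1) and (2) are indispensable.
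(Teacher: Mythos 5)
First, a point of reference: the paper does not prove this proposition at all --- it is quoted from Cartan's seminar (exp.\ 6, Proposition 5) in an appendix that explicitly disclaims originality --- so there is no in-paper argument to compare yours against, and I am judging the proposal on its own terms. The reduction you carry out is correct and, as far as it goes, complete. The identification ${}_pA_{2q}=Z_{2q}(A)$ for $q\geq 1$ is right (the augmentation vanishes in positive degrees, and (1) makes $a^p=0$ automatic); the Leibniz manipulation $a^{p-1-j}(d_Ab)^{j}x\equiv a^{p-j}(d_Ab)^{j-1}b \pmod{d_M(\bar A\cdot M)}$ checks out, the signs working because $a$ and $d_Ab$ are even cycles while $b$ and $x$ are both odd; the recombination $\binom{p-1}{i+1}+\binom{p-1}{i}=\binom{p}{i+1}\equiv 0$ does kill every correction term and leaves exactly $(d_Ab)^{p-1}b$; and the passage through the connecting isomorphism $H_{2pq+2}(N)\cong H_{2pq+1}(\bar A\cdot M)$ is legitimate since $M$ is acyclic and all the elements in play do lie in $\bar A\cdot M$. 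So the proposition genuinely reduces, as you say, to showing $\psi(d_Ab)=0$.

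The one place the argument is not closed is exactly the place you flag: the odd-$p$ case of $\psi(d_Ab)=0$. Two remarks. First, with hypothesis (2) read literally, $b\cdot d_A(b^{p-1})$ is automatically $0$ for odd $p$ in a graded-commutative DGA: in the Leibniz expansion of $d_A(b^{p-1})$ the $p-1$ summands all equal $\pm(d_Ab)\,b^{p-2}$ with alternating signs, and $p-1$ is even, so they cancel (alternatively, $2b^2=0$ and $2$ is invertible in characteristic $p$). So condition (2) as printed gives you nothing for odd $p$, and your appeal to it there is empty. The statement only has content if one reads (2) as ``$b\cdot(d_A(b))^{p-1}\in\mathrm{Im}\,d_A$,'' which is surely what is intended: that is precisely the element whose class in $H_{2pq+1}(\bar A\cdot M)$ represents $\psi(d_Ab)$, and it specializes to $b\,d_A(b)$ when $p=2$. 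Second, under that reading the odd-$p$ case is not ``more delicate'' at all --- it is word for word your $p=2$ argument: $(d_Ab)^{p-1}b=b(d_Ab)^{p-1}=d_A(c)$ for some $c\in A$ of positive degree, hence $c\in\bar A\cdot M$ and the class dies. So either adopt that reading and delete the hedge, or, if you insist on the literal hypothesis, you still owe an actual construction of a primitive of $(d_Ab)^{p-1}b$ inside $\bar A\cdot M$ for odd $p$; ``$b^2=0$'' by itself does not visibly produce one.
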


This $\psi_p$ is again not necessarily additive. But in the case that interests us, we have the following

\begin{proposition}(H. Cartan, Theorem 3, exp. 6, \cite{Ca}, Cartan and Serre, 1954-1955)
 Let $A$ be a commutative $R$-algebra, regarded as a graded-commutative DGA concentrated in degree $0$ over a base ring $R$ of characteristic $p$ where $p$ is a prime. Then $\psi_p: {_{p}A}\rightarrow H_{2}(\overline{\mathscr{B}}(A))$ is additive when $p$ is odd. For all $n\geq 1$ and $q\geq 1$, the transpotence $\varphi_p: H_{2q}(\overline{\mathscr{B}}^{n}(A))\rightarrow H_{2pq+2}(\overline{\mathscr{B}}^{n+1}(A))$ induced by $\psi_p$ is well defined, and if $p$ is odd, it is additive with kernel containing all the decomposable elements of $H_{2q}(\overline{\mathscr{B}}^{n}(A)).$
\end{proposition}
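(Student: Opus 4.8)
The plan is to push everything down to explicit chain-level computations in the bar construction $\mathscr{B}(-)$, with the passage to homology handled by the preceding proposition of Cartan (Prop.~5, exp.~6 of \cite{Ca}). I will treat the three assertions in the order stated: well-definedness of $\varphi$ for $n\ge1$; additivity of $\psi$, hence of $\varphi$, for $p$ odd; and the vanishing of $\varphi$ on decomposables.

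\emph{Well-definedness.} For each $n\ge1$ the bar construction $(\overline{\mathscr{B}}^{n}(A),\overline{\mathscr{B}}^{n+1}(A),\mathscr{B}^{n+1}(A))$ is an acyclic multiplicative construction, since $s$ contracts $\mathscr{B}(-)$ (Example~\ref{bar}). So it suffices to verify, for $B:=\overline{\mathscr{B}}^{n}(A)$, the two hypotheses of Cartan's Proposition~5: $(\mathrm{i})$ $a^{p}=0$ for every $a\in B_{2q}$, and $(\mathrm{ii})$ $b\,d_{B}(b^{p-1})\in\operatorname{Im}d_{B}$ for every $b\in B_{2q+1}$. The shuffle product on $\overline{\mathscr{B}}(-)$ is a divided-power product, so $a^{p}=p!\,\gamma_{p}(a)=0$ in characteristic $p$, which gives $(\mathrm{i})$; for $(\mathrm{ii})$, if $p$ is odd then $b^{2}=0$ by graded commutativity, hence $b^{p-1}=0$, while if $p=2$ then $b\,d_{B}(b)=d_{B}(\gamma_{2}(b))$. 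Proposition~5 then produces $\varphi\colon H_{2q}(\overline{\mathscr{B}}^{n}(A))\to H_{2pq+2}(\overline{\mathscr{B}}^{n+1}(A))$ for all $n\ge1$, $q\ge1$.

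\emph{Additivity for $p$ odd.} I would prove it first for $\psi\colon{_pA}\to H_{2}(\overline{\mathscr{B}}(A))$ with $A$ concentrated in degree $0$; the computation transcribes verbatim into $\mathscr{B}(\overline{\mathscr{B}}^{n}(A))$, the only new ingredient being $\bar a^{p}=\bar b^{p}=0$, which is supplied by the previous step. Given $a,b\in{_pA}$, write $\bar a=a-\epsilon(a)$, $\bar b=b-\epsilon(b)$; from $\binom{p}{k}\equiv0\pmod p$ for $0<k<p$ we get $(\bar a+\bar b)^{p}=\bar a^{p}+\bar b^{p}=0$, so $a+b\in{_pA}$. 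Take $x_{a}=[\bar a]$, $x_{b}=[\bar b]$ (so $d x_{a}=\bar a$, $d x_{b}=\bar b$) and use $x_{a}+x_{b}$ as the lift of $\bar a+\bar b$. Expanding $(\bar a+\bar b)^{p-1}(x_{a}+x_{b})$ isolates $\bar a^{p-1}x_{a}+\bar b^{p-1}x_{b}$, with primitives $y_{a},y_{b}$, together with a residual sum of terms $\bar a^{j}\bar b^{p-1-j}x_{a}$ and $\bar a^{j}\bar b^{p-1-j}x_{b}$. Each is handled by the identity $\bar a^{j}\bar b^{p-1-j}[\bar c]=d\bigl([\bar a^{j}\bar b^{p-1-j}\mid\bar c]\bigr)+[\bar a^{j}\bar b^{p-1-j}\bar c]$ in $\mathscr{B}(A)$; after collecting the bracket-free remainders and reindexing, they add up to $\sum_{i=1}^{p-1}\binom{p}{i}[\bar a^{i}\bar b^{p-i}]$, which vanishes mod $p$. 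Hence $y_{a+b}:=y_{a}+y_{b}+W$ is a primitive of $(\bar a+\bar b)^{p-1}(x_{a}+x_{b})$, where $W$ is an explicit sum of length-two brackets, and therefore $\psi(a+b)-\psi(a)-\psi(b)=\{\overline{W}\}$ in $H_{2}(\overline{\mathscr{B}}(A))$. The remaining, essential step is to show $\overline{W}$ bounds in $\overline{\mathscr{B}}(A)$ when $p$ is odd; for $p=2$ one gets instead $W=[\bar a\mid\bar b]+[\bar b\mid\bar a]=[\bar a]\ast[\bar b]$, a generally non-trivial product class, which is exactly why $\psi$ is only quadratic in that case. Transporting the same symbols into $\mathscr{B}(\overline{\mathscr{B}}^{n}(A))$ gives additivity of $\varphi$ for all $n\ge1$, $q\ge1$.

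\emph{Decomposables.} Let $a=uv$ with $u,v$ cycles of positive even degree in $\overline{\mathscr{B}}^{n}(A)$; by additivity the general decomposable reduces to this. Using $d[u\mid v]=u[v]-[uv]$ in $\mathscr{B}(\overline{\mathscr{B}}^{n}(A))$ to write $[uv]=u[v]-d[u\mid v]$, and then $u^{p}=0$, one computes $(uv)^{p-1}[uv]=-d\bigl(u^{p-1}v^{p-1}[u\mid v]\bigr)$, so $y_{uv}=-u^{p-1}v^{p-1}[u\mid v]$ is an admissible primitive. Its leading coefficient $u^{p-1}v^{p-1}$ lies in $\overline{\mathscr{B}}^{n}(A)_{>0}$, so the image $\overline{y_{uv}}$ in $\overline{\mathscr{B}}^{n+1}(A)=R\otimes_{\overline{\mathscr{B}}^{n}(A)}M$ is killed by the augmentation, whence $\varphi(uv)=0$. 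The main obstacle in the whole argument is the step flagged above — proving that the length-two cycle $\overline{W}$ is a boundary in $\overline{\mathscr{B}}(A)$ precisely when $p$ is odd; everything else is either a direct appeal to Cartan's Proposition~5 or a short manipulation with $p$-th powers and the bar differential.
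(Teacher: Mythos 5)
The paper itself states this proposition without proof, as a direct citation of Cartan (Th\'eor\`eme 3, exp.\ 6 of \cite{Ca}), so there is no in-paper argument to compare against; I am judging your proposal on its own terms. Your reduction of well-definedness to Cartan's Proposition 5 is correct: the divided-power structure on $\overline{\mathscr{B}}{}^{n}(A)$ gives $a^{p}=p!\,\gamma_{p}(a)=0$ in positive even degree, and for $p$ odd $b^{p-1}=0$ when $b$ has odd degree. The decomposables computation is also essentially right, though you should note that a decomposable class in even degree may be a product of two odd-degree cycles; the same argument covers that case since $u^{2}=0$ there, but as written you only treat even-degree factors.

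The genuine gap is exactly the step you flag yourself. You correctly reduce additivity to the assertion that the degree-two cycle $\overline{W}=\sum_{j=0}^{p-2}\binom{p-1}{j}[\bar a^{j}\bar b^{p-1-j}\mid\bar a]+\sum_{j=1}^{p-1}\binom{p-1}{j}[\bar a^{j}\bar b^{p-1-j}\mid\bar b]$ bounds in $\overline{\mathscr{B}}(A)$ when $p$ is odd, and then you stop. But that assertion \emph{is} the content of the additivity statement: it is precisely where the hypothesis that $p$ is odd must enter (your own remark about $p=2$ shows the class is nonzero in general there), so leaving it unproved means the proposition is not proved. The claim is true and finishable along your lines: in $R\otimes_{A}\mathscr{B}(A)$ one has $\bar d([u\mid v\mid w])=[u\mid vw]-[uv\mid w]$ for $u,v,w$ in the augmentation ideal, so modulo boundaries $[uv\mid w]\equiv[u\mid vw]$; iterating this relation pushes every summand of $\overline{W}$ into a common normal form such as $[\bar a\mid\bar a^{i-1}\bar b^{p-i}]$, and the total coefficient of each normal-form bracket is a sum of binomial coefficients congruent to $0\bmod p$. (For $p=3$ one finds $\overline{W}\equiv 3[\bar a\mid\bar b^{2}]=0$.) Until that combinatorial verification is carried out in general, and transported to $\mathscr{B}(\overline{\mathscr{B}}{}^{n}(A))$ with the signs attendant on positive-degree generators, what you have is a correct plan with the central lemma missing rather than a proof.
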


\begin{definition}\label{divided power}
 For $A$ graded-commutative, a \emph{divided power operation} on $A$ is a collection of maps $\gamma_{k}:A\rightarrow A$ for all integers $k\geq 0$, such that for any $x,y\in A$ we have the following axioms:
 \begin{enumerate}
 \item
  $\gamma_{0}(a)=1, \gamma_{1}(a)=a, \textrm{deg}\gamma_{k}(a)=k \textrm{deg}a.$
 \item
  $\gamma_{k}(x)\gamma_{l}(x)={k+l \choose k}\gamma_{k+l}(x).$
 \item
  (Leibniz rule) $\gamma_{k}(x+y)=\sum_{i+j=k}\gamma_{i}(x)\gamma_{j}(y).$
 \item
 $$\gamma_{k}(xy)=
    \begin{cases}
    0 ,\textrm{deg}(x), \textrm{deg}(y) \textrm{ are odd}, k\geq 2,\\
    x^{k}\gamma_{k}(y), \textrm{deg}(x), \textrm{deg}(y) \textrm{ are even}, \textrm{deg}(y)\geq 2.
    \end{cases}
 $$
  When the characteristic of $R$ is 2, we have in addition, for $k\geq 2$,
 $$\gamma_{k}(xy)=
    \begin{cases}
    0 ,\textrm{deg}(x), \textrm{deg}(y)>0,\\
    x^{k}\gamma_{k}(y), \textrm{deg}(x)=0.
    \end{cases}
 $$
 \item
  $\gamma_{k}(\gamma_{l}(x))={2l-1 \choose l-1}{3l-1 \choose l-1}\cdots {kl-1 \choose l-1}\gamma_{kl}(x).$
 If in addition, $A$ is a DGA with differential $d$, then we require
 \item
  $d(\gamma_{k}(x))=\gamma_{k-1}(x)d(x)$ for $k\geq 1$.
 \end{enumerate}
 A graded-commutative algebra with a divided power operation is called a \emph{divided power algebra}. A map of divided power algebras is a homomorphism of graded algebras compatible with the divided power operation.
\end{definition}

\begin{remark}
 We do not require $A$ to have a differential, since we often wish to define a divided power operator on the homology of a DGA, rather than the DGA itself.
\end{remark}

\begin{example}\label{divided power ex1}
  The prototype of a divided power algebra is $P_{R}(y)$, which, as a graded $R$-algebra, is generated  by element $\gamma_{k}(y)$ for all $k\geq 1$ modulo the relations imposed by definition \ref{divided power}. Here $y$ is of degree $2q$ for any positive integer $q$. By (4) of definition \ref{divided power}, we have $k!\gamma_{k}(y)=y^k$. In fact, when $R$ is torsion-free, $P_{R}(y)$ is isomorphic to the polynomial algebra $R[y]$ adjoining all $\frac{y^k}{k!}=\gamma_{k}(y).$

  On the other hand, if $R=\mathbb{Z}/p$ where $p$ is a prime number, then (4) of definition \ref{divided power} implies that $y^p=p!\gamma_{p}(y)=0$. Furthermore, for $k=k_{0}+k_{1}p+k_{2}p^2+\cdots +k_{r}p^r$, where $0\leq k_{i}<p-1, i=0,1,\cdots, r$, we have $\gamma_{k}(y)=\prod_{0\leq i<r}\gamma_{k_i}(\gamma_{p^i}(y))=\gamma_{p^i}(y)/k_{i}!$. In fact, as a graded $\mathbb{Z}/p$-algebra

  \begin{equation}\label{divided power fomula}
  P_{\mathbb{Z}/p}(y)\cong \bigotimes_{k\geq 0} \mathbb{Z}/p[\gamma_{p^k}(y)].
  \end{equation}

  A detailed discussion on divided power algebras over $\mathbb{Z}/p$, including the proofs of the statements above, can be found in section 7, exp. 7 of \cite{Ca} (Cartan and Serre, 1954-1955).
\end{example}

\begin{example}\label{divided power ex2}
\
 \begin{enumerate}
 \item
 Let $M$ be a free graded $R$-module generated by elements in odd degrees. Then we can form the free exterior $E(M)$ over a given set of generators and take the trivial divided power operations such that $\gamma_{0}$ is the constant map on to $1\in R$, $\gamma_{1}$ is the identity, and $\gamma_{k}(x)=0$ for $k>1$. This is considered usually when $R$ is a field of charcter other than $2$.
 \item
 Let $M$ be a free graded $R$-module generated by a set $\{e_{i}\}$ of elements. If $R$ has characteristic other than $2$, we require $e_i$ to have even degrees. Then we can form the  the universal symmetric algebra $S(M)$ which has the underlying graded $R$-module $\sum_{i\geq 0}M^{\otimes i}$, with $M^{\otimes 0}=R$ in degree 0, and with a structure of DGA such that for any divided power algebra $A$, an $R$-linear map $M\rightarrow A$ can be extended uniquely to a map of divided power algebra $S(M)\rightarrow A$.

 The product on $S(M)$ is shuffle product similar to that of the reduced bar construction. In particular $e_{i}^{k}=k!e_{i}\otimes\cdots\otimes e_{i}$ with $k$ copies of $e_{i}$'s on the righthand side of the equation.       There is a power operation $\{\gamma_{k}\}_{k}$ on $S(M)$ determined by $\gamma_{k}(e_{i})=e_{i}\otimes\cdots\otimes e_{i}$ with $k$ copies of $e_{i}$ and the axioms in Definition \ref{divided power}.

 \item
 Let $A$ be a DGA and $a\in A$. Then $[a]\in\overline{\mathscr{B}}(A)$, and the shuffle product $[a]^k=k![a|\cdots|a]$ with $k$ copies of $a$ on the righthand side of the equation. We can define a divided power operation on $\overline{\mathscr{B}}(A)$ similar to that of $S(M)$, by requiring $\gamma_{k}([a])=[a|\cdots|a]$ with $k$ copies of $a$ in the bracket. For more details see section 4, exp. 8 of \cite{Ca} (Cartan and Serre, 1954-1955).
 \end{enumerate}
\end{example}

\begin{example}\label{divided power ex3}
 Let $A$ and $A'$ be graded-commutative DGA's with divided power operations. Then (3) and (4) of Definition \ref{divided power} gives a unique way to extend the divided power operations to $A\otimes A'$, a DGA with differential determined by those of $A$ and $A'$ via the Leibniz rule. For details see Theorem 2, exp. 7 of \cite{Ca} (Cartan and Serre, 1954-1955).
\end{example}

Let $M$ be a free graded $R$-module and write $M=M_+\oplus M_-$, where $M_+$ (resp. $M_-$) is the direct sum of $M_k$'s for even (resp. odd) $k$. Let
\begin{equation*}
U(M)=
\begin{cases}
S(M), \textrm{ $R$ has character $2$},\\
S(M_+)\otimes E(M_-), \textrm{ otherwise}.
\end{cases}
\end{equation*}
Then in the sense of Example \ref{divided power ex2} and Example \ref{divided power ex3}, $U(M)$ is a graded-commutative algebra with a divided power operation. We call $U(M)$ the universal algebra of $M$ in the sense of the following
\begin{theorem}[H. Cartan, Theorem 2, exp.8, \cite{Ca}, Cartan and Serre, 1954-1955]\label{U(M)}
 Let $A$ be a graded-commutative algebra with a divided power operation as in Definition \ref{divided power} and $M$ a free graded module. Then any homomorphism of graded $R$-modules $f: M\rightarrow A$ can be extended uniquely to a map of divided power algebras $U(M)\rightarrow A$.
\end{theorem}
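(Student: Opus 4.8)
The plan is to construct $U(M)$ explicitly from a homogeneous basis of $M$ and then verify the universal property by a direct computation on that basis. First I would fix a homogeneous $R$-basis $\{m_\alpha\}_{\alpha\in J}$ of $M$ and split the index set as $J=J_{\mathrm{odd}}\sqcup J_{\mathrm{even}}$ according to the parity of $\deg m_\alpha$; when $\textrm{char }R=2$ we instead put every generator into $J_{\mathrm{even}}$, since there the even/odd dichotomy in Definition \ref{divided power} collapses. Then I would set
$$U(M):=E\bigl(\textstyle\bigoplus_{\alpha\in J_{\mathrm{odd}}}R\cdot m_\alpha\bigr)\otimes S\bigl(\textstyle\bigoplus_{\alpha\in J_{\mathrm{even}}}R\cdot m_\alpha\bigr),$$
where $E(-)$ carries the trivial divided power operation of Example \ref{divided power ex2}(1) and $S(-)$ is the universal symmetric (divided power) algebra of Example \ref{divided power ex2}(2), and the divided power operation on the tensor product is the one produced by Example \ref{divided power ex3}. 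By construction $U(M)$ is a free $R$-module with explicit basis the monomials $m_{\alpha_1}\cdots m_{\alpha_r}\cdot\gamma_{k_1}(m_{\beta_1})\cdots\gamma_{k_s}(m_{\beta_s})$ indexed by finite subsets of $J_{\mathrm{odd}}$ and finitely supported functions $J_{\mathrm{even}}\to\mathbb{Z}_{\geq 1}$, and (by comparing bases) it is independent of the chosen basis up to canonical isomorphism.

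Next I would produce the extension. Given $f\colon M\to A$, define $\tilde f\colon U(M)\to A$ on the monomial basis above by $\tilde f\bigl(m_{\alpha_1}\cdots\gamma_{k_1}(m_{\beta_1})\cdots\bigr)=f(m_{\alpha_1})\cdots\gamma_{k_1}(f(m_{\beta_1}))\cdots$ and extend $R$-linearly. Uniqueness is then immediate: any divided power algebra map $g$ extending $f$ must satisfy $g(m_\alpha)=f(m_\alpha)$ by axiom (1) of Definition \ref{divided power}, hence $g(\gamma_k(m_\alpha))=\gamma_k(f(m_\alpha))$, and $g$ is multiplicative, so $g$ agrees with $\tilde f$ on a spanning set. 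It remains to check that $\tilde f$ is a homomorphism of graded algebras and that $\tilde f\gamma_k=\gamma_k\tilde f$. Multiplicativity reduces, through the basis, to the relations $m_\alpha^2=0$ in $E$ and $\gamma_k(x)\gamma_l(x)={k+l\choose k}\gamma_{k+l}(x)$ in $S$ (axiom (2) of Definition \ref{divided power}), each of which is sent by $\tilde f$ to the corresponding identity in $A$ precisely because $A$ is itself a divided power algebra. Compatibility with $\gamma_k$ is then reduced, using the Leibniz rule axiom (3) to expand $\gamma_k$ of a sum of monomials and axioms (4) and (5) to rewrite $\gamma_k$ of a product and of an iterated $\gamma_l$, to the one-generator case, where it holds by the very definition of $\tilde f$.

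The bookkeeping in this last reduction is where the genuine work lies: the identity $\gamma_k(x+y)=\sum_{i+j=k}\gamma_i(x)\gamma_j(y)$ must be applied repeatedly to rewrite an arbitrary element of $U(M)$ in terms of the $\gamma_\ell(m_\alpha)$, and then axioms (4)--(5) of Definition \ref{divided power} have to be invoked in exactly the combinatorial pattern that also underlies the identities $k!\,\gamma_k(y)=y^k$ and, in characteristic $p$, the decomposition $P_{\mathbb{Z}/p}(y)\cong\bigotimes_{k\geq 0}\mathbb{Z}/p[\gamma_{p^k}(y)]$ of Example \ref{divided power ex1}. Tracking the signs introduced by the odd-degree exterior generators is the subtle point when $\textrm{char }R\neq 2$, whereas in characteristic $2$ one must instead use the extra clauses of axiom (4) of Definition \ref{divided power}. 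Once every such identity in $U(M)$ is matched term by term with the corresponding identity valid in the target divided power algebra $A$, both the well-definedness of $\tilde f$ and its being the unique divided power extension of $f$ follow, completing the proof.
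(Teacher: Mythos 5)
Your construction of $U(M)$ as $E\bigl(\bigoplus_{\alpha\in J_{\mathrm{odd}}}R\cdot m_\alpha\bigr)\otimes S\bigl(\bigoplus_{\alpha\in J_{\mathrm{even}}}R\cdot m_\alpha\bigr)$, with the divided powers supplied by Example \ref{divided power ex2} and glued by Example \ref{divided power ex3}, is precisely the route the paper intends: the theorem is quoted from Cartan and introduced with the words ``Based on Example \ref{divided power ex2} and Example \ref{divided power ex3} we have the following,'' so your extension-on-monomials argument and reduction of the compatibility checks to axioms (2)--(5) of Definition \ref{divided power} is the same (standard) proof, correctly carried out.
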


In the case $p=2$ we have the following relation of the three operations:

\begin{proposition}[Proposition 1, exp.8, \cite{Ca}, Cartan and Serre, 1954-1955]\label{relation}
For $q\geq 1$, and $A$ a graded-commutative DGA, we have
\[
  \varphi_2=\gamma_2\cdot\sigma: H_{2q}(A)\rightarrow H_{4q+2}(\bar{\mathscr{B}}(A)).
\]
\end{proposition}
The three operations, the suspension, the transpotence, and the divided power operation, as described above, are enough to describe the homology of $K(\Pi,n)$ with coefficients in $\mathbb{Z}$ or $\mathbb{Z}/p$, at least when $\Pi$ is a finitely generated abelian group. We start with the definitions of words and their heights and degrees.

\begin{definition}\label{words}
 We give parallel definitions in the cases when $p$ is odd and $p=2$.
 \begin{enumerate}
 \item
 By a \emph{word} we mean a sequence consists of the three symbols $\sigma, \varphi_{p},$ and $\gamma_{p}$ when $p$ is odd, or $\sigma,\gamma_{2}$ when $p=2$, where repetition is allowed. The \emph{height} of a word $\alpha$ is the total number of $\sigma$ and $\varphi_{p}$ in $\alpha$, counting repetition. We take the degree of the empty word to be $0$ and inductively define the degree of the words $\sigma\alpha, \varphi_{p}\alpha$ and $\gamma_{p}\alpha$ as follows:
 \begin{equation}
  \begin{cases}
  \operatorname{deg}(\sigma\alpha)=\operatorname{deg}(\alpha)+2,\\
  \operatorname{deg}(\varphi_{p}\alpha)=2p\operatorname{deg}(\alpha)+2,\\
  \operatorname{deg}(\gamma_{p}\alpha)=p\operatorname{deg}(\alpha).
  \end{cases}
 \end{equation}
 \item
 When $p$ is odd, an \emph{admissible word}, is a word $\alpha$ such that (i) $\alpha$ is non-empty and starts and ends with $\sigma$ or $\varphi_{p}$, and (ii) for every $\varphi_{p}$ and $\gamma_{p}$ appeared in $\alpha$, there are even number of copies of $\sigma$ on its righthand side. An admissible word is of type 1 if it ends with $\sigma$, and type 2 if it ends with $\varphi_{p}$.

 \item
 When $p=2$, an \emph{admissible word}, is a word $\alpha$ that starts and ends with $\sigma$. $\alpha$ is of type 2 if it ends with $\gamma_{2}\sigma$, and of type 1 otherwise.
\end{enumerate}
 The words consisting of $\sigma, \varphi_{p}$ and $\gamma_{p}$ are also called $p$-words.
\end{definition}

A word can be regarded as the compose of the sequence of operations $\sigma, \varphi_{p}$ and $\gamma_{p}$, in the obvious manner. Let $\mathbb{Z}[\Pi]$ be the group ring of a finitely generated abelian group $\Pi$, viewed as a DGA concentrated in degree $0$ and with a trivial differential. Then $H_{*}(\mathbb{Z}[\Pi])\cong\mathbb{Z}[\Pi]$. The alert reader will find that, an admissible word $\alpha$ is a well defined compose of operations on $\mathbb{Z}[\Pi]$, with image in $H_{\operatorname{deg}(\alpha)}(\overline{\mathscr{B}^{n}}(\mathbb{Z}[\Pi]))\cong H_{\operatorname{deg}(\alpha)}(K(\Pi,n))$, where $n$ is the height of $\alpha$. In fact all homology classes are generated this way, as we will see soon.

\begin{definition}\label{U(M)p}
 Let $\Pi$ be a finitely generated abelian group, and let $_{p}\Pi$ be the subgroup of $\Pi$ of elements of order infinity or a power of $p$. We write $\Pi/(p\Pi)=\prod_{i}\Pi'_{i}$ and $_{p}\Pi=\prod_{j}\Pi''_{j}$ as the decomposition of $\Pi/(p\Pi)$ and $_{p}\Pi$ into direct products of cyclic groups of order infinity or a power of $p$.

 Fix a positive integer $n$. Let $M^{(n)}$ be the free graded $R$-module generated by $\alpha_{i}$ of degree deg($\alpha$) for every admissible word $\alpha$ of type 1 with height $n$ and $\Pi'_{i}$, and $\alpha'_{j}$ of degree deg($\alpha'$) for every admissible word $\alpha'$ with height $n$ of type 2. Let $U(M^{(n)})$ be as in Proposition \ref{U(M)}. Notice in particular that $U(M^{(0)})\cong\mathbb{Z}/p[\Pi]$.
\end{definition}

In the following theorem we do not distinguish a word and the compose of operations that it represents.

\begin{theorem}[H. Cartan, Th{\'e}or{\`e}me fondamental, exp. 9, \cite{Ca}, Cartan and Serre, 1954-1955]\label{fundamental}
 With the notations as above, let $w'_{i},w''_{j}$ be generators of $\Pi'_{i},\Pi''_{j}$ respectively. Take $R=\mathbb{Z}/p$, where $p$ is an odd prime number. Let $f^{(n)}:M^{(n)}\rightarrow H_{*}(K(\Pi,n);\mathbb{Z}/p)$ be the homomorphism of $\mathbb{Z}/p$-modules taking $\alpha_{i}$ (resp. $\alpha'_{j}$) to $\alpha(w'_{i})$ (resp. $\alpha(w''_{j})$). Its unique extension to $U(M^{(n)})$, $\tilde{f}^{(n)}$ given by Proposition \ref{U(M)}, is an isomorphism of divided power algebras.
\end{theorem}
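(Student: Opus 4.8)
The plan is to induct on the height $n$. The base case $n=0$ is the trivial observation that $U(M^{(0)})\cong\mathbb{Z}/p[\Pi]=H_{0}(K(\Pi,0);\mathbb{Z}/p)$, since $K(\Pi,0)=\Pi$ is discrete. So assume the theorem in height $n-1$, i.e.\ that $f^{(n-1)}$ extends to an isomorphism of divided power algebras $U(M^{(n-1)})\cong H_{*}(\overline{\mathscr{B}}^{n-1}(\mathbb{Z}/p[\Pi]))$. The iterated bar construction of Example \ref{bar} produces an acyclic multiplicative construction
\[
\big(\overline{\mathscr{B}}^{n-1}(\mathbb{Z}/p[\Pi]),\ \overline{\mathscr{B}}^{n}(\mathbb{Z}/p[\Pi]),\ \mathscr{B}^{n}(\mathbb{Z}/p[\Pi])\big),
\]
acyclic because the degree-one map $s$ of Example \ref{bar} is a contracting homotopy for $\mathscr{B}^{n}$, and the filtration $\mathbf{F}_{B}$ equips it with a first-quadrant spectral sequence converging to $H_{*}(\overline{\mathscr{B}}^{n}(\mathbb{Z}/p[\Pi]))\cong H_{*}(K(\Pi,n);\mathbb{Z}/p)$ whose input is governed by $H_{*}(\overline{\mathscr{B}}^{n-1}(\mathbb{Z}/p[\Pi]))=U(M^{(n-1)})$. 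The task is to identify this spectral sequence with one built from a purely algebraic model assembled out of the elementary acyclic DGA's of Lemma \ref{eledga}, in the style in which $M(3)$ was assembled in Section 2, and to read off from it both the graded module underlying $H_{*}(\overline{\mathscr{B}}^{n}(\mathbb{Z}/p[\Pi]))$ and the effect of $\tilde{f}^{(n)}$.

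First I would reduce to the case of $\Pi$ a single cyclic group: this is legitimate because the K\"unneth theorem over $\mathbb{Z}/p$ is compatible with both the iterated bar construction and the functor $U(-)$, and the distinction between the families $\Pi'_{i}$ and $\Pi''_{j}$ of Definition \ref{U(M)p} merely records which cyclic summands are visible in $\Pi/p\Pi$ and which in $_{p}\Pi$. Next, decompose $U(M^{(n-1)})$, as a divided power algebra over $\mathbb{Z}/p$, into a tensor product of elementary factors: exterior algebras $E(z)$ on its odd-degree generators and truncated polynomial algebras $\mathbb{Z}/p[c;2q]/c^{p}$ on the $c=\gamma_{p^{k}}(y)$, using the splitting of Example \ref{divided power ex1}. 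Since the bar construction carries a tensor product of DGA's to one quasi-isomorphic to the tensor product of the bar constructions (an Eilenberg--Zilber comparison, compatible with divided powers), it suffices to compute $\overline{\mathscr{B}}$ on each elementary factor. For an odd generator one checks directly that $H_{*}(\overline{\mathscr{B}}(E(z;2q+1)))$ is the divided power algebra on the suspension $\sigma z$ in degree $2q+2$; for an even generator, the acyclicity of the DGA in Lemma \ref{eledga} — with $c$ the generator, $b=\sigma c$ its suspension and $a=\varphi_{p}c$ its transpotence — is exactly the statement that $H_{*}(\overline{\mathscr{B}}(\mathbb{Z}/p[c;2q]/c^{p}))\cong E(\sigma c;2q+1)\otimes P(\varphi_{p}c;2pq+2)$. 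Splicing these back together, the new generators are precisely $\sigma$, $\varphi_{p}$ and, after closing up under divided powers, $\gamma_{p}$ applied to the old ones, and the constraint that $\varphi_{p}$ and $\gamma_{p}$ act only on even-degree classes is literally the admissibility condition of Definition \ref{words}. This identifies the associated graded of $H_{*}(\overline{\mathscr{B}}^{n}(\mathbb{Z}/p[\Pi]))$ with that of $U(M^{(n)})$ and shows $f^{(n)}$ carries a $\mathbb{Z}/p$-basis to a $\mathbb{Z}/p$-basis.

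It then remains to upgrade this to the statement that $\tilde{f}^{(n)}$ is an isomorphism of divided power algebras. That $\tilde{f}^{(n)}$ is a morphism of divided power algebras is automatic, being by definition the unique divided-power extension of $f^{(n)}$ provided by Theorem \ref{U(M)}. Surjectivity amounts to showing $H_{*}(\overline{\mathscr{B}}^{n}(\mathbb{Z}/p[\Pi]))$ is generated, as a divided power algebra, by the classes named by admissible words; here one invokes the additivity of the transpotence modulo decomposables (the propositions of exp.~6--7 of \cite{Ca} quoted above) to be sure the $\varphi_{p}$-classes are genuinely new polynomial generators rather than decomposable, and the explicit contracting homotopies of Lemma \ref{eledga} to pin down representatives. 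Injectivity — the absence of hidden relations — then follows by comparing Poincar\'e series: the degeneration pattern of the spectral sequence is forced term by term by the elementary acyclic pieces, so $\dim_{\mathbb{Z}/p}H_{k}(\overline{\mathscr{B}}^{n}(\mathbb{Z}/p[\Pi]))$ equals $\dim_{\mathbb{Z}/p}U(M^{(n)})_{k}$ in every degree.

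The main obstacle is the multiplicative bookkeeping of the inductive step. Three points require real care: that the bar construction genuinely commutes, up to quasi-isomorphism of DGA's and compatibly with divided powers, with tensor products of divided power algebras; that the \emph{a priori} non-additive transpotence $\varphi_{p}$ is controlled well enough to see both that its image consists of polynomial generators and how it meshes with the divided power operation; and that no differentials or extension problems appear beyond those already present in the elementary pieces of Lemma \ref{eledga}. These are exactly the matters treated across exp.~6--9 of \cite{Ca}; the point of this appendix is to organize them around acyclic multiplicative constructions and the three operations $\sigma$, $\varphi_{p}$, $\gamma_{p}$, in the same spirit as the construction of $M(3)$ in Section 2.
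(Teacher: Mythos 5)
Your proposal follows the same overall strategy as the paper's sketch: induct on the height $n$, reduce to cyclic $\Pi$ by the K\"unneth formula, decompose the height-$n$ algebra into elementary tensor factors (exterior algebras on odd generators, truncated polynomial algebras $\mathbb{Z}/p[\gamma_{p^k}(y)]/(\cdot)^p$ on even ones), and recognize each factor's contribution to the next bar construction via the elementary acyclic DGA's. Where you diverge is in the mechanism used to close the induction. The paper builds one explicit acyclic multiplicative construction $(U(M^{(n)}), U(M^{(n+1)}), L)$ with $L=U(M^{(n)})\otimes U(M^{(n+1)})$ and then invokes the comparison theorems for acyclic constructions (Theorems \ref{Ca1} and \ref{Ca2}): a weak equivalence $U(M^{(n)})\to\overline{\mathscr{B}}^{n}(\mathbb{Z}/p[\Pi])$ on the base of two acyclic constructions induces a weak equivalence $U(M^{(n+1)})\to\overline{\mathscr{B}}^{n+1}(\mathbb{Z}/p[\Pi])$ on the quotients, in one stroke and with no spectral sequence in sight. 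You instead run the filtration spectral sequence of the bar construction, identify its $E_2$-term factor by factor, and then argue surjectivity and injectivity of $\tilde{f}^{(n)}$ separately, the latter by a Poincar\'e series count. That route can be made to work, but the two claims you flag yourself --- that the degeneration pattern is ``forced term by term'' and that no extension problems arise beyond the elementary pieces --- are precisely the content that the acyclic-construction comparison is designed to deliver for free; as written they are asserted rather than proved, and proving them directly would essentially reconstruct Theorem \ref{Ca1}. So the paper's route buys a cleaner conclusion (no associated-graded or multiplicative extension issues to resolve), while yours makes the computation of each elementary bar construction more visible but leaves the convergence bookkeeping as the genuine remaining work.
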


We will give a sketch of proof of Theorem \ref{fundamental} since the idea is relevant to our application. To do so we need the following theorems.

\begin{theorem}[H. Cartan, Theorem 2, exp.2, \cite{Ca}, Cartan and Serre, 1954-1955]\label{Ca1}
  Let $f: A \rightarrow A'$ be a morphism of DGA's over $R$. Let $M$ (resp. $M'$) be an acyclic chain complex over $R$ with a graded $A$ (resp.$A'$)-module structure. Let $I$ (resp. $I'$) be the kernel of the augmentation of $A$ (resp. $A'$). Then there is a morphism of chain complexes $g: M/IM \rightarrow M'/I'M'$ compatible with $f$ in the obvious sense. The induced morphism $H_{*}(g)$ is independent of the choice of $g$. Moreover, if $f$ is a weak equivalence, then so is $g$.
\end{theorem}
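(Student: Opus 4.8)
The plan is to lift $g$ to an $f$-semilinear chain map $\tilde g\colon M\to M'$, that is, one satisfying $\tilde g(am)=f(a)\tilde g(m)$ for $a\in A$, $m\in M$, together with $\epsilon_{M'}\circ\tilde g=\epsilon_M$, where $\epsilon_M\colon M\to R$ and $\epsilon_{M'}\colon M'\to R$ are the augmentations, which are quasi-isomorphisms because $M$ and $M'$ are acyclic. Since $f(I)\subseteq I'$, such a $\tilde g$ carries $IM$ into $I'M'$ and so descends to the required $g\colon M/IM\to M'/I'M'$. To construct $\tilde g$ I would use that $M$, being the middle term of a construction, is a free graded $A$-module on a homogeneous basis $\{e_\lambda\}$; ordering the $e_\lambda$ by degree, one defines $\tilde g(e_\lambda)$ by induction on $\deg e_\lambda$. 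At each stage $\tilde g(d_M e_\lambda)$ is already defined and is a cycle of $M'$, so one may choose $\tilde g(e_\lambda)$ with $d_{M'}\tilde g(e_\lambda)=\tilde g(d_M e_\lambda)$: for $\deg e_\lambda\geq 2$ this is immediate from $H_{>0}(M')=0$, and in degrees $0$ and $1$ it is forced once one also imposes $\epsilon_{M'}\tilde g(e_\lambda)=\epsilon_M(e_\lambda)$, using that $\epsilon_{M'}$ is surjective and that $\ker(\epsilon_{M'}|_{M'_0})$ is the image of $d_{M'}$.

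Next I would check that any two $f$-semilinear chain maps $\tilde g,\tilde g'$ of this kind are $f$-semilinearly chain homotopic, by the same degreewise lifting. An $f$-semilinear homotopy again carries $IM$ into $I'M'$, hence descends to a chain homotopy between the maps it induces on $M/IM\to M'/I'M'$; this shows that $H_*(g)$ does not depend on any of the choices made.

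For the last assertion I would first dispose of the case $A=A'$, $f=\mathrm{id}$ — more precisely, that any two acyclic DG $A$-modules which are free over $A$ have homotopy equivalent quotients by their augmentation ideals. Indeed, the construction above, applied both ways, yields $A$-linear chain maps between the two modules over the identity; composing them and using the uniqueness just proved, both composites are $A$-linearly homotopic to the respective identities, so the two modules are homotopy equivalent as DG $A$-modules. Applying the additive functor $R\otimes_A(-)$ then exhibits $g$ as a homotopy equivalence, hence a quasi-isomorphism. For a general weak equivalence $f$ I would reduce to this case by base change: $M\otimes_A A'$ is again free over $A'$; it is acyclic because $M$ is bounded below and degreewise free over $A$, hence flat enough that $M=M\otimes_A A\to M\otimes_A A'$ is a quasi-isomorphism ($f$ being one); and there is a canonical identification $R\otimes_{A'}(M\otimes_A A')\cong R\otimes_A M=M/IM$ of chain complexes. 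Applying the previous paragraph over $A'$ to $M\otimes_A A'$ and $M'$ produces a quasi-isomorphism $M/IM\to M'/I'M'$, and because it is induced by the $f$-semilinear composite $M\to M\otimes_A A'\to M'$, it computes $H_*(g)$ by the uniqueness step; so $g$ is a quasi-isomorphism.

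I expect the base-change step to be the main obstacle: one must verify that $M\otimes_A A'$ stays acyclic, which is precisely where it matters that $M$ is not just an acyclic $A$-module but a bounded-below, degreewise free construction, and one must check that the identification $R\otimes_{A'}(M\otimes_A A')\cong M/IM$ respects the differentials. Everything else is the standard comparison-of-resolutions bookkeeping.
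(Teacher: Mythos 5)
The paper does not actually prove this statement: it is quoted from the Cartan seminar (Theorem 2 of exp.\ 2 of \cite{Ca}) as background for the appendix, so there is no in-paper argument to measure yours against. Your proposal is the classical comparison-of-constructions argument and is correct as a sketch. Two points are worth stressing. First, as transcribed in the paper the hypothesis that $M$ be \emph{free} as a graded $A$-module is suppressed (the statement is false for a general acyclic DG $A$-module); you correctly reinstate it from the definition of a construction, $M=A\otimes_R N$, and with it the inductive lifting of $\tilde g$, the $f$-semilinear homotopy between any two lifts, and the descent to $M/IM$ all go through exactly as you describe, including the degree $0$ and $1$ base cases via the augmentation. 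Second, you are right that the base-change step is the only genuinely delicate point, and it does work for the reason you indicate: the filtration $F_pM=A\otimes_R N_{\le p}$ by the $N$-degree is automatically preserved by $d_M$ (since $d(1\otimes N_p)$ has $N$-components of degree $<p$), its associated graded is $(A\otimes_R N_p,\ d_A\otimes 1)$, it is finite in each total degree because $A$ and $N$ are non-negatively graded, and the map of the resulting spectral sequences for $M\to M\otimes_A A'$ is an isomorphism on $E^1$ when $f$ is a quasi-isomorphism; the identification $R\otimes_{A'}(M\otimes_A A')\cong R\otimes_A M$ as chain complexes is then a routine check. This base-change route has the advantage of avoiding any Zeeman-type comparison theorem over $\mathbb{Z}$, where flatness issues would otherwise intrude.
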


\begin{theorem}[H. Cartan, Theorem 5, exp.4, \cite{Ca}, Cartan and Serre, 1954-1955]\label{Ca2}
  Let $(A,N,M)$ and $(A',N',M')$ be two multiplicative constructions. Let $\tilde{N'}$ be an $R$-subalgebra of $M$ containing $N$ such that $d: \tilde{N'}_{k+1} \rightarrow \operatorname{Ker}(d_k)$ is a degree-wise isomorphism of $R$-modules for all $k\geq 0$. Here $d_0=\epsilon$. In particular, $M'$ is acyclic. Let $f: A \rightarrow A'$ be a map of DGA's. Then there is a unique map $g: M \rightarrow M'$ of DGA's restricting to
 $f$, such that $g(N)\subset \tilde{N'}$.
\end{theorem}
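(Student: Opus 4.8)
The plan is to build $g$ by induction on total degree, using two facts. First, $M=A\otimes_R N$ is generated as a DGA by its sub-DGA's $A$ (embedded by $a\mapsto a\otimes 1$) and $N$ (by $n\mapsto 1\otimes n$), and since $(a\otimes 1)(1\otimes n)=a\otimes n$, any DGA map $g\colon M\to M'$ restricting to $f$ on $A$ must satisfy $g(a\otimes n)=f(a)\,g(n)$; so $g$ is completely determined by its restriction to $N$. Conversely, an $R$-algebra map $h\colon N\to\tilde{N'}$ satisfying $d_{M'}\circ h=(g\circ d_M)|_N$ — where the right side is evaluated by first applying the (twisted!) differential of $M$ and then the $g$ already constructed in lower degrees — extends $R$-bilinearly by $g(a\otimes n)=f(a)\,h(n)$ to a DGA map, the Leibniz rule and multiplicativity on $M$ following from those of $f$ and $h$ plus a bookkeeping of Koszul signs. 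So it suffices to construct such an $h$ and prove uniqueness. Second, the hypothesis that $d\colon\tilde{N'}_{k+1}\to\ker(d_k)$ is a degreewise isomorphism both forces the value of $g$ on $N$ and supplies it.

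\textbf{Inductive construction.} In degree $0$ one has $M_0=A_0$ and $N_0=R$, so $g$ is forced to equal $f$. Suppose $g$ has been defined on $M_{<k}$ and shown there to be multiplicative and compatible with differentials. Let $n\in N_k$. Then $d_M n\in M_{k-1}$ is a cycle, so $g(d_M n)$ is a cycle in $M'$ of degree $k-1$, i.e. $g(d_M n)\in\ker(d_{k-1})$; in the edge case $k=1$ this uses that $d_M n$ lies in the augmentation ideal of $M$ (the augmentation is a chain map) and that $f$ preserves augmentations, so $g(d_M n)\in\ker(d_0)=\ker\eta$. By hypothesis $d\colon\tilde{N'}_k\xrightarrow{\ \cong\ }\ker(d_{k-1})$, so there is a unique $\bar n\in\tilde{N'}_k$ with $d_{M'}\bar n=g(d_M n)$; set $g(n)=\bar n$. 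Uniqueness of the preimage makes $g$ $R$-linear on $N_k$, and $d_{M'}g(n)=g(d_M n)$ holds by construction.

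\textbf{Multiplicativity and uniqueness.} For $n,n'\in N$ with $|n|+|n'|=k$, apply $d_{M'}$ to both $g(nn')$ and $g(n)g(n')$. The first equals $g(d_M(nn'))$ by construction; expanding $d_M(nn')=(d_M n)\,n'\pm n\,(d_M n')$ inside $M$ and invoking the inductive multiplicativity in degrees $<k$ rewrites this as $g(d_M n)\,g(n')\pm g(n)\,g(d_M n')=d_{M'}\!\big(g(n)g(n')\big)$. Since $\tilde{N'}$ is a subalgebra, both $g(nn')$ and $g(n)g(n')$ lie in $\tilde{N'}_k$, where $d$ is injective, so they agree; extending by $g(a\otimes n)=f(a)g(n)$ then yields multiplicativity and the chain-map property on all of $M_k$, again pinned down by injectivity of $d$ on $\tilde{N'}$. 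Uniqueness runs the identical induction: two solutions $g,g'$ agree on $A$ and, by induction, on $M_{<k}$, so $g(n)-g'(n)\in\tilde{N'}_k$ is killed by $d_{M'}$ and hence vanishes.

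\textbf{Where the difficulty sits.} Existence of the preimages $\bar n$ is simply the isomorphism hypothesis, so the substance is verifying that this element-wise recipe on $N$ assembles into a genuine \emph{ring} homomorphism of $M$. Every identity one needs holds after applying $d_{M'}$, by the inductive hypothesis; upgrading "same differential" to "equal" is legitimate only because both sides lie in a graded piece of $\tilde{N'}$, on which $d$ is injective — this is exactly why $\tilde{N'}$ must be a \emph{subalgebra}, not merely a graded submodule. The one remaining fussy point is transporting the Leibniz rule and multiplicativity across the identification $M\cong A\otimes_R N$ with the correct Koszul signs, the differential of $M$ being twisted rather than the tensor-product differential.
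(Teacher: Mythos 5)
The paper does not prove this statement --- it is quoted from exp.~4 of the Cartan seminar and used as a black box in the appendix --- so there is no in-paper argument to compare against. Your proof is correct and is essentially Cartan's own comparison argument: define $g$ on $N$ degree by degree as $(d|_{\tilde{N'}})^{-1}\circ g\circ d_M$, extend by $g(a\otimes n)=f(a)g(n)$, and upgrade identities that hold after applying $d_{M'}$ to genuine identities via injectivity of $d$ on the subalgebra $\tilde{N'}$; you also correctly (and silently) repair the typo in the statement, reading $\tilde{N'}$ as a subalgebra of $M'$ containing $N'$. The only loose ends are harmless: the base case tacitly assumes $N_0=R$ (true for all constructions used in the paper), and the chain-map property on all of $M_k$ follows directly from the Leibniz rule and lower-degree multiplicativity rather than from injectivity of $d$ on $\tilde{N'}$, since $f(a)g(n)$ need not lie in $\tilde{N'}$.
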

\begin{proof}[Sketch of proof of Theorem \ref{fundamental}]
 By K{\"u}nneth formula it suffice to consider the case that $\Pi$ is a cyclic group with a generator $w$. We proceed to show that there is a multiplicative construction
 \begin{equation}\label{UMn}
 (U(M^{(n)}),U(M^{(n+1)}), L)
 \end{equation}
  with $L$ acyclic.

One can easily show that an admissible word $\alpha$ of height $n+1$ is of the form
 \begin{enumerate}
 \item
  $\sigma\alpha'$ where $\alpha'$ is of height $n$ and odd degree, or
 \item
  $\sigma\gamma_{p^k}\alpha'$ or $\varphi_{p}\gamma_{p^k}\alpha'$ where $k\geq 0$ and $\alpha'$ is of height $n$ and even degree.
 \end{enumerate}
 The base case where $n=0$ is easy. Let $L=U(M^{(n)})\otimes_{\mathbb{Z}/p}U(M^{(n+1)})$ as a graded $\mathbb{Z}/p$-algebra, and consider $U(M^{(n)})$ and $U(M^{(n+1)})$ as its subalgebras in the obvious manner.

 In the first case as above, let $x=\alpha'(w)\in U(M^{(n)})$. Then the free exterior algebra $E_{\mathbb{Z}/p}(x)$ is a subalgebra of $U(M^{(n)}$. Let $y=\alpha(w)=\sigma\alpha'(w)\in U(M^{(n+1)})$, and we have the subalgebra $P_{\mathbb{Z}/p}(y)$ of $U(M^{(n+1)})$. Define the differentials of $x$ and $y$ in $L$ by $d_{L}(x)=0$ and $d_{L}(y)=x$ together with the axioms in Definition \ref{divided power}. Then the $E_{\mathbb{Z}/p}(x)\otimes P_{\mathbb{Z}/p}(y)$ is acyclic.

 In the second case, let $x=\alpha'(w)\in U(M^{(n)}), y_{k}=\sigma\gamma_{p^k}(w)$ and $z_{k}=\phi_{p}\gamma_{p^k}(w)$ for all $k\geq 0$. Define their differentials in $L$ by $d_{L}(x)=0, d_{L}(y_k)=x$ and $d_{L}(z_{k})=x^{p-1}y_{k}$. Then one can show that
 $$P_{\mathbb{Z}/p}(x)\otimes\bigotimes_{k\geq 0}E_{\mathbb{Z}/p}(y_k)\otimes\bigotimes_{k\geq 0}P_{\mathbb{Z}/p}(z_k)$$
 is acyclic.

 By Theorem \ref{Ca1} and Theorem \ref{Ca2} one can inductively prove the statement, the base case where $n=1$ being standard homological algebra.
\end{proof}

\begin{remark}
 This argument fails for $p=2$, in which case $\varphi$ is not additive.
\end{remark}

For a free graded $R$-module $M$, recall the graded $R$-algebra $S(M)$ introduced in Example \ref{divided power ex2}, (2). We take $M^{(n)}$ as in Definition \ref{U(M)p}. and $f^{(n)}$ as in Theorem \ref{fundamental}. Notice that the constructions apply to $p=2$. The analog of Theorem \ref{fundamental} in the case where $p=2$ is the following:

\begin{theorem}[H. Cartan, Th{\'e}or{\`e}me fondamental, exp. 9, \cite{Ca}, Cartan and Serre, 1954-1955]\label{fundamental2}
 $f^{(n)}:M^{(n)}\rightarrow H_{*}(K(\Pi,n);\mathbb{Z}/2)$ extends to $\tilde{f}^{(n)}:S(M^{(n)})\rightarrow H_{*}(K(\Pi,n);\mathbb{Z}/2)$ which is an isomorphism of divided power algebras.
\end{theorem}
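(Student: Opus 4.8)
The plan is to follow the sketch of proof of Theorem~\ref{fundamental} almost verbatim, substituting the free divided power algebra functor $S(-)$ of Example~\ref{divided power ex2}(2) for the universal divided power algebra functor $U(-)$ of Theorem~\ref{U(M)}, and using the $p=2$ admissible words of Definition~\ref{words}(3), which involve only the two symbols $\sigma$ and $\gamma_{2}$. The essential observation is that the obstruction recorded in the Remark after the sketch of Theorem~\ref{fundamental} — that $\varphi$ fails to be additive at $p=2$ — never intervenes here, because $\varphi_{2}$ does not occur in a $2$-word at all: the role played by the transpotence $\varphi_{p}$ at odd primes is taken over by the divided square $\gamma_{2}$, whose behaviour under addition and multiplication is part of the very definition of a divided power algebra (axioms (3) and (4) of Definition~\ref{divided power}). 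Consequently $S(M^{(n)})$, rather than $U(M^{(n)})$, is the correct universal object, and no supplementary relations are needed; moreover the comparison maps below, being the unique divided-power extensions of module maps, automatically preserve the divided power structure, so the final isomorphism is one of divided power algebras.

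First I would reduce, by the K\"unneth formula over the field $\mathbb{Z}/2$, to the case that $\Pi$ is cyclic of order either infinite or a power of $2$ (a cyclic summand of odd order contributes trivially to mod $2$ homology and is absorbed into the trivial part of $M^{(n)}$). I would then argue by induction on $n$, with the cases $n=0$ (where $S(M^{(0)})\cong\mathbb{Z}/2[\Pi]$, in the evident $p=2$ analogue of Definition~\ref{U(M)p}) and $n=1$ (the standard computation of $H_{*}(K(\Pi,1);\mathbb{Z}/2)$) being classical. For the inductive step I would classify the admissible $2$-words of height $n+1$: each is of the form $\sigma\alpha'$ with $\alpha'$ an admissible word of height $n$, and these split according to the parity of $\deg\alpha'$ into the words with $\alpha'$ of odd degree and the words $\sigma\gamma_{2^{k}}\alpha'$, $k\ge 0$, with $\alpha'$ of even degree. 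Taking $L=S(M^{(n)})\otimes_{\mathbb{Z}/2}S(M^{(n+1)})$ as a graded divided power algebra (Example~\ref{divided power ex3}), I would equip it with the differential determined, via the Leibniz rule, by $d(x)=0$ for $x=\alpha'(w)\in S(M^{(n)})$, $d(\sigma\beta(w))=\beta(w)$, together with the induced action on the $\gamma_{2^{k}}$'s. The sub-DGA on the family of generators attached to a fixed $\alpha'$ is then the $p=2$ specialization of an elementary acyclic DGA of the type in Lemma~\ref{eledga}: when $\alpha'$ has odd degree it is simply $E_{2}(x)\otimes P_{2}(\sigma x)$ with $d(\sigma x)=x$, acyclic via the contraction $x\mapsto\sigma x$, $x\gamma_{k}(\sigma x)\mapsto\gamma_{k+1}(\sigma x)$, and when $\alpha'$ has even degree it is the analogous tower built on $P_{2}(x)$ and the generators $\sigma\gamma_{2^{k}}\alpha'(w)$. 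Hence $L$ is acyclic, being a tensor product of acyclic DGA's, so $(S(M^{(n)}),S(M^{(n+1)}),L)$ is an acyclic multiplicative construction. Finally, Theorems~\ref{Ca1} and~\ref{Ca2}, applied to the comparison map with the iterated bar construction $(\overline{\mathscr{B}}^{n}(\mathbb{Z}/2[\Pi]),\overline{\mathscr{B}}^{n+1}(\mathbb{Z}/2[\Pi]),\mathscr{B}^{n+1}(\mathbb{Z}/2[\Pi]))$, and \cite{Ei} to identify $H_{*}(\overline{\mathscr{B}}^{m}(\mathbb{Z}/2[\Pi]))$ with $H_{*}(K(\Pi,m);\mathbb{Z}/2)$, promote the inductive hypothesis that $\tilde f^{(n)}$ is an isomorphism to the conclusion that $\tilde f^{(n+1)}$ is one.

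The main obstacle I anticipate is bookkeeping rather than conceptual: one must compute $d(\gamma_{k}(\sigma\gamma_{2^{j}}\alpha'(w)))$ in each even-degree case and check that the resulting complex genuinely contracts, and one must keep careful track of which $\gamma_{2}$-decomposables appearing in $S(M^{(n+1)})$ are already accounted for by $S(M^{(n)})$ and which are new, in order to verify that $S(M^{(n+1)})=\mathbb{Z}/2\otimes_{S(M^{(n)})}L$ exactly. Once the combinatorics of admissible $2$-words and the attendant degree bookkeeping are organized correctly, no homotopy-theoretic input beyond Theorems~\ref{Ca1} and~\ref{Ca2} and \cite{Ei} is needed, exactly as in the odd-primary case.
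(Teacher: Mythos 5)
Your proposal is correct and follows essentially the same route as the paper, which disposes of this theorem with the single remark that the proof is similar to that of Theorem~\ref{fundamental}: you carry out the same induction via acyclic multiplicative constructions $(S(M^{(n)}),S(M^{(n+1)}),L)$, Theorems~\ref{Ca1} and~\ref{Ca2}, and the identification of $H_{*}(\overline{\mathscr{B}}^{n}(\mathbb{Z}/2[\Pi]))$ with $H_{*}(K(\Pi,n);\mathbb{Z}/2)$. Your added observation --- that the non-additivity of $\varphi$ at $p=2$ is irrelevant because admissible $2$-words contain no $\varphi_{2}$ and the divided square $\gamma_{2}$ is governed by the axioms of Definition~\ref{divided power}, which is exactly why $S(M^{(n)})$ replaces $U(M^{(n)})$ --- is the point the paper leaves implicit, and it is the right one.
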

The proof is similar to that of Theorem \ref{fundamental}.

For the following corollary, recall the notations from Theorem \ref{Brown}. In particular, we may regard the based singular chain complex $S_*(K(\Pi,n);R)$ as a DGA over $R$. Also recall the multiplicative construction (\ref{UMn}) and let $\mathbf{F}_U$ be the increasing filtration on $L$ given by
\[\mathbf{F}_U^t(L)=\sum_{s\geq0,k\leq t}U(M^{(n)})_s\otimes_{\mathbb{Z}/p}U(M^{(n+1)})_k.\]
\begin{corollary}\label{equivofFiltration}
For $n\geq 1$ and each prime $p$, the filtration $\mathbf{F}_U$ induces a spectral sequence isomorphic to the Serre spectral sequence associated to the path fiberation
\begin{equation*}
K(\Pi,n)\rightarrow PK(\Pi,n+1)\rightarrow K(\Pi, n+1).
\end{equation*}
\end{corollary}
\begin{proof}
In Section 10 of \cite{Br1} (Brown, 1959), Brown defined a map of DG modules
\[\varphi^B: S^{(1)}_*(Y)\rightarrow \bar{\mathscr{B}}S_*(\Omega Y)\]
for a paracompact space $Y$, which is also a week equivalence. The curious readers may easily check that when $Y=\Omega X$ for some paracompact space $X$, $\varphi^B$ is a map of DGAs. Therefore, for $Y=K(\Pi,n)$, we have a multiplicative construction $(\bar{\mathscr{B}}S_*(K(\Pi,n-1)), \bar{\mathscr{B}}S_*(K(\Pi,n-1))_\Phi\otimes \bar{\mathscr{B}}S^{(1)}_*(K(\Pi,n+1)),\bar{\mathscr{B}}S^{(1)}_*(K(\Pi,n+1)))$ where the middle term is a twisted tensor product, quasi-isomorphic to $S^F_*(PK(\Pi,n+1))$. Here $F=K(\Pi,n)$.

It follows from Corollary 31.3 of \cite{Ei1} (Eilenberg, 1944) that $S^{(1)}_*(K(\Pi,n+1))$ is chain equivalent to $S_*(K(\Pi,n+1))$. Similarly one can show that $S^F(PK(\Pi,n+1))$ is chain equivalent to $S_*(PK(\Pi,n+1))$. In particular, this means that the twisted tensor product $\bar{\mathscr{B}}S_*(K(\Pi,n-1))_\Phi\otimes \bar{\mathscr{B}}S^{(1)}_*(K(\Pi,n+1))$ is acyclic.

Therefore, it follows from Proposition \ref{Brown} that the bi-degree on the twisted tensor product induces the Serre spectral sequence of the path fibration in the corollary. It follows from Theorem \ref{Ca2} that it suffices to construct a map of DGA's with divided power operations
\[U(M^{(n)})\rightarrow \bar{\mathscr{B}}S^{(1)}_*(\Omega K(\Pi,n+1))\]
which is a weak equivalence. It follows from Theorem \ref{fundamental} and Theoerm \ref{fundamental2} that such a chain equivalence exists, and in particular, sends each $p$-words $\alpha$ to a cycle that represents the same homology class as $\alpha$. Therefore we have a map of graded modules
\[M^{(n)}\rightarrow \bar{\mathscr{B}}S^{(1)}_*(\Omega K(\Pi,n+1)).\]
By Proposition \ref{U(M)} this extends to the desired DGA map.
\end{proof}
We proceed to consider the integral homology of $K(\Pi,n)$. Recall the transpotence $\psi_p$ defined in Definition \ref{transpotence}. For an arbitrary integer $l$, we have a similar operation $\psi_{l}:{_{l}\Pi}\rightarrow H_{2}(K(\Pi,1);\mathbb{Z}/l)$ where ${_{l}\Pi}$ is the subgroup of $\Pi$ of $l$-torsion elements. $\psi_{l}$ satisfies the following condition.

\begin{proposition}\label{psil}
 Let $\delta_{l}:H_{2}(K(\Pi,1);\mathbb{Z}/l)\rightarrow H_{1}(K(\Pi,1);\mathbb{Z})$ be the Bockstein homomorphism and $\sigma:{_{l}\Pi} \rightarrow H_{1}(K(\Pi,1);\mathbb{Z})$ be the suspension. Then $\sigma=\delta_{l}\psi_{l}$.
\end{proposition}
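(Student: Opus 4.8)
The plan is to reduce the identity to an explicit chain-level computation in the bar model, using the observation that the $1$-cycle defining $\psi_l$ already lifts to an integral cycle. First I would recall the two sides. By Definition \ref{suspension} and Example \ref{suspensionex}, for $g\in{}_l\Pi$ the suspension $\sigma(g)$ is represented by the $1$-chain $[g]\in\overline{\mathscr{B}}(\mathbb{Z}[\Pi])_1$, which is a cycle there since the bar differential kills the coefficient term and $d(g)=0$. On the other side, $\psi_l(g)$ is built like the transpotence of Definition \ref{transpotence} with the prime replaced by the integer $l$: in $\mathscr{B}(\mathbb{Z}/l[\Pi])$ one has $d[g]=g-1$, and writing $N_g:=1+g+\cdots+g^{l-1}$ for the norm element, the chain $\zeta:=N_g[g]$ is a $1$-cycle because $N_g(g-1)=g^l-1=0$. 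Choosing $\eta\in\mathscr{B}(\mathbb{Z}/l[\Pi])_2$ with $d\eta=\zeta$, the class $\psi_l(g)$ is that of the image $\bar\eta$ of $\eta$ in $\overline{\mathscr{B}}(\mathbb{Z}/l[\Pi])_2$; note $\bar\eta$ is a cycle because the image of $\zeta$ in $\overline{\mathscr{B}}$ is $\epsilon(N_g)[g]=l[g]=0$ over $\mathbb{Z}/l$. (This matches the characteristic-$l$ transpotence, since $N_g=(g-1)^{l-1}$ whenever $l$ is prime and $g^l=1$; in the write-up I would simply invoke whatever form the definition of $\psi_l$ takes and use only $N_g(g-1)=g^l-1=0$ and $\epsilon(N_g)=l$.)

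Next I would exploit the integral lift. Since $g\in{}_l\Pi$, the relation $g^l=1$ holds already in $\Pi$, so $\tilde\zeta:=N_g[g]$ is an honest $1$-cycle in $\mathscr{B}(\mathbb{Z}[\Pi])$, which is acyclic; pick $\tilde\eta\in\mathscr{B}(\mathbb{Z}[\Pi])_2$ with $d\tilde\eta=\tilde\zeta$. Reducing mod $l$ turns $\tilde\eta$ into an admissible choice of $\eta$, so the image of $\tilde\eta$ in $\overline{\mathscr{B}}(\mathbb{Z}[\Pi])_2$ is an integral lift of a representative of $\psi_l(g)$. Its boundary is the image of $\tilde\zeta$ in $\overline{\mathscr{B}}(\mathbb{Z}[\Pi])_1$, which is $\epsilon(N_g)[g]=l\,[g]$. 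By the standard chain-level description of the homology Bockstein $\delta_l$ attached to $0\to\mathbb{Z}\xrightarrow{l}\mathbb{Z}\to\mathbb{Z}/l\to 0$ — lift mod $l$, apply $d$, divide by $l$ — this reads $\delta_l(\psi_l(g))=[[g]]=\sigma(g)$, and since $g\in{}_l\Pi$ is arbitrary, $\delta_l\psi_l=\sigma$.

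I expect the main obstacle to be bookkeeping rather than anything conceptual: pinning down the precise chain recipe for $\psi_l$ (the sign conventions in the bar differential, the exact shape of the norm chain, and independence of the class from the choice of $\tilde\eta$), and keeping the two distinct ``bar'' operations apart — the quotient $\mathscr{B}\to\overline{\mathscr{B}}$, under which a degree-$0$ coefficient $a$ of a bar symbol contributes $\epsilon(a)$, versus reduction mod $l$ — along with the fact that they commute, so that either order of passage yields the same class. A minor supporting point is to confirm that the Bockstein in the statement is the connecting homomorphism of the above short exact sequence applied to the free complex $\overline{\mathscr{B}}(\mathbb{Z}[\Pi])$, so its chain description applies verbatim and the only arithmetic input is $\epsilon(N_g)=l$.
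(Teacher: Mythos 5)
Your argument is correct. Note that the paper does not actually prove this proposition---it only refers the reader to section~1 of exp.~11 of \cite{Ca}---so there is no in-paper proof to compare against; your chain-level computation (take the integral $1$-cycle $N_g[g]$ with $N_g=1+g+\cdots+g^{l-1}$, bound it by $\tilde\eta$ in the acyclic complex $\mathscr{B}(\mathbb{Z}[\Pi])$, observe that the image of $\tilde\eta$ in $\overline{\mathscr{B}}(\mathbb{Z}[\Pi])$ has boundary $\epsilon(N_g)[g]=l[g]$, and read off the Bockstein) is the standard one and is essentially the computation in the cited source. The one interpretive step is that the paper never defines $\psi_l$ for composite $l$ (it only says ``a similar operation''), so you are supplying the definition via the norm element; that is the right choice, and your check that it reduces to the characteristic-$p$ transpotence of Definition~\ref{transpotence} when $l=p$ is prime, via $(g-1)^{p-1}\equiv(-1)^{p-1}N_g\equiv N_g$ in $\mathbb{Z}/p[\Pi]$, settles the compatibility. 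The remaining points you flag as bookkeeping---$A$-linearity of the bar differential (automatic here since $\mathbb{Z}[\Pi]$ sits in degree $0$ with zero differential, so $d(N_g[g])=N_g(g-1)=g^l-1=0$), the identification $\overline{\mathscr{B}}(\mathbb{Z}[\Pi])\otimes\mathbb{Z}/l\cong\overline{\mathscr{B}}(\mathbb{Z}/l[\Pi])$ needed to compute $\delta_l$ on the bar complex, and independence of the class of $\bar{\tilde\eta}$ from the choice of bounding chain---all go through as you expect, so there is no gap.
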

For details see section 1, exp. 11 of \cite{Ca} (Cartan and Serre, 1954-1955). In the case of integral cohomology, we extend the definition of an admissible $p$-word as follows.

\begin{definition}\label{wordintegral}
 A $p$-word is a sequence consists of the symbols $\sigma, \varphi_{p}, \gamma_{p}$, and $\psi_{p^{\lambda}}$, for some positive integer $\lambda$. An admissible $p$-word is a word satisfying (2) of Definition \ref{words} except that it can end with $\psi_{p^{\lambda}}$.  $\psi_{p^{\lambda}}$, of height $1$ and degree $2$. The degree of a $p$-word is therefore given as in (1) of \ref{words}. Notice we do not make the exception when $p=2$.
\end{definition}

In what follows we abuse notations to let words denote elements of a DGA rather then homology classes, as we did earlier. Let $\Pi=\prod_{k}\Pi_{k}$ be the decomposition of $\Pi$ into cyclic groups of order infinity or a power of a prime, and let $w_{k}$ be a generator of $\Pi_{k}$. Also we recall the the decompositions $\Pi/(p\Pi)=\prod_{i}\Pi'_{i}$ and ${_{p}\Pi}=\prod_{j}\Pi''_{j}$ as well as the generators $\{w'_{i}\}, \{w''_{j}\}$ as in Definition \ref{U(M)p} and Theorem \ref{fundamental}. We let $E(a;k)$ or $P(;k)$ denote exterior algebras or divided power algebras over $\mathbb{Z}$ generated by a single element $a$ of degree $k$, suppressing the ring of coefficients, and consider them as graded $\mathbb{Z}$-algebras. We fix a positive integer $n$, and construct a collection of DGA's.
\begin{construction}\label{construction}
\begin{enumerate}
\item
 For each $w_{k}$ of order infinity, take the DGA $E(\sigma^{n}(w_k);n)$ with trivial differential when $n$ is odd, or $P(\sigma^{n}(w_k);n)$ when $n$ is even. We denote this DGA by $A(n)_{0}$.
\item
 For each $w_{k}$ of order $p^{\lambda}$ for some prime $p$ and positive integer $\lambda$, If $n$ is odd take $E(\sigma^{n}(w_k);n)\otimes P(\sigma^{n-1}(\psi_{p^{\lambda}})(w_k); n+1)$, or $P(\sigma^{n}(w_k),n)\otimes E(\sigma^{n-1}(\psi_{p^{\lambda}})(w_k), n+1)$. In either case we define differential  $$d(\sigma^{n-1}\psi_{p^{\lambda}})(w_k))=(-1)^{n-1}p^{\lambda}\sigma^{n}(k), d(\sigma^{n}(w_k))=0$$
 when $n$ is even.
\item
 Let $\alpha'$ be an admissible $p$-word of height $n-l-1$ and degree $q$. Consider the pair of $p$-words $\sigma^{l}\varphi_{p}\alpha'$ and $\sigma^{l+1}\gamma_{p}\alpha'$. If $n$ is odd, for each $w'_{i}$ take $E(\sigma^{l+1}\gamma_{p}\alpha'(w'_i); pq+l+1)\otimes P(\sigma^{l}\varphi_{p}\alpha'(w'_i); pq+l+2)$. If $n$ is even, for each $w'_{i}$ take $P(\sigma^{l+1}\gamma_{p}\alpha'(w'_i); pq+l+1)\otimes E(\sigma^{l}\varphi_{p}\alpha'(w'_i),pq+l+2)$.
 In both cases we take differential $$d(\sigma^{l}\varphi_{p}\alpha'(w'_i))=(-1)^{n-1}p(\sigma^{l+1}\gamma_{p}\alpha'(w'_i)), d(\sigma^{l+1}\gamma_{p}\alpha'(w'_i))=0.$$
 \item
 Let $\alpha', \sigma^{l}\varphi_{p}\alpha'$ and $\sigma^{l+1}\gamma_{p}\alpha'$ be as above. If $n$ is odd, for each $w''_{j}$ take $E(\sigma^{l+1}\gamma_{p}\alpha'(w''_j); pq+l+1)\otimes P(\sigma^{l}\varphi_{p}\alpha'(w''_j); pq+l+2)$. If $n$ is even, for each $w''_{j}$ take $P(\sigma^{l+1}\gamma_{p}\alpha'(w''_j),pq+l+1)\otimes E(\sigma^{l}\varphi_{p}\alpha'(w''_j); pq+l+2)$.
 In both cases we take differential $$d(\sigma^{l}\varphi_{p}\alpha'(w''_j))=(-1)^{n-1}p(\sigma^{l+1}\gamma_{p}\alpha'(w''_j)),\  d(\sigma^{l+1}\gamma_{p}\alpha'(w''_j))=0.$$
\end{enumerate}
 We take all the DGA's constructed in (2), (3), (4) and denote their tensor product by $A(n)_{p}$. Finally we take $A(n)=A(n)_{0}\otimes_{\mathbb{Z}}\bigotimes A(n)_{p}$. It is easily seem that there is a homomorphism of divided power algebra $f: H_{*}(A(n))\rightarrow H_{*}(K(\Pi,n);\mathbb{Z})$ taking the class represented by the word $\alpha(k)$ (resp. $\alpha(i),\alpha(j)$ to the homology class given by the operations $\alpha(w_{k})$, (resp. $\alpha(w'_{i}, \alpha(w''_{j})$. The following theorem from \cite{Ca} (Cartan and Serre, 1954-1955) is stated in a more modern form than the original.
\end{construction}
\begin{theorem}[H. Cartan, Theorem 1, exp. 11, \cite{Ca}, Cartan and Serre, 1954-1955]\label{integral}
 $f: H_{*}(A(n))\rightarrow H_{*}(K(\Pi,n);\mathbb{Z})$ is an epimorphism. For any prime $p$, the restriction $f:H_{*}(A(n)_{0}\otimes A(n)_{p})\rightarrow H_{*}(K(\Pi,n);\mathbb{Z})$ is a $p$-local isomorphism.
\end{theorem}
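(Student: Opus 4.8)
\smallskip

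The plan is to follow Cartan's strategy from exp.\ 11 of \cite{Ca}. First I would reduce to the case of cyclic $\Pi$: since $K(\Pi\times\Pi',n)\simeq K(\Pi,n)\times K(\Pi',n)$ and $A(n)$ was defined as the tensor product of the DGA's attached to the cyclic summands of $\Pi$, both $H_{*}(A(n))$ and $H_{*}(K(\Pi,n);\mathbb{Z})$ split by the K{\"u}nneth theorem in a way compatible with $f$, and every Tor-term in the K{\"u}nneth formula between a $p$-torsion group and a $p'$-torsion group with $p\neq p'$ vanishes. So it suffices to treat $\Pi$ cyclic of infinite order or of order $p^{\lambda}$. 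Moreover it suffices to prove the $p$-local isomorphism statement, since the global epimorphism then follows formally: $H_{*}(K(\Pi,n);\mathbb{Z})$ is degreewise finitely generated, and a homomorphism of finitely generated abelian groups that is surjective after localization at every prime is surjective.

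Next, fix a prime $p$ and consider the sub-DGA $A(n)_{0}\otimes A(n)_{p}$. The heart of the argument is to exhibit, by induction on $n$, a multiplicative construction $(A(n)_{0}\otimes A(n)_{p},\, A(n+1)_{0}\otimes A(n+1)_{p},\, L)$ with $L=(A(n)_{0}\otimes A(n)_{p})\otimes_{\mathbb{Z}}(A(n+1)_{0}\otimes A(n+1)_{p})$, which is acyclic after inverting all primes $\neq p$, and whose differential is dictated by the suspension, the transpotence, and the operation $\psi_{p^{\lambda}}$; here Proposition \ref{psil} is used to pin down the integral (Bockstein) behaviour of $\psi_{p^{\lambda}}$. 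The elementary building blocks are the two-generator pieces $E(b)\otimes P(a)$ or $P(b)\otimes E(a)$ with $d(a)=\pm p^{e}b$ (the exponent $e$ being $\lambda$ for the $\psi_{p^{\lambda}}$-piece and $1$ for the $\varphi_{p}$-pieces) together with $d(\gamma_{k}(a))=\gamma_{k-1}(a)\,d(a)$; each is acyclic over $\mathbb{Z}_{(p)}$, and a K{\"u}nneth argument shows $L$ is too. I would then apply Theorem \ref{Ca1} and Theorem \ref{Ca2} over $\mathbb{Z}_{(p)}$ to compare this construction with the bar-construction triple $(\overline{\mathscr{B}}^{n}(\mathbb{Z}_{(p)}[\Pi]),\overline{\mathscr{B}}^{n+1}(\mathbb{Z}_{(p)}[\Pi]),\mathscr{B}^{n+1}(\mathbb{Z}_{(p)}[\Pi]))$, obtaining
\[ H_{*}(A(n{+}1)_{0}\otimes A(n{+}1)_{p})\otimes\mathbb{Z}_{(p)}\;\cong\;H_{*}(\overline{\mathscr{B}}^{n+1}(\mathbb{Z}[\Pi]))\otimes\mathbb{Z}_{(p)}\;\cong\;H_{*}(K(\Pi,n{+}1);\mathbb{Z})\otimes\mathbb{Z}_{(p)}, \]
with the isomorphism realized by $f$. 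The base case $n=1$ is the classical computation of $H_{*}(K(\Pi,1);\mathbb{Z})$ for cyclic $\Pi$, matched against $A(1)_{0}\otimes A(1)_{p}$.

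To license this inductive step one needs the mod-$p$ and rational comparisons as input. Reducing $A(n)$ modulo $p$ and splitting off the summands carrying trivial (or $\times p^{\lambda\geq 2}$) differentials leaves exactly the elementary acyclic $\mathbb{Z}/p$-DGA's of Lemma \ref{eledga}, so $H_{*}(A(n);\mathbb{Z}/p)$ is identified with $U(M^{(n)})$ (resp.\ $S(M^{(n)})$ for $p=2$) and $f\otimes\mathbb{Z}/p$ with the isomorphism $\tilde f^{(n)}$ of Theorem \ref{fundamental} (resp.\ Theorem \ref{fundamental2}); rationally $f\otimes\mathbb{Q}$ is visibly an isomorphism onto the appropriate free exterior/divided-power algebra. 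Since the integral differentials of $A(n)_{0}\otimes A(n)_{p}$ were chosen precisely to realize the higher Bocksteins of $K(\Pi,n)$, a comparison of Bockstein spectral sequences upgrades the mod-$p$ and rational isomorphisms to a $\mathbb{Z}_{(p)}$-isomorphism on $A(n)_{0}\otimes A(n)_{p}$, which is what is needed.

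The main obstacle is the prime $2$: the transpotence $\varphi$ is not additive there (see the remark following the sketch of Theorem \ref{fundamental}), so the odd-primary construction cannot be copied verbatim — the $\mathbb{Z}/2$ model must be built from $\gamma_{2}$ together with the operations $\psi_{2^{\lambda}}$, and verifying that the elementary pieces stay acyclic and that $f$ is still a mod-$2$ isomorphism (Theorem \ref{fundamental2}), hence after the Bockstein comparison a $2$-local isomorphism on $A(n)_{0}\otimes A(n)_{2}$, is the delicate point. A secondary issue, explaining why one only gets an epimorphism globally rather than an isomorphism, is that $A(n)$ is assembled prime-by-prime: although cross-prime products vanish in $H_{*}(A(n))$, in the presence of torsion of order $\geq p^{2}$ the integral homology of $K(\Pi,n)$ is only reconstructed up to the indeterminacy inherent in the transpotence, so the sharp statement has to be phrased as a global epimorphism plus a $p$-local isomorphism on the canonical sub-DGA $A(n)_{0}\otimes A(n)_{p}$.
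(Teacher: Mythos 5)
A preliminary caveat: the paper itself offers no proof of this statement. Theorem \ref{integral} is imported verbatim from exp.~11 of \cite{Ca}, and the only in-paper material against which your proposal can be measured is the sketch of the mod-$p$ analogue (Theorem \ref{fundamental}) and the worked special case of Section~2, where the acyclic construction $M(3)$ is built for $\Pi=\mathbb{Z}$, $n=2\to 3$. Your overall architecture --- reduction to cyclic $\Pi$ by K\"unneth, feeding in the mod-$p$ isomorphisms of Theorems \ref{fundamental} and \ref{fundamental2} together with the rational computation, and deducing the global epimorphism from the $p$-local statements by degreewise finite generation --- is consistent with that strategy and with Cartan's.

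The genuine gap is in your ``heart of the argument.'' You claim that the acyclic object $L$ in the multiplicative construction decomposes into two-generator pieces $E(b)\otimes P(a)$ or $P(b)\otimes E(a)$ with $d(a)=\pm p^{e}b$, ``each acyclic over $\mathbb{Z}_{(p)}$.'' This is false: such a piece has $H_{\deg b}\cong\mathbb{Z}/p^{e}\neq 0$ (and, via $d(\gamma_{k}(a))=\pm p^{e}\gamma_{k-1}(a)b$, a copy of $\mathbb{Z}/p^{e}$ in infinitely many higher degrees). Differentials of the form $d(a)=\pm p^{e}b$ are the \emph{internal} differentials of $A(n+1)_{p}$ itself, as listed in items (2)--(4) of the construction preceding the theorem; their homology is exactly the torsion of $H_{*}(K(\Pi,n+1);\mathbb{Z})$ that one is trying to produce, so they cannot simultaneously serve as the differentials of an acyclic $L$. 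Making $L$ acyclic is precisely where the work lies: the generators of $A(n+1)$ must transgress onto divided powers in $A(n)$, and the condition $d^{2}=0$ then forces correction terms whose existence depends on the arithmetic of Lemma \ref{Lucas} and Lemma \ref{gcd} --- compare Definition \ref{defM(3)}, the remark following it, and the proof of Theorem \ref{M(3)}, where even the single case $A(2)\otimes A(3)$ requires the coefficients $\Lambda_{i}^{p,k+1}$ and a mod-$p$ spectral sequence comparison with the three-generator truncated pieces of Lemma \ref{eledga}, not a two-generator K\"unneth argument. As written, your acyclicity claim for $L$ does not get off the ground. The cleanest repair is to drop the integral acyclic construction entirely and commit to the route you sketch in your last paragraph: Theorems \ref{Ca1} and \ref{Ca2} produce a chain map $f\colon A(n)_{0}\otimes A(n)_{p}\to\overline{\mathscr{B}}^{n}(\mathbb{Z}[\Pi])$ between degreewise finitely generated free complexes; once one checks that $f\otimes\mathbb{Z}/p$ realizes the isomorphism $\tilde f^{(n)}$ and that $f\otimes\mathbb{Q}$ is an isomorphism, the mapping cone has vanishing homology with $\mathbb{Q}$ and $\mathbb{Z}/p$ coefficients and hence with $\mathbb{Z}_{(p)}$ coefficients --- no Bockstein spectral sequence and no integral acyclicity are needed. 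You should also verify, rather than assert, that $H_{*}(A(n);\mathbb{Z}/p)$ really coincides with $U(M^{(n)})$ (resp.\ $S(M^{(n)})$ at $p=2$), since that identification is what makes the mod-$p$ input applicable.
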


\begin{remark}
 The kernel of $f$ is not always trivial. Indeed, when $n=2$ and $\Pi=\mathbb{Z}/2$ with a generator $w_{1}$, we take $x_{k}=\varphi_{2}\gamma_{2^{k}}\psi_{2}(w_1)$ for $k\geq 1$, $y_{k}=\sigma\gamma_{2^{k}}\psi_{2}(w_1)$ for $k\geq 0$, and $z=\sigma^{2}(w_1)$. Consider the DGA over $\mathbb{Z}$
 $$A=\bigotimes_{k\geq 1}P(x_{k}; 2^{k+1}+2)\otimes\bigotimes_{k\geq 0} E(y_{k}; 2^{k+1}+1)\otimes P(z; 2)$$ with $d(x_{k})=-2y_{k}, d(y_{k})=0$ for $k\geq 1$, $d(y_{0})=-2z$ and $d(z)=0$. In particular we have
 $$d(y_{0}\gamma_{2}(z))=-2z\gamma_{2}(z)=-6\gamma_{3}(z),$$
 which implies that the homology class $\gamma_{3}(z)$ is of order $6$, but by Theorem \ref{fundamental}, $H_{*}(K(\mathbb{Z}/2, 2); \mathbb{Z})$ has only $2$-primary elements. Hence $2\gamma_{3}z\neq 0$ is in the kernel of $f$ since it is a $3$-torsion. However, as we see in the Section \ref{DG Alg}, when $\Pi=\mathbb{Z}$ we do have $H_*(A(n))\cong H_{*}(K(\mathbb{Z},n);\mathbb{Z})$.
\end{remark}
\bibliographystyle{abbrv}
\bibliography{refJTA2019}
\end{document}